\documentclass[11pt]{article}
\usepackage[latin1]{inputenc}
\usepackage{amsmath,amsthm,amssymb}
\usepackage{amsfonts}
\usepackage{amsmath}
\allowdisplaybreaks[4]
\usepackage{amsmath,amsthm,amssymb,amscd}
\usepackage{latexsym}
\usepackage{color}
\usepackage{graphicx}
\usepackage{mathrsfs}
\usepackage{cite}
\usepackage[toc,page]{appendix}

\makeatletter \@addtoreset{equation}{section} \makeatother

\newtheorem{theorem}{Theorem}[section]

\newtheorem{proposition}{Proposition}[section]
\newtheorem{lemma}{Lemma}[section]
\newtheorem{remark}{Remark}[section]

\newcommand{\R}{\mathbb{R}}

\begin{document}

\title
{\bf Complete classification of solutions for spin-1 Bose-Einstein condensates in $\mathbb{R}$}

\date{}

\maketitle
\begin{center}
\author{\bf{Xiao Luo}}
\footnote{Email addresses: luoxiao@hfut.edu.cn (Luo).}
\author{\bf {Cunxin Wei}}
\footnote{Email addresses: 2025111344@mail.hfut.edu.cn (Wei).}
\author{\bf{Juncheng Wei}}
\footnote{Email addresses: wei@math.cuhk.edu.hk (Wei).}
\author{\bf{Maoding Zhen}}
\footnote{Email addresses: maodingzhen@163.com (Zhen).}
\end{center}
\begin{center}
\footnotesize {1 School of Mathematics, Hefei University of Technology, Hefei, 230009, P. R. China}\\
\footnotesize {2 School of Mathematics, Hefei University of Technology, Hefei, 230009, P. R. China}\\
\footnotesize {3 Department of Mathematics, Chinese University of Hong Kong, Shatin, NT,
			Hong Kong}\\
\footnotesize {4 School of Mathematics, Hefei University of Technology, Hefei, 230009, P. R. China}\\
\end{center}

\begin{abstract}
{Spin-1 Bose-Einstein condensates provide a fundamental platform for exploring quantum magnetism and coherent spin dynamics. Their three Zeeman components are coupled by spin-exchange interactions under conserved particle number and magnetization, giving rise to rich magnetic phases and nonlinear spin structures; a rigorous classification of stationary states therefore clarifies the possible spin configurations and their physical origins. In this paper, as a continuation of our previous work \cite{LLWZ23,LLWZ24,LLWZ26},  we study the classification of  solution for the spin-1 Bose-Einstein condensates in $\mathbb{R}$, by applying the non-standard Hirota's bilinear method, the determinant formula and the general left-uniqueness strategy,  we present some novel explicit solution and give the complete classification of the solutions. In comparison with the classical
Hirota's method, the non-standard Hirota's method needs to introduce auxiliary functions
to construct Hirota's bilinear form, which essentially adds additional constraint on the wave
functions, so that it possible to deal with more complex equations. As far as we know, this is the first systematic study the complete classification of solutions to the spin-1 Bose-Einstein condensates. Our classification results yields experimentally testable predictions for spin populations, relative phases, and magnetic textures \cite{Higbie,Dan,Chang2025}

}\medskip

\emph{\bf Keywords:}  Classification, Spin-1 Bose-Einstein condensate, Gross-Pitaevskii system.\medskip

\emph{\bf AMS Subject Classification:} 35J50, 35J60, 35Q40.
\end{abstract}

\section{Introduction}
In this paper, we consider the following one-dimensional spin-1 BEC, which can be described by the following coupled Gross-Pitaevskii system

\begin{equation}\label{eq9.4}
\left\{
\begin{aligned}
&u''_1+(\mu+\lambda) u_1+(c_0+c_1)u_1^3+(c_0-c_1)u_1u_{-1}^2+(c_0+c_1)u_0^2u_1+c_1u_{-1}u_0^2=0,\\
&u''_{-1}+(\mu-\lambda) u_{-1}+(c_0+c_1)u_{-1}^3+(c_0-c_1)u_1^2u_{-1}+(c_0+c_1)u_0^2u_{-1}+c_1u_1u_0^2=0,\\
&u''_{0}+\mu  u_0+c_0u_0^3+(c_0+c_1)(u_1^2+u_{-1}^2)u_0+2c_1u_1u_{-1}u_0=0,
\end{aligned}
\right.
\end{equation}
where $\mu+\lambda,\mu-\lambda,\mu \leq 0$. Spinor Bose--Einstein condensates provide a highly controllable platform in
which superfluidity and magnetism coexist at the macroscopic quantum level.
Unlike a scalar condensate, whose order parameter is described by a single
complex field, a spin-$1$ condensate is represented by the three-component
spinor
$
\Psi=(\psi_{1},\psi_{0},\psi_{-1})^{T},
$
corresponding to the Zeeman sublevels $m_F=1,0,-1$ of a fixed hyperfine
manifold. Optical confinement allows these internal spin degrees of freedom
to remain active, leading to magnetic ordering, coherent spin dynamics,
spin domains, topological defects, and multicomponent solitary waves
\cite{Higbie,Dan,Chang2025,Kurn,HOT,IMO}.

At ultralow temperatures, two spin-$1$ bosons interact through the total-spin
channels $F_{\mathrm{tot}}=0$ and $F_{\mathrm{tot}}=2$. If $g_0$ and $g_2$
denote the corresponding interaction strengths, the density and
spin-dependent coupling constants are
$
c_0=\frac{g_0+2g_2}{3},
\
c_1=\frac{g_2-g_0}{3}.
$
Accordingly, the mean-field interaction energy takes the form
$
E_{\mathrm{int}}
=
\frac{c_0}{2}\int_{\mathbb R^N}n^2\,dx
+
\frac{c_1}{2}\int_{\mathbb R^N}|\mathbf F|^2\,dx,
$
where
$
n=\sum_{j=-1}^{1}|\psi_j|^2,
\
\mathbf F=\Psi^\dagger\mathbf f\Psi
$
are the total density and local spin density, respectively. The sign of the
spin-dependent interaction distinguishes ferromagnetic and polar regimes
and determines the preferred magnetic order of the condensate. Localized stationary states arise as constrained critical points of the
energy under fixed particle number and magnetization. The associated
Lagrange multipliers may be written as a chemical potential $\mu$ and a
spin chemical potential $\lambda$, leading to the phase-matched ansatz
$
\psi_j(x,t)=e^{-i(\mu+j\lambda)t}u_j(x),
\ j=1,0,-1.
$
Consequently, $\lambda$ describes the frequency splitting between the
outer Zeeman components and is conjugate to the conserved magnetization.
Classifying the resulting stationary states determines the possible
spin-resolved density profiles, relative phases, nodal structures, and
magnetic textures compatible with the conservation laws.

Of particular interest is the algebraically distinguished regime
$c_0=c_1$. For real-valued stationary states, introduce the symmetric matrix
\[
U=
\begin{pmatrix}
u_1 & u_0/\sqrt{2}\\
u_0/\sqrt{2} & u_{-1}
\end{pmatrix}.
\]
After a suitable nondimensionalization, the three coupled stationary
equations can be written compactly as
\[
-U''-\mu U
-\frac{\lambda}{2}
\left(\sigma_3U+U\sigma_3\right)
=
2cU^3.
\]
In one spatial dimension, the corresponding time-dependent matrix nonlinear
Schr\"odinger equation possesses an integrable point and admits exact bright
spinor solitons \cite{Ieda2004}. This matrix structure explains the existence
of determinant representations and finite algebraic constructions in the
one-dimensional problem.

Started with the seminal paper\cite{LinWei05}, nonlinear Schr\"{o}dinger system have been a hot topic in the community of nonlinear analysis in the past
fifteen years, owing to their important applications in many physical models, such as in nonlinear optics and in Bose-Einstein condensates for multi-species condensates. In these studies, by using variational methods, Lyapunov-Schmidt reduction methods or bifurcation methods, various theorems, about the existence, multiplicity and qualitative properties of nontrivial solutions of nonlinear Schr\"{o}dinger systems, have been established in the literature under various assumptions.
 Since it seems almost impossible for us to provided a complete list of scholars, we refer the readers only to \cite{TBLJNS,TBNS,BZZ20,CCW11,CCWang11,glwz,glwz1,LWYZ22,LLWZ23,LLWZ24} and the references therein.

For spin-1 BEC, the existence, asymptotic behavior and stability of solutions have been studied by many authors under certain conditions, see \cite{LLWZ23,AP} and the references therein.  Cao, Chern and Wei in \cite{CCW11} proved the existence of ground states for spin-1 BEC  by minimizing the corresponding energy in one-dimensional. Kong, Wang and Zhao \cite{AP} gave the existence and detailed asymptotic behavior of ground states for spin-1 BEC with harmonic trapping potentials in two-dimensional case. Recently, in \cite{LLWZ23,LLWZ24}, we developed an exhaustive analysis on standing waves with prescribed mass of physical states for spin-1 Bose-Einstein condensate in $\mathbb{R}^{3}$ and we give a complete description on ground states  of spin-1 Bose-Einstein condensates  with Ioffe-Pritchard magnetic field in $\mathbb{R}^{2}$ and $\mathbb{R}^{3}$. For numerical results on ground states and excited states of spin-1 BEC, we refer the reader to \cite{Higbie} and the reference therein.

Recently, when $c_{1}=0,$  based on Hirota's bilinearisation method and the associated integral system,  Guo, Luo and Wei \cite{Guo25} give a complete classification on general solutions of cubic nonlinear schr\"{o}dinger system, which
address the questions of [R. Frank, D. Gontier and M. Lewin, CMP, 2021], where the complete
classification and uniqueness results for the system were already proved for the case $n=2.$

However, for the case $c_1\neq0$, the study on the classification of solutions to \eqref{eq9.4} is absent in the literatures. In this situation, the problem becomes more difficult, one main reason is that the system \eqref{eq9.4} no longer possesses symmetry, which making it difficult to derive the associated integral system.

On the other hand, the existing results show that it is usually very useful to know the complete classification  of solutions for the system \eqref{eq9.4}, which unfortunately seem very
challenging for general situations. In addition, the standard Hirota's bilinearisation method is no longer applicable to spinor BEC.  Stimulated by these facts, the main purpose of this paper is to address the complete classification of solutions for the system \eqref{eq9.4}.

To state our main results, we first introduce some notations.  Let \(\mathbf{a} = (a_1, a_0, a_{-1}) \in \mathbb{R}^3\) be arbitrary and $$\eta_{1}=\sqrt{|\mu+\lambda|},\ \eta_{-1}=\sqrt{|\mu-\lambda|},\ \eta_{0}=\sqrt{|\mu|}.$$ Define
\begin{equation}\label{M1}
C = \begin{pmatrix}
\frac{1}{2\eta_1} & \frac{1}{\eta_1 + \eta_0} & 0 & 0 \\
\frac{1}{\eta_1 + \eta_0} & \frac{1}{2\eta_0} & 0 & 0 \\
0 & 0 & \frac{1}{2\eta_0} & \frac{1}{\eta_0 + \eta_{-1}} \\
0 & 0 & \frac{1}{\eta_0 + \eta_{-1}} & \frac{1}{2\eta_{-1}}
\end{pmatrix},
\end{equation}
as well as
\begin{equation}\label{M2}
\mathcal{A}(x) = \begin{pmatrix}
a_1 e^{\eta_1 x} & 0 & 0 & 0 \\
0 & 0 & \frac{a_0}{\sqrt{2}} e^{\eta_0 x} & 0 \\
0 & \frac{a_0}{\sqrt{2}} e^{\eta_0 x} & 0 & 0 \\
0 & 0 & 0 & a_{-1} e^{\eta_{-1} x}
\end{pmatrix}, \
R = \begin{pmatrix} 1 & 1 & 0 & 0 \\ 0 & 0 & 1 & 1 \end{pmatrix}.
\end{equation}
Here \(R\) is a fixed \(2 \times 4\) matrix, not an independent variable or an interval endpoint. Denote
\begin{equation}\label{M3}
M(x) = I_4 + c\mathcal{A}(x)C\mathcal{A}(x)C.
\end{equation}After multiplying term by term, the specific matrix of \eqref{M3} is
\begin{equation}\label{T2}
M(x) = \begin{pmatrix}
1 + \frac{ca_1^2 e^{2\eta_1 x}}{4\eta_1^2} & \frac{ca_1^2 e^{2\eta_1 x}}{2\eta_1(\eta_1 + \eta_0)} & \frac{ca_0 a_1 e^{(\eta_0+\eta_1)x}}{2\sqrt{2}\eta_0(\eta_0 + \eta_1)} & \frac{ca_0 a_1 e^{(\eta_0+\eta_1)x}}{\sqrt{2}(\eta_0 + \eta_{-1})(\eta_0 + \eta_1)} \\
\frac{ca_0^2 e^{2\eta_0 x}}{4\eta_0(\eta_0 + \eta_1)} & 1 + \frac{ca_0^2 e^{2\eta_0 x}}{8\eta_0^2} & \frac{ca_0 a_{-1} e^{(\eta_0+\eta_{-1})x}}{\sqrt{2}(\eta_0 + \eta_{-1})^2} & \frac{ca_0 a_{-1} e^{(\eta_0+\eta_{-1})x}}{2\sqrt{2}\eta_{-1}(\eta_0 + \eta_{-1})} \\
\frac{ca_0 a_1 e^{(\eta_0+\eta_1)x}}{2\sqrt{2}\eta_1(\eta_0 + \eta_1)} & \frac{ca_0 a_1 e^{(\eta_0+\eta_1)x}}{\sqrt{2}(\eta_0 + \eta_1)^2} & 1 + \frac{ca_0^2 e^{2\eta_0 x}}{8\eta_0^2} & \frac{ca_0^2 e^{2\eta_0 x}}{4\eta_0(\eta_0 + \eta_{-1})} \\
\frac{ca_0 a_{-1} e^{(\eta_0+\eta_{-1})x}}{\sqrt{2}(\eta_0 + \eta_{-1})(\eta_0 + \eta_1)} & \frac{ca_0 a_{-1} e^{(\eta_0+\eta_{-1})x}}{2\sqrt{2}\eta_0(\eta_0 + \eta_{-1})} & \frac{ca_{-1}^2 e^{2\eta_{-1} x}}{2\eta_{-1}(\eta_0 + \eta_{-1})} & 1 + \frac{ca_{-1}^2 e^{2\eta_{-1} x}}{4\eta_{-1}^2}
\end{pmatrix}.
\end{equation}
Our main results are the following:
\begin{theorem}[Complete  Classification]\label{The1}
Let $c_{1}=c_{0}=c>0,$ suppose $\mathbf{u}=(u_{1},u_{0},u_{-1})$ is a solution of \eqref{eq9.4} in $(H^1(\mathbb{R})\setminus \{0\})^{3}$, then there exists a unique vector $\mathbf{a}=(a_{1},a_{0},a_{-1})$ such that
\begin{equation}\label{T1}
U_\mathbf{a}(x) = R M(x)^{-1} \mathcal{A}(x) R^T, \quad
U_\mathbf{a}(x) = \begin{pmatrix} u_1 & u_0 / \sqrt{2} \\ u_0 / \sqrt{2} & u_{-1} \end{pmatrix}.
\end{equation}
 Each set of real parameters yields a smooth \(H^1\) solution on the entire real line, and its parameters are given by
\begin{equation}\label{H1}
a_i = \lim_{x \to -\infty} e^{-\eta_i x} u_i(x), \quad i = 1, 0, -1.
\end{equation}
  In particularly, if $\lambda=0,$ then $\eta=\sqrt{-\mu}$ and there exists a unique vector $\mathbf{a}=(a_{1},a_{0},a_{-1})$ such that
\begin{equation}\label{N4}
U_\mathbf{a} = e^{\eta x} A \left( I_2 + \frac{c}{4\eta^2} e^{2\eta x} A^2 \right)^{-1},\ \text{where} \ \ A = \begin{pmatrix} a_1 & a_0 / \sqrt{2} \\ a_0 / \sqrt{2} & a_{-1} \end{pmatrix},
\end{equation}
and \(I_2\) is the \(2 \times 2\) identity matrix. Each parameter \(A\) yields a globally smooth \(H^1\) solution, and the parameter is uniquely determined by
\begin{equation*}
A = \lim_{x \to -\infty} e^{-\eta x} U_\mathbf{a}.
\end{equation*}
\end{theorem}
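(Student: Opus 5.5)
We will argue in two directions, existence (every $\mathbf a\in\mathbb R^3$ gives a solution) and uniqueness (every $H^1$ solution is of this form, with $\mathbf a$ determined by \eqref{H1}). Throughout we use the matrix form: with $c_0=c_1=c$ the system \eqref{eq9.4} is equivalent to
\[
-U''-\mu U-\tfrac{\lambda}{2}(\sigma_3U+U\sigma_3)=2cU^3
\]
for the symmetric matrix $U$ built from $(u_1,u_0,u_{-1})$. Up to the normalisation of $c$ fixed by the nondimensionalisation, this is the stationary version of the integrable matrix NLS. The linear part acts entrywise: the $(1,1)$, $(1,2)$, $(2,2)$ entries decay like $e^{\eta_1x}$, $e^{\eta_0x}$, $e^{\eta_{-1}x}$ as $x\to-\infty$. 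This is exactly encoded by $\mathcal A(x)$, once $\mathcal A(x)$ is split into blocks acting on the two copies indexed by $R$.

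\textbf{Existence.} We view $U_{\mathbf a}=RM^{-1}\mathcal A R^T$ as a one-soliton Gram/Cauchy determinant formula. The key structural identity is the Sylvester-type relation
\[
\Lambda C+C\Lambda=E, \qquad \Lambda=\mathrm{diag}(\eta_1,\eta_0,\eta_0,\eta_{-1}),
\]
where $E$ is the block matrix of ones; it follows directly from the entries $1/(\eta_i+\eta_j)$ of $C$. Moreover $\mathcal A'=\Lambda\mathcal A=\mathcal A\Lambda$ after the appropriate permutation, which is why the $a_0$ entries are placed off the block diagonal.

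Differentiating $M=I+c\mathcal A C\mathcal A C$ with these rules gives $M'$ as a rank-structured expression in $R^T R$ and $\mathcal A$. The steps are then:
\begin{itemize}
\item[(i)] Expand $(M^{-1})'=-M^{-1}M'M^{-1}$ and compute $U_{\mathbf a}''$.
\item[(ii)] Check that the cubic term $2cU^3$ is reproduced via the identity $R^TR\,\mathcal A\,R^TR$-type contractions.
\item[(iii)] Equivalently, introduce the $\tau$-function $f=\det M$ and numerator functions $g_j$, and verify the non-standard Hirota bilinear system $D_x^2 f\cdot f=\ldots$, $(D_x^2+\mu_j)g_j\cdot f=\ldots$, with an auxiliary function enforcing the constraint that links the $u_0$ component.
\end{itemize}

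Two further facts complete existence.
\begin{itemize}
\item \emph{Regularity.} $C$ is positive definite, being a Cauchy matrix with positive $\eta$'s in each block. Hence $\mathcal A C\mathcal A C$ is similar to $C^{1/2}\mathcal A C\mathcal A C^{1/2}$, which is positive semidefinite, so $\det M\ge1$ and $U_{\mathbf a}$ is smooth.
\item \emph{Decay.} As $x\to-\infty$ we have $M\to I$, so $U_{\mathbf a}\sim R\mathcal AR^T$, which gives \eqref{H1}. As $x\to+\infty$, $M^{-1}\mathcal A$ decays exponentially because $M$ grows quadratically in the exponentials while $\mathcal A$ grows linearly. Hence $U_{\mathbf a}\in H^1$.
\end{itemize}

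\textbf{Uniqueness (left-uniqueness).} Let $\mathbf u$ be an $H^1$ solution. Standard ODE arguments (bootstrapping $H^1\subset L^\infty$, then decay of $u$ and $u'$) show that $u_j\to0$ exponentially. Since the nonlinearity is $O(|u|^3)$, variation of constants on $(-\infty,x]$ gives
\[
u_i(x)=a_ie^{\eta_ix}+O\big(e^{3\underline\eta x}\big),
\]
so the limits $a_i$ in \eqref{H1} exist. Now compare $\mathbf u$ with $U_{\mathbf a}$, which has the same leading asymptotics. Write $w=U-U_{\mathbf a}$. Then $w$ satisfies a linear ODE $w''-\Lambda$-type $w=O(|U|^2+|U_{\mathbf a}|^2)\,w$ and $w=o(e^{\eta_i x})$ as $x\to-\infty$. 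A Gronwall/contraction argument on $(-\infty,x_0]$ in the weighted norm $\sup e^{-\underline\eta x}|w|$ forces $w\equiv0$ near $-\infty$, and ODE uniqueness then gives $w\equiv0$ on $\mathbb R$. This uses that the integral operator
\[
w\mapsto\int_{-\infty}^x \frac{\sinh(\eta_i(x-s))}{\eta_i}\,O(e^{2\underline\eta s})\,w\,ds
\]
is a contraction for $x_0\ll0$, and that the growing mode $e^{\eta_ix}$ is excluded by the $o(e^{\eta_i x})$ condition while the decaying mode $e^{-\eta_ix}$ is excluded by boundedness. Uniqueness of $\mathbf a$ is immediate from \eqref{H1}. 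A nonzero $H^1$ solution must have $\mathbf a\neq0$, by the same contraction argument applied with $U_{\mathbf 0}=0$.

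\textbf{The case $\lambda=0$.} Here all $\eta_i=\eta$, so $C$ has all entries equal to $1/(2\eta)$ blockwise and $\mathcal A C\mathcal AC$ collapses. A direct push-through identity, $R(I+cXY)^{-1}=(I+cRX\ldots)^{-1}R$, reduces $RM^{-1}\mathcal AR^T$ to $e^{\eta x}A(I_2+\tfrac{c}{4\eta^2}e^{2\eta x}A^2)^{-1}$, which is \eqref{N4}. This formula can also be checked directly: diagonalise the symmetric matrix $A=Q\,\mathrm{diag}(\alpha,\beta)\,Q^T$, and each eigenvalue then gives a scalar sech soliton.

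\textbf{Main obstacle.} The hardest step is the verification in (ii)/(iii) that $RM^{-1}\mathcal AR^T$ actually solves the system for general $\lambda$ with $\eta_1,\eta_0,\eta_{-1}$ distinct. The cross terms coupling the $e^{(\eta_0+\eta_{\pm1})x}$ entries must cancel exactly. We would organise this through the Sylvester identity and the bilinear form with an auxiliary function rather than by brute expansion of the explicit $4\times4$ matrix in \eqref{T2}.
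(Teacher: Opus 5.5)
Your existence half follows the paper: the finite Hirota/determinant identification, $\det M=\det(I_4+cH^2)\ge1$ with $H=C^{1/2}\mathcal AC^{1/2}$, and left asymptotics from $M\to I_4$. The uniqueness half, however, has a genuine gap.

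You treat the cubic terms as a generic $O(|u|^3)$ forcing with the single rate $\underline\eta=\min_i\eta_i$. From this you conclude $u_i=a_ie^{\eta_ix}+O(e^{3\underline\eta x})$, and you run the contraction in the uniform norm $\sup e^{-\underline\eta x}|w|$. Both steps require $\eta_i<3\underline\eta$ for every $i$. For $\lambda=0$ all rates coincide and this is harmless. For $\lambda\neq0$ the rates are tied only by $\eta_1^2+\eta_{-1}^2=2\eta_0^2$, so $\eta_{\max}/\eta_{\min}$ is unbounded; e.g.\ $\mu=-1$, $\lambda=0.99$ gives $\eta_1=0.1$, $\eta_0=1$. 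When $\eta_i\ge 3\underline\eta$:
\begin{itemize}
\item the remainder $e^{3\underline\eta x}$ dominates $e^{\eta_i x}$, so no limit $\lim_{x\to-\infty}e^{-\eta_i x}u_i$ is obtained;
\item $\int_{-\infty}^{x} e^{\eta_i(x-s)}e^{3\underline\eta s}\,ds$ diverges, so your Volterra operator is not even bounded on that weighted space.
\end{itemize}
This is not merely a lossy estimate. Take a generic cubic system with the admissible resonance $\eta_0=3\eta_1$ (e.g.\ $\mu=-9$, $\lambda=8$). A monomial $u_1^3$ in the $u_0$-equation would force $u_0\sim x\,e^{\eta_0 x}$, and \eqref{H1} would fail. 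So the classification depends on the structure of this particular nonlinearity, which your argument never uses.

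The missing idea, which is the paper's system-specific step, is to exploit that structure.
\begin{itemize}
\item Every monomial of $F_0$ contains $u_0$, so $u_0''-\eta_0^2u_0=q_0u_0$ with $q_0\in L^1$. The Levinson-type Lemma~\ref{LE7} then gives $u_0=a_0e^{\eta_0 x}+o(e^{\eta_0 x})$.
\item In the side equations, the terms containing $u_{\pm1}$ are again $L^1$ potentials. The only genuine forcing is $-cu_{\mp1}u_0^2=O(e^{2\eta_0 x})$, and $2\eta_0>\eta_{\pm1}$, so Lemma~\ref{LE7} applies again.
\item A bootstrap then gives $F_i(u)=O(e^{(\eta_i+2\eta_*)x})$. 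For the exchange monomials this needs $2\eta_0\ge\eta_1+\eta_{-1}$, which follows from the spectral relation.
\item The contraction must use componentwise weights $Z(x)=\max_j e^{-\eta_j x}|w_j(x)|$. It also needs a monomial-by-monomial check that $|F_i(u)-F_i(v)|\le Ce^{(\eta_i+2\eta_*)x}Z(x)$; e.g.\ $u_{-1}u_0^2$ in the $u_1$-equation has rate $\eta_{-1}+2\eta_0\ge\eta_1+2\eta_{-1}$.
\end{itemize}
With these, $W_b\le C(4\eta_*^2)^{-1}e^{2\eta_* b}W_b$ closes with no ratio hypothesis.

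A lesser point: your right-end heuristic (``$M$ grows quadratically, $\mathcal A$ linearly'') is not valid as stated when some $a_i=0$. Restrict to the active coordinate subspace $E$, where $\mathcal A_E$ is invertible. Then use $U_{\mathbf a}=R_EC_E^{-1/2}L_E(cI_E+L_E^2)^{-1}C_E^{-1/2}R_E^T$ with $L_E=(C_E^{1/2}\mathcal A_EC_E^{1/2})^{-1}=O(e^{-\eta_* x})$.
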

\begin{remark}
The Hirota method used here is nonstandard in a precise structural sense.
Besides writing $u_i=g_i/f$, it introduces the auxiliary function $s$ through
\[
    sf=g_0^2-2g_1g_{-1},
\]
and uses the formal parameter $X$ solely to record homogeneous degree in the
amplitudes. The additional variable $s$ isolates the spin-exchange combination
and closes the coupled problem as five bilinear equations whose coefficients
can be solved recursively. This triangular structure is what makes an exact
finite cascade possible and subsequently reveals its $4\times4$ determinant
encoding. Thus the modified Hirota formulation is not merely a convenient way
to compute an explicit solution: it is the structural bridge between the
original coupled system, the finite algebraic identities, and the determinant
representation. Its role is nevertheless confined to the construction stage;
the exhaustion of all $H^1$ solutions still requires the independent
asymptotic and uniqueness arguments.
\end{remark}
\begin{remark}
The determinant formula is not used merely to exhibit a family of solutions.
The argument also assigns a unique left amplitude triple to every real-valued
$(H^1(\mathbb R))^3$ solution and proves that two solutions with the same triple
must coincide. Thus the construction map $\mathbf{a}\mapsto U_\mathbf{a}$ is shown to be both
surjective and injective. This distinction between constructing explicit
solutions and exhausting the entire solution space is a central feature of the
present work.
The middle component first supplies the decay needed to analyze the two side
components, whose equations contain the inhomogeneous spin-exchange terms
$-cu_{-1}u_0^2$ and $-cu_1u_0^2$. A scalar asymptotic lemma therefore yields a
well-defined amplitude triple even when a side amplitude vanishes. The refined
asymptotics lead to an exact Volterra representation, and a componentwise
weighted contraction estimate controls every exchange term. This is the
system-specific step that adapts the general left-uniqueness strategy to the
present spin-$1$ model without imposing a common profile or an additional
spectral-gap ratio.
\end{remark}
\begin{remark}
The condition $c_0=c_1$ is structural rather than merely technical. In matrix
form, the stationary spin-$1$ BEC system with general coupling constants can be
written as
\[
 U''+\mu U+\frac{\lambda}{2}
 \bigl(\sigma_3U+U\sigma_3\bigr)
 +2c_1U^3
 +(c_0-c_1)\operatorname{tr}(U^2)U=0.
\]
When $c_0=c_1$, the trace-coupling term disappears and the nonlinearity reduces
to the single matrix covariant $U^3$. This reduction is precisely what permits
the closed Hirota bilinear system, its finite truncation, and the associated
$4\times4$ determinant representation.

If $c_0\neq c_1$, the two cubic covariants
$
 U^3
 \ \text{and}\
 \operatorname{tr}(U^2)U
$
are generically independent. Consequently, the second term cannot be absorbed
into $U^3$ by replacing the coupling constant with an effective one. In
particular, if $U_\mathbf{a}$ denotes the determinant solution of the pure matrix-cubic
equation with coefficient $c_1$, then its residual in the general system is
$
 \mathcal R(U_\mathbf{a})
 =(c_0-c_1)\operatorname{tr}(U_\mathbf{a}^2)U_\mathbf{a},
$
which is nonzero for a generic nontrivial three-component solution. At the
bilinear level, the additional trace term generates new nonlinear products that
are not encoded by the original auxiliary function. The finite algebraic
closure is therefore lost, and the Hirota cascade is not expected to terminate
at a fixed order.

The left-asymptotic analysis and the uniqueness of a solution with prescribed
left amplitudes may still remain valid when $c_0\neq c_1$. What fails is the
surjectivity of the construction: an arbitrary amplitude triple need not
produce a global solution decaying at both ends. Instead, the admissible
amplitudes must satisfy an additional right-end matching condition and are
expected, in the nonintegrable case, to form a lower-dimensional subset of the
full amplitude space. Special scalar, fixed-polarization, or rank-one reductions
may still survive because the two cubic covariants become proportional on such
invariant subspaces. Thus, the obstruction for $c_0\neq c_1$ is the loss of the
integrable finite-rank closure, rather than a mere increase in computational
complexity; it does not rule out a classification by different, generally
non-explicit methods.
\end{remark}
\begin{remark}
If \(c_0=c_1=c\leq0\), then \(u_1\equiv u_0\equiv u_{-1}\equiv0\). Indeed, testing the three equations by \(u_1,u_0,u_{-1}\), respectively, summing, and integrating by parts over \(\mathbb R\), we obtain
\begin{equation}
\sum_{j\in\{1,0,-1\}}
\int_{\mathbb R}
\left(|u_j'|^2+\eta_j^2u_j^2\right)\,dx
=
2c\int_{\mathbb R}\operatorname{tr}(U^4)\,dx.
\label{eq:energy-c-nonpositive}
\end{equation}
Since \(U\) is real and symmetric,
\[
\operatorname{tr}(U^4)
=
\operatorname{tr}\!\left((U^2)^TU^2\right)
=
\|U^2\|_F^2
\geq0.
\]
Thus, the left-hand side of \eqref{eq:energy-c-nonpositive} is nonnegative, while the right-hand side is nonpositive. Hence both sides vanish. Since \(\eta_j>0\), we conclude that \(u_j\equiv0\) for \(j\in\{1,0,-1\}\).
\end{remark}
The preceding classification Theorem \ref{The1} treats the spectral parameters
$(\mu,\lambda)$ as prescribed and establishes a one-to-one
parametrization
$
\mathbf{a}=(a_1,a_0,a_{-1})\longmapsto U_\mathbf{a}
$
of all real-valued $(H^1(\mathbb R))^3$ solutions. More precisely, for
each individual solution $\mathbf{u}$, its left asymptotic amplitude
\[
a_i(u)=\lim_{x\to-\infty}e^{-\eta_i x}u_i(x),
\quad i=1,0,-1,
\]
is uniquely determined.

From the physical point of view, however, the natural prescribed
quantities for a spin-$1$ Bose--Einstein condensate are not the
multipliers $(\mu,\lambda)$, but the total particle number
$
\mathcal N(u)
=
\int_{\mathbb R}
\bigl(u_1^2+u_0^2+u_{-1}^2\bigr)\,dx
$
and the longitudinal magnetization
$
\mathcal M(u)
=
\int_{\mathbb R}
\bigl(u_1^2-u_{-1}^2\bigr)\,dx.
$
These two quantities are conserved by the standard spin-exchange
dynamics. In the constrained variational formulation, $\mu$ is the
Lagrange multiplier associated with the conservation of total
particle number, whereas $\lambda$ is the multiplier associated with
the conservation of longitudinal magnetization. Consequently,
$\mu$ and $\lambda$ should no longer be regarded as prescribed input
parameters; instead, they must be recovered from the prescribed
physical data
$
\mathcal N(u)=N,\ \mathcal M(u)=M.
$
The purpose of the following theorem is to determine this
complete inverse image explicitly.
Let $\eta=(\eta_1,\eta_0,\eta_{-1})\in(0,\infty)^3$ and
$a=(a_1,a_0,a_{-1})\in\mathbb R^3$.
Set
\[
r_+:=(1,1,0,0)^T,
\
r_-:=(0,0,1,1)^T,
\]
and
\[
\mathcal D_{\eta,a}(x)
:=
I_4+c\mathcal A(x)
C
\mathcal A(x)
C,
\
F_{\eta,a}(x):=\det\mathcal D_{\eta,a}(x).
\]
Then $F_{\eta,a}(x)\geq1$ for every $x\in\mathbb R$. Define
\begin{align}\label{N3}
 \mathbf{u}_{\eta,a}
=
\bigl(
 \mathbf{u}_{\eta,a}^{(1)},
 \mathbf{u}_{\eta,a}^{(0)},
 \mathbf{u}_{\eta,a}^{(-1)}
\bigr),
\end{align}
where
\[
 \mathbf{u}_{\eta,a}^{(1)}(x)
=
\frac{
r_+^T
\operatorname{adj}\bigl(\mathcal D_{\eta,a}(x)\bigr)
\mathcal A(x)r_+
}{
F_{\eta,a}(x)
},\ \
\mathbf{u}_{\eta,a}^{(0)}(x)
=
\frac{
\sqrt2\,r_+^T
\operatorname{adj}\bigl(\mathcal D_{\eta,a}(x)\bigr)
\mathcal A(x)r_-
}{
F_{\eta,a}(x)
},
\]
and\[
 \mathbf{u}_{\eta,a}^{(-1)}(x)
=
\frac{
r_-^T
\operatorname{adj}\bigl(\mathcal D_{\eta,a}(x)\bigr)
\mathcal A(x)r_-
}{
F_{\eta,a}(x)
}.
\]
Fix $c>0$, $N>0$, and $M\in\mathbb R$. Consider real-valued
solutions
$
\mathbf{u}=(u_1,u_0,u_{-1})\in H^1(\mathbb R)^3
$
of the spin-$1$ Bose-Einstein condensates
subject to
\begin{align}\label{N1}
\mathcal N(u)
:=
\int_{\mathbb R}
\left(u_1^2+u_0^2+u_{-1}^2\right)\,dx=N,\  \mathcal M(u)
:=
\int_{\mathbb R}
\left(u_1^2-u_{-1}^2\right)\,dx=M.
\end{align}
Our main results on this aspect are following:
\begin{theorem}\label{The2}
(i) For every $|M|\leq N$, $\mu=-\frac{c^2N^2}{4},\
\lambda=0,$ then up to translations, \eqref{eq9.4} and \eqref{N1} admits solutions, which satisfy
\begin{equation*}
\begin{aligned}
u_1(x)
 &=\sigma\,\frac{\sqrt c}{4}(N+M)
   \operatorname{sech}\!\left(\frac{cN}{2}(x-x_0)\right),\\
u_0(x)
 &=\sigma\varepsilon\,\frac{\sqrt c}{2\sqrt2}
   \sqrt{N^2-M^2}\,
   \operatorname{sech}\!\left(\frac{cN}{2}(x-x_0)\right),\\
u_{-1}(x)
 &=\sigma\,\frac{\sqrt c}{4}(N-M)
   \operatorname{sech}\!\left(\frac{cN}{2}(x-x_0)\right),
\end{aligned}
\end{equation*}
where $
x_0\in\mathbb R,
\
\sigma,\varepsilon\in\{1,-1\}.
$

(ii) If $M=0,\ \mu=-\frac{c^2N^2}{16},
\
\lambda=0,$ then up to translations, \eqref{eq9.4} and \eqref{N1} admits solutions, which satisfy
\begin{equation*}
\begin{aligned}
u_1=\cos^2\theta\,w_1+\sin^2\theta\,w_2,\
u_0=\sqrt2\sin\theta\cos\theta\,(w_1-w_2),\
u_{-1}=\sin^2\theta\,w_1+\cos^2\theta\,w_2,
\end{aligned}
\end{equation*}
where
$
x_1,x_2,\theta\in\mathbb R,
\
\sigma_1,\sigma_2\in\{1,-1\},
$
and
$
w_j(x)
:=
\sigma_j\frac{\sqrt c\,N}{4}
\operatorname{sech}
\left(\frac{cN}{4}(x-x_j)\right),
\ j=1,2.
$

(iii) If $M=N$, $\mu=-\frac{c^2N^2}{8}-\frac{t^2}{2},
\
\lambda=-\frac{c^2N^2}{8}+\frac{t^2}{2},$  then up to translations, \eqref{eq9.4} and \eqref{N1} admits solutions, which satisfy
\begin{equation*}
 u_1(x)=\sigma_1\frac{\sqrt c\,N}{2}
 \operatorname{sech}\!\left(\frac{cN}{2}(x-x_1)\right),
 \ u_0=u_{-1}=0,
\end{equation*}
where $x_1\in\mathbb R$, $t>0,
\
t\neq\frac{cN}{2}$ and $\sigma_1\in\{1,-1\}$.

(iv) If $M=-N$ and
$
\mu=-\frac{c^2N^2}{8}-\frac{t^2}{2},
\
\lambda=\frac{c^2N^2}{8}-\frac{t^2}{2}
$, then up to translations, \eqref{eq9.4} and \eqref{N1} admits solutions, which satisfy
\begin{equation*}
 u_1=u_0=0,\
 u_{-1}(x)=\sigma_{-1}\frac{\sqrt c\,N}{2}
 \operatorname{sech}\!\left(\frac{cN}{2}(x-x_{-1})\right),
\end{equation*}
where $x_{-1}\in\mathbb R$, $ t>0,
\
t\neq\frac{cN}{2},$ and
$\sigma_{-1}\in\{1,-1\}$.

(v) If $M=0$, $\mu=-\frac{c^2N^2}{16},
\
0<|\lambda|<\frac{c^2N^2}{16},$ then up to translations, \eqref{eq9.4} and \eqref{N1} admits solutions, which satisfy
\begin{equation*}
 u_1=u_{-1}=0,\
 u_0(x)=\sigma_0\frac{\sqrt c\,N}{2\sqrt2}
 \operatorname{sech}\!\left(\frac{cN}{4}(x-x_0)\right),
\end{equation*}
where $x_0\in\mathbb R$ and $\sigma_0\in\{1,-1\}$.

(vi) If
$
0<|M|<N,\
\eta_1=\frac c4(N+M),
\
\eta_0=\frac c4\sqrt{N^2+M^2},
\
\eta_{-1}=\frac c4(N-M), $ and
\[
\mu=-\frac{c^2}{16}(N^2+M^2),
\
\lambda=-\frac{c^2NM}{8},
\]then up to translations, \eqref{eq9.4} and \eqref{N1} admits solutions, which satisfy
\begin{equation*}
\begin{aligned}
u_1(x)&=\sigma_1\frac{\sqrt c}{4}(N+M)
 \operatorname{sech}\!\left(\frac c4(N+M)(x-x_1)\right),\ u_0(x)=0,\\
u_{-1}(x)&=\sigma_{-1}\frac{\sqrt c}{4}(N-M)
 \operatorname{sech}\!\left(\frac c4(N-M)(x-x_{-1})\right),
\end{aligned}
\end{equation*}
where $
x_1,x_{-1}\in\mathbb R,
\
\sigma_1,\sigma_{-1}\in\{1,-1\}.
$

(viii) If
$
0<M<(\sqrt2-1)N,
\
M\neq\frac N3,
$
\begin{align*}
 \eta_1=\frac{cM}{2},\ \eta_0=\frac c4(N-M), \ \eta_{-1}=\frac{c}{2\sqrt2}\sqrt{N^2-2NM-M^2},
\end{align*}
and $\mu=-\frac{c^2}{16}(N-M)^2,\ \lambda=\frac{c^2}{16}(N+M)(N-3M),$
then up to translations, \eqref{eq9.4} and \eqref{N1} admits solutions, which satisfy \eqref{N3}, where $a_1a_0\neq0$ and $a_{-1}=0.$

(vii) If
$
-(\sqrt2-1)N<M<0,
\
M\neq-\frac N3,
$
\begin{align*}
 \eta_1=\frac{c}{2\sqrt2}\sqrt{N^2+2NM-M^2},\ \eta_0=\frac c4(N+M),\ \eta_{-1}=-\frac{cM}{2},
\end{align*}
and $\mu=-\frac{c^2}{16}(N+M)^2,\
 \lambda=-\frac{c^2}{16}(N-M)(N+3M), $
then up to translations, \eqref{eq9.4} and \eqref{N1} admits solutions, which satisfy \eqref{N3}, where $a_0a_{-1}\neq0$ and $a_{1}=0.$

(ix) If
$
0<|M|<(\sqrt2-1)N,
$
\begin{equation*}
\eta_1=\frac{c}{8N}(N^2+2NM-M^2),\ \eta_0=\frac{c}{8N}(N^2+M^2), \eta_{-1}=\frac{c}{8N}(N^2-2NM-M^2),
\end{equation*}
and
\[
\mu
=
-\frac{c^2(N^2+M^2)^2}{64N^2},
\
\lambda
=
-\frac{c^2M(N^2-M^2)}{16N},
\]
then up to translations, \eqref{eq9.4} and \eqref{N1} admits solutions, which satisfy \eqref{N3}, where $a_1a_0a_{-1}\neq0$.
\end{theorem}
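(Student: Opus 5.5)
The plan is to reduce everything to Theorem \ref{The1} and then compute the two constraints $\mathcal N$ and $\mathcal M$ explicitly on each family produced by the determinant formula. By Theorem \ref{The1}, every $H^1$ solution is $U_{\mathbf a}$ for a unique left amplitude $\mathbf a$. Up to translation we may normalise one nonzero amplitude; the only data are then the support pattern of $\mathbf a$ (which $a_i$ vanish) and the spectral parameters $(\eta_1,\eta_0,\eta_{-1})$. The central tool is a trace identity for the mass. Since $U_{\mathbf a}=RM^{-1}\mathcal A R^T$, we have $\operatorname{tr}(U^2)=u_1^2+u_0^2+u_{-1}^2$, and integrable matrix NLS theory suggests
\[
\operatorname{tr}(U_{\mathbf a}^2)=\frac{2}{c}\,\frac{d^2}{dx^2}\log F_{\eta,a},
\qquad
u_{\pm1}^2+\tfrac12u_0^2=\frac1c\,\frac{d^2}{dx^2}\log(\text{suitable }2\times2\text{ minor}).
\]
I would verify these from the bilinear equations (the $f$- and $s$-equations of the Hirota cascade) rather than from the full $4\times4$ expansion. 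Integrating over $\mathbb R$ then gives $N$ and $M$ purely from the growth rates of $\log F$ and of the minors as $x\to\pm\infty$. These rates are sums of the $\eta_i$ over the "active" exponentials. For example, the leading term of $F$ is $e^{2(\eta_1+\eta_0+\eta_0+\eta_{-1})x}$ when all $a_i\neq0$, so $\mathcal N=\frac{2}{c}\cdot 2(\eta_1+2\eta_0+\eta_{-1})/2$ up to normalisation.

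Next I would split into cases by support. If $\mathbf a=(a_1,0,0)$ or $(0,0,a_{-1})$, the solution is scalar sech and gives (iii)/(iv), with $\eta_{\mp}$ and the other multiplier free; that freedom is the parameter $t$. If $\mathbf a=(0,a_0,0)$, the only constraint on the side components is $\eta_{\pm1}>0$, which gives (v). If $a_0=0$ and $a_1a_{-1}\neq0$, $M$ decouples into two scalar sech solitons, which gives (vi). When $\lambda=0$, formula \eqref{N4} applies: diagonalising $A=Q\,\mathrm{diag}(\alpha_1,\alpha_2)Q^T$ with a rotation $Q$ by angle $\theta$ yields $U=Q\,\mathrm{diag}(w_1,w_2)Q^T$ with scalar solitons $w_j$. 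Rank one ($\det A=0$) gives (i), where $M$ is read off from the eigenvector. Rank two with equal masses forces $\eta=cN/4$ and $M=0$ by the $\cos^2-\sin^2$ symmetry, which gives (ii). The remaining cases are $\lambda\neq0$ with $a_1a_0\neq0$, $a_{-1}=0$ (and symmetrically), and the fully generic case. For each, I would compute the leading exponents of $F$ and of the minors at $\pm\infty$. These depend on which off-diagonal entries of $M(x)$ survive and involve factors like $(\eta_1-\eta_0)^2/(\eta_1+\eta_0)^2$ whose vanishing changes the exponent. Setting $\mathcal N=N$ and $\mathcal M=M$ and using $\eta_1^2-\eta_0^2=-\lambda$, $\eta_{-1}^2-\eta_0^2=\lambda$, I would solve for the $\eta$'s. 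This should produce the formulas in (vii), (viii) and (ix). Positivity of $\eta_{\mp1}$ yields the restriction $|M|<(\sqrt2-1)N$, and the exclusions $M\neq\pm N/3$ correspond to $\lambda=0$, which is already covered by (i)–(ii).

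The main obstacle is the trace identity and the right-end asymptotics of the minors of $\mathcal D_{\eta,a}$ in the mixed cases. The identity is needed to show that $\mathcal N$ and $\mathcal M$ depend only on the $\eta$'s and the support, not on the values of the $a_i$. The asymptotics are needed to determine which exponentials dominate as $x\to+\infty$, and degenerate coefficient cancellations there must be tracked carefully. I would first try to establish the identity through the matrix form $\operatorname{tr}(U^2)=\frac{2}{c}(\log\det M)''$, differentiating $M=I+c\mathcal A C\mathcal A C$ and using $\mathcal A'=\operatorname{diag}(\eta)\mathcal A$ together with the Sylvester-type relation $\operatorname{diag}(\eta)C+C\operatorname{diag}(\eta)=R^TR$-like rank structure. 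Only if that fails would I fall back on direct expansion of the determinant.
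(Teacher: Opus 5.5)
Your treatment of $\lambda=0$ (diagonalize $A$, rank one versus rank two) and of the decoupled strata (iii)--(vi) is sound. So is your route to $\mathcal N$. With $c_0+c_1=2c$, the fourth equation of \eqref{eq9.1d} reads $ff''-(f')^2=c\sum_i g_i^2$, so $\operatorname{tr}(U_{\mathbf a}^2)=\frac1c(\log F)''$; the constant is $\frac1c$, not $\frac2c$. On the active space, $F=c^{\dim E}(\det C_E)^2(\det\mathcal A_E)^2\det\bigl(I_E+c^{-1}(C_E^{-1}\mathcal A_E^{-1})^2\bigr)$, and its top coefficient at $+\infty$ is nonzero because $\det C_E>0$. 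So there are no cancellations to track when $\lambda\neq0$.

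The gap is $\mathcal M$ on the coupled strata (vii)--(ix). Your identity $u_{\pm1}^2+\frac12u_0^2=\frac1c(\log\tau_\pm)''$, for a ``suitable $2\times2$ minor'' $\tau_\pm$, is asserted rather than derived, and it fails for the natural candidates. Take $a_{-1}=0\neq a_1a_0$ and put $p=a_1e^{\eta_1x}$, $q=a_0e^{\eta_0x}/\sqrt2$, $\alpha=\frac1{2\eta_1}$, $\beta=\frac1{\eta_1+\eta_0}$, $\gamma=\frac1{2\eta_0}$. The principal block minors of $\mathcal D_{\eta,a}$ are $\tau_+=1+c\alpha^2p^2+c\gamma^2q^2+c^2\alpha\gamma(\alpha\gamma-\beta^2)p^2q^2$ and $\tau_-=1+c\gamma^2q^2$. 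The left expansion, which you can read off directly from the equations, gives
\begin{gather*}
u_1^2+\tfrac12u_0^2=p^2+q^2-2c\alpha^2p^4-2c\gamma^2q^4-c(4\beta\gamma+2\alpha\beta)\,p^2q^2+\cdots,\\
\tfrac12u_0^2+u_{-1}^2=q^2-2c\gamma^2q^4-2c\alpha\beta\,p^2q^2+\cdots.
\end{gather*}
The $a_1^2a_0^2$ term in the second density cannot come from $\frac1c(\log\tau_-)''$, which involves $a_0$ only. In the first density, the $p^2q^2$ coefficient of $\frac1c(\log\tau_+)''$ is $-4c\alpha\gamma$, which differs from the actual one by $2c\beta\gamma\neq0$. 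Only the integrals agree. The individual diagonal densities are $x$-derivatives of ratios of bordered determinants to $F$, not second log-derivatives of determinants; only their sum is.

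The missing step is to keep the matrix instead of its trace, and the tools you name already do this. Let $K=\operatorname{diag}(\eta_1,\eta_0,\eta_0,\eta_{-1})$; then $\mathcal A_E'=K_E\mathcal A_E=\mathcal A_EK_E$ and $K_EC_E+C_EK_E=R_E^TR_E$. Set $P_E=(\mathcal A_E^{-1}C_E^{-1}\mathcal A_E^{-1}+cC_E)^{-1}$ and $T_E=(\mathcal A_E^{-1}+cC_E\mathcal A_EC_E)^{-1}$, so that $U_{\mathbf a}=R_ET_ER_E^T$. These relations give $P_E'=T_ER_E^TR_ET_E$, i.e.\ the matrix identity $U_{\mathbf a}^2=\frac{d}{dx}(R_EP_ER_E^T)$.

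Since $P_E\to0$ at $-\infty$ and $P_E\to\frac1cC_E^{-1}$ at $+\infty$, you get $\int_{\mathbb R}U_{\mathbf a}^2\,dx=\frac1cR_EC_E^{-1}R_E^T$. For the $2\times2$ Gram block $C(\xi,\zeta)$ with entries $(2\xi)^{-1},(\xi+\zeta)^{-1},(2\zeta)^{-1}$, one has $(1,1)\,C(\xi,\zeta)^{-1}(1,1)^T=2(\xi+\zeta)$. This yields both diagonal entries, $\frac2c(\eta_1\varepsilon_1+\eta_0\varepsilon_0)$ and $\frac2c(\eta_0\varepsilon_0+\eta_{-1}\varepsilon_{-1})$ with $\varepsilon_i=\mathbf{1}_{\{a_i\neq0\}}$, and hence $N$ and $M$ together. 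Your $\log F$ formula is the trace of this identity, since $(\log F)'=c\operatorname{tr}(R_EP_ER_E^T)$. With $N$ and $M$ in this form, your support-pattern algebra for (vii)--(ix) goes through as planned, including the range $|M|<(\sqrt2-1)N$ and the exclusions $M\neq\pm N/3$.
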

The constrained problem is therefore an inverse problem based on the
preceding unconstrained classification: for fixed $(c,N,M)$, determine
all triples $(\mu,\lambda,\mathbf{a})$ such that the explicitly classified
solution $U_\mathbf{a}$ satisfies
$
\mathcal N(U_\mathbf{a})=N,
\
\mathcal M(U_\mathbf{a})=M.
$
Although the amplitude $\mathbf{a}$ remains unique for each particular
solution $U_\mathbf{a}$, the constraint map
$
(\mu,\lambda,\mathbf{a})
\longmapsto
\bigl(\mathcal N(U_\mathbf{a}),\mathcal M(U_\mathbf{a})\bigr)
$
is generally not injective. Indeed, the two integral constraints are
insensitive to translations and sign changes and, on several
solution strata, depend only on the decay rates and on which
asymptotic amplitudes are nonzero. Hence a fixed pair $(N,M)$ may
correspond to several branches or even continuous families of
solutions.

\begin{remark}
\label{rem:physical-admissibility}
The condition $|M|\leq N$ is not an additional technical
assumption, but a necessary consequence of the definitions of the
total particle number and the longitudinal magnetization. Indeed,
$
N+M
=
\int_{\mathbb R}
\left(2u_1^2+u_0^2\right)\,dx\geq0,
$
and
$
N-M
=
\int_{\mathbb R}
\left(u_0^2+2u_{-1}^2\right)\,dx\geq0.
$
Hence $-N\leq M\leq N$. At the two endpoints, this argument gives
the stronger conclusions
$
M=N
\Rightarrow
u_0\equiv u_{-1}\equiv0,
$
and
$
M=-N
\Rightarrow
u_1\equiv u_0\equiv0.
$
Thus the boundary values $M=\pm N$ describe fully magnetized states
supported entirely in the $m_F=\pm1$ Zeeman component.
\end{remark}

\begin{remark}
\label{rem:exact-constraint-inversion}
The first essential feature of the Theorem \ref{The2} is that it converts the
physical constraints into a finite-dimensional inverse problem. In
the noncollision regime, the exact identities
$$
N
=
\frac{2}{c}
\left(
\eta_1\varepsilon_1
+2\eta_0\varepsilon_0
+\eta_{-1}\varepsilon_{-1}
\right),
$$
and
$$
M
=
\frac{2}{c}
\left(
\eta_1\varepsilon_1
-\eta_{-1}\varepsilon_{-1}
\right),
\
\varepsilon_i:=\mathbf 1_{\{a_i\neq0\}},
$$
show that the total particle number and magnetization depend only on
the decay rates and the active asymptotic channels, but not on the
magnitudes or signs of the nonzero amplitudes. Consequently, the unconstrained parametrization
$
(\mu,\lambda,\mathbf{a})\longmapsto U_\mathbf{a}
$
can be inverted at the level of the physical data $(N,M)$ by solving
a finite algebraic system. The resulting constraint map is naturally
set-valued: each individual solution still has a unique asymptotic
amplitude, but a fixed pair $(N,M)$ may correspond to several
different branches or continuous families of solutions.
\end{remark}
\begin{remark}
\label{rem:rank-support-stratification}
A second essential feature is the different geometric mechanism
governing the collision and noncollision regimes. When $\lambda=0$,
the three decay rates coincide, and the asymptotic data must be
described by a symmetric $2\times2$ amplitude matrix. The constrained
classification is then determined by the rank and eigendirections of
this matrix, producing distinct rank-one and rank-two families.

When $\lambda\neq0$, the decay rates are distinct and the
classification is instead determined by the nonzero pattern
$
(\varepsilon_1,\varepsilon_0,\varepsilon_{-1})
\in\{0,1\}^3\setminus\{(0,0,0)\}.
$
The seven possible support patterns give the three one-amplitude,
three two-amplitude, and one three-amplitude strata. Thus the complete
classification requires the combination of two genuinely different
mechanisms.
In particular, the collision families cannot be obtained by simply
setting $\lambda=0$ in the distinct-rate formulas.
\end{remark}

\begin{remark}
\label{rem:magnetization-phase-diagram}
The complete branch structure is governed by the dimensionless
magnetization ratio
$
m:=\frac{M}{N}\in[-1,1].
$
While the rank-one collision family exists throughout the entire
physical interval $|m|\leq1$, the genuinely coupled noncollision
families occur only in the smaller regime
$
0<|m|<\sqrt2-1.
$
The threshold $\sqrt2-1$ is intrinsic: it is exactly the value at
which one of the reconstructed decay rates ceases to be positive.
The exceptional values
$
m=\pm\frac13
$
correspond to a spectral collision $\lambda=0$ and must therefore be
removed from the appropriate two-amplitude noncollision branches.

Hence the Theorem \ref{The2} provides a complete magnetization-dependent phase
diagram rather than a finite list of isolated normalized states.
Different stationary branches may coexist for the same physical data
$(c,N,M)$ while possessing different multipliers, decay rates, and
internal structures. Selecting the physically preferred branch
requires a further constrained-energy or stability analysis and is
not part of the classification theorem itself.
\end{remark}
Finally,  we outline the proof of Theorems \ref{The1} and Theorem \ref{The2}. The argument follows the the non-standard Hirota's bilinear method, the determinant formula and the general left-uniqueness strategy.

\paragraph{Step 1: Bilinearization and finite truncation.}
Set $u_i=g_i/f$ and introduce $s$ through
\begin{equation*}
    sf=g_0^2-2g_1g_{-1}.
    \label{eq:outline-bilinear-substitution}
\end{equation*}
The system then becomes five bilinear equations for
$g_1,g_0,g_{-1},f$, and $s$. A formal parameter $X$ records the degree in the
amplitudes:
\begin{equation*}
    g_i(X)=\sum_{m\ge0}g_{i,2m+1}X^{2m+1},\qquad
    f(X)=1+\sum_{m\ge1}f_{2m}X^{2m},\qquad
    s(X)=\sum_{m\ge1}s_{2m}X^{2m}.
    \label{eq:outline-formal-series}
\end{equation*}
At each order, the new coefficients are determined by lower-order terms and the
prescribed normalization at $x=-\infty$. The series terminate at
\begin{equation*}
    g_i=g_{i,1}+g_{i,3}+g_{i,5}+g_{i,7},\qquad
    f=1+f_2+f_4+f_6+f_8,\qquad
    s=s_2+s_4+s_6.
    \label{eq:outline-finite-truncation}
\end{equation*}
This truncation is exact: all possible residual coefficients vanish through
$X^{15}$ in the numerator equations, $X^{16}$ in the denominator equation,
and $X^{14}$ in the algebraic constraint.

\paragraph{Step 2: Determinant representation.}
The finite series can be written as
\begin{equation*}
    f=\det M,\qquad
    G=R\operatorname{adj}(M)AR^T,\qquad
    M=I_4+cACAC,
    \label{eq:outline-determinant-form}
\end{equation*}
where, $\operatorname{adj}(M)$ denotes the adjugate of $M $ and
\begin{equation*}
    G=
    \begin{pmatrix}
        g_1 & g_0/\sqrt{2}\\
        g_0/\sqrt{2} & g_{-1}
    \end{pmatrix}.
    \label{eq:outline-G}
\end{equation*}
The $4\times4$ matrix $M$ is an auxiliary finite-rank object, not a fourth
physical component. Adjugate and bordered-determinant identities recover the
three numerators and $s$, including the relation
\begin{equation*}
    sf=g_0^2-2g_1g_{-1}.
    \label{eq:outline-sf}
\end{equation*}

\paragraph{Step 3: Nonsingularity and decay.}
Although $M$ is generally nonsymmetric, the Gram matrix $C$ is positive
definite. Hence
\begin{equation*}
    H=C^{1/2}AC^{1/2}
    \label{eq:outline-H}
\end{equation*}
is symmetric and
\begin{equation*}
    C^{1/2}MC^{-1/2}=I_4+cH^2,\qquad
    f=\prod_{j=1}^{4}(1+c\rho_j^2)\ge1,
    \label{eq:outline-positive-denominator}
\end{equation*}
where $\rho_j\in\mathbb R$ are the eigenvalues of $H$. Thus $f$ never
vanishes. If some amplitudes are zero, the right-end analysis is restricted to
their nonzero coordinate subspace $E$, where $A_E$ is invertible; no inverse of
the possibly singular matrix $A$ is used. It follows that
\begin{equation*}
    U_\mathbf{a}(x),\ U_\mathbf{a}'(x)=O(e^{-\eta_*x})
    \qquad (x\to+\infty),
    \label{eq:outline-right-decay}
\end{equation*}
where $\eta_*:=\min_i\eta_i$. The explicit left asymptotics then show that each
$a\in\mathbb R^3$ yields a globally smooth, real-valued
$H^1(\mathbb R)^3$ solution.

\paragraph{Step 4: Recovery of the left amplitudes.}
Let $\mathbf{u}$ be any real-valued $(H^1(\mathbb R))^3$ solution. A scalar asymptotic
lemma for
\begin{equation*}
    y''-\eta^2y=q(x)y+h(x),\quad q\in L^1,
    \label{eq:outline-scalar-asymptotic-equation}
\end{equation*}
is applied first to the homogeneous equation for $u_0$ and then to the side
equations, whose forcing terms are
\begin{equation*}
    h_1=-cu_{-1}u_0^2,\quad h_{-1}=-cu_1u_0^2.
    \label{eq:outline-side-forcing}
\end{equation*}
This gives the unique amplitudes
\begin{equation*}
    a_i=\lim_{x\to-\infty}e^{-\eta_i x}u_i(x),
    \quad i\in\{1,0,-1\},
    \label{eq:outline-left-amplitude}
\end{equation*}
and the refined estimates
\begin{equation*}
    u_i=a_i e^{\eta_i x}+O\!\left(e^{(\eta_i+2\eta_*)x}\right),\qquad
    u_i'=\eta_i a_i e^{\eta_i x}
        +O\!\left(e^{(\eta_i+2\eta_*)x}\right)
    \quad (x\to-\infty).
    \label{eq:outline-improved-left-asymptotics}
\end{equation*}
No division by $a_i$ is required, so zero leading amplitudes are included.

\paragraph{Step 5: Uniqueness from the left amplitudes.}
The refined estimates yield the Volterra formula
\begin{equation*}
    u_i(x)=a_i e^{\eta_i x}
    +\frac{1}{\eta_i}\int_{-\infty}^{x}
        \sinh\!\bigl(\eta_i(x-t)\bigr)F_i(u(t))\,dt.
    \label{eq:outline-volterra}
\end{equation*}
For two solutions $u$ and $v$ with the same amplitudes, define
\begin{equation*}
    Z(x)=\max_{i\in\{1,0,-1\}}e^{-\eta_i x}|u_i(x)-v_i(x)|.
    \label{eq:outline-weighted-difference}
\end{equation*}
The cubic terms satisfy
\begin{equation*}
    |F_i(u(x))-F_i(v(x))|
    \le C e^{(\eta_i+2\eta_*)x}Z(x),
    \quad x\le b_0.
    \label{eq:outline-nonlinear-difference}
\end{equation*}
Therefore, for
$W_b:=\sup_{x\le b}Z(x)$,
\begin{equation*}
    W_b\le \frac{C}{4\eta_*^2}e^{2\eta_*b}W_b.
    \label{eq:outline-left-contraction}
\end{equation*}
For sufficiently negative $b$, this implies $W_b=0$. The solutions coincide on
a left half-line and hence, by ODE uniqueness, on all of $\mathbb R$.

\paragraph{Step 6: Classification.}
For an arbitrary $H^1$ solution $u$, let $\mathbf{a}(u)$ be the amplitude triple from
Step~4. The explicit solution $U_{\mathbf{a}(u)}$ has the same left amplitudes, so Step~5
gives
\begin{equation*}
    u=U_{\mathbf{a}(u)}\quad\text{on }\mathbb R.
    \label{eq:outline-matching}
\end{equation*}
Conversely, every $\mathbf{a}\in\mathbb R^3$ produces a solution of \eqref{eq:outline-left-amplitude}. Thus
$\mathbf{a}\mapsto U_\mathbf{a}$ is a bijection from $\mathbb R^3$ onto the set of real-valued
$(H^1(\mathbb R))^3$ solutions.

We divide the proof of Theorem \ref{The2} into necessity, algebraic inversion, and
sufficiency.

First, every function satisfying the two constraints necessarily
obeys
$
N+M
=
\int_{\mathbb R}
\left(2u_1^2+u_0^2\right)\,dx\geq0,
$
and
$
N-M
=
\int_{\mathbb R}
\left(u_0^2+2u_{-1}^2\right)\,dx\geq0.
$
Hence $|M|\leq N$, and no constrained solution can exist when
$|M|>N$.

We next apply the unconstrained classification. For fixed
$(c,\mu,\lambda)$, every real-valued $H^1(\mathbb R)^3$ solution is
represented by a unique asymptotic amplitude. The cases
$\lambda=0$ and $\lambda\neq0$ must be treated separately because
they correspond, respectively, to coincident and distinct decay
rates.

Suppose first that $\lambda=0$. Then
$
\eta_1=\eta_0=\eta_{-1}=:\eta,
$
and the solution is parametrized by a symmetric $2\times2$ amplitude
matrix $A$. If
$
A=O\operatorname{diag}(\alpha_1,\alpha_2)O^T,
$
the exact mass-matrix identity is
\[
\int_{\mathbb R}U_A(x)^2\,dx
=
\frac{2\eta}{c}\,
O
\operatorname{diag}
\bigl(
\mathbf 1_{\{\alpha_1\neq0\}},
\mathbf 1_{\{\alpha_2\neq0\}}
\bigr)
O^T.
\]
When $\operatorname{rank}A=1$, the two constraints give
$
\eta=\frac{cN}{2},
\
\mu=-\frac{c^2N^2}{4},
$
and determine the eigendirection through
$
v_1^2=\frac{N+M}{2N},
\
v_2^2=\frac{N-M}{2N}.
$
Substitution into the scalar $\operatorname{sech}$ profile yields
the rank-one formula stated in the Theorem \ref{The2}. When $\operatorname{rank}A=2$, the mass matrix is
$
\frac{2\eta}{c}I_2.
$
Consequently,
\[
M=0,
\
\eta=\frac{cN}{4},
\
\mu=-\frac{c^2N^2}{16}.
\]
Diagonalizing $A$ and writing its two scalar eigenprofiles as
$w_1,w_2$ gives the rank-two family.

Now suppose that $\lambda\neq0$. Introduce
$
\eta_1=\sqrt{-\mu-\lambda},
\
\eta_0=\sqrt{-\mu},
\
\eta_{-1}=\sqrt{-\mu+\lambda}.
$
These rates are positive and satisfy
$
\eta_1^2+\eta_{-1}^2=2\eta_0^2.
$
For the determinant solution $U_\mathbf{a}$, let
$
\varepsilon_i:=\mathbf 1_{\{a_i\neq0\}},
\ i=1,0,-1.
$
The exact mass identities give
$
N
=
\frac{2}{c}
\left(
\eta_1\varepsilon_1+
2\eta_0\varepsilon_0+
\eta_{-1}\varepsilon_{-1}
\right),
$
and
$
M
=
\frac{2}{c}
\left(
\eta_1\varepsilon_1-
\eta_{-1}\varepsilon_{-1}
\right).
$
Thus the constrained problem reduces to the algebraic system
\[
\eta_1\varepsilon_1+
2\eta_0\varepsilon_0+
\eta_{-1}\varepsilon_{-1}
=\frac{cN}{2},
\]
\[
\eta_1\varepsilon_1-
\eta_{-1}\varepsilon_{-1}
=\frac{cM}{2},\ \eta_1^2+\eta_{-1}^2=2\eta_0^2.
\]
There are exactly seven nonempty support patterns
$
(\varepsilon_1,\varepsilon_0,\varepsilon_{-1})
\in
\{0,1\}^3\setminus\{(0,0,0)\}.
$
Solving the preceding three equations for each support pattern gives
the three one-amplitude, three two-amplitude, and one
three-amplitude families listed in the Theorem \ref{The2}. The stated
conditions on $M/N$ are precisely the conditions ensuring
$
\eta_1,\eta_0,\eta_{-1}>0.
$
The exceptional values $M=\pm N/3$ in the mixed two-amplitude
families give $\lambda=0$ and are therefore removed from the
noncollision classification.

For the one-amplitude and two-outer-amplitude cases, the active
equations reduce to scalar equations of the form
$$
y''-\eta^2y+2cy^3=0,\ \text{or}\ y''-\eta^2y+cy^3=0.
$$
Their nonzero $H^1(\mathbb R)$ solutions are, respectively,
$$
y(x)
=
\sigma\frac{\eta}{\sqrt c}
\operatorname{sech}\bigl(\eta(x-x_*)\bigr)\ \text{and}\ y(x)
=
\sigma\frac{\sqrt2\,\eta}{\sqrt c}
\operatorname{sech}\bigl(\eta(x-x_*)\bigr).
$$
Substituting the reconstructed decay rates gives the explicit scalar
profiles in the Theorem \ref{The2}.

For the remaining coupled families, substitution of the reconstructed
rates into the determinant formula gives
$
U_a
=
R\mathcal D_{\eta,a}^{-1}
\mathcal AR^T.
$
Since
$
\det\mathcal D_{\eta,a}(x)\geq1,
$
these profiles are globally smooth and have no singularities. The
adjugate identity
$
\mathcal D_{\eta,a}^{-1}
=
\frac{
\operatorname{adj}(\mathcal D_{\eta,a})
}{
\det\mathcal D_{\eta,a}
}
$
then gives the three componentwise formulas for
$u_1,u_0,u_{-1}$.

Finally, the exact mass identities verify that every listed profile
satisfies the prescribed values of $N$ and $M$. Conversely, the
unconstrained classification assigns a unique asymptotic amplitude
to every $H^1$ solution, while the rank decomposition for
$\lambda=0$ and the seven support patterns for $\lambda\neq0$
exhaust all possible amplitudes. Hence no additional constrained
families can occur.

The paper is organized as follows. In Section \ref{S1}, we introduce some preliminary results. In Section \ref{S2}, we introduce the Cascade functions
and determinant representation. In section \ref{S3}, we prove Theorem \ref{The1}. Finally, Theorem \ref{The2} will be proved in section \ref{S4}.

\section{Preliminaries  }\label{S1}
In this section, we give some preliminaries which are useful for the rest of the paper. We first write out the complete scalar proof used in Lemma 3.1 of \cite{Guo26}, and then check item by item whether the self-spin system satisfies its assumptions, in order to avoid mistakenly applying the non-zero amplitude conclusions unique to that reference to our system.
\begin{lemma} \label{LE7}[Lemma 3.1 in \cite{Guo26}]
Let \(\eta > 0\), \(q \in C((-\infty, b)) \cap L^1(-\infty, b)\), \(h \in C((-\infty, b))\), and let there exist \(\sigma > 0\) such that \(h(x) = O(e^{(\eta+\sigma)x})\) at the left end. If \(y \in H^1(-\infty, b) \cap C^2(-\infty, b)\) satisfies
\begin{equation*}
y'' - \eta^2 y = q(x)y + h(x),
\end{equation*}
then there exists a unique \(a \in \mathbb{R}\) such that
\begin{equation*}
y = ae^{\eta x} + o(e^{\eta x}), \quad y' = \eta ae^{\eta x} + o(e^{\eta x}).
\end{equation*}
Only when additionally \(h \equiv 0\), can we further deduce that \(a = 0 \Rightarrow y \equiv 0\).
\end{lemma}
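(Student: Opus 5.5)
The plan is variation of parameters. Write $f:=qy+h$ and treat $y''-\eta^2y=f$ as an inhomogeneous linear equation. Since $y\in H^1(-\infty,b)$, $y$ is bounded and $y(x)\to0$ as $x\to-\infty$. Hence $qy\in L^1(-\infty,b)$. The bound $h=O(e^{(\eta+\sigma)x})$ shows $h\in L^1$ near $-\infty$ as well.

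Fix $x_1<b$. For $x\le x_1$, use the fundamental system $e^{\pm\eta x}$ to obtain
\[
y(x)=\alpha e^{\eta x}+\beta e^{-\eta x}+\frac{1}{2\eta}\int_{x}^{x_1}\bigl(e^{-\eta(x-t)}-e^{\eta(x-t)}\bigr)f(t)\,dt .
\]
Rewrite this as
\[
y(x)=e^{\eta x}\Bigl(\alpha-\tfrac1{2\eta}\int_x^{x_1}e^{-\eta t}f\Bigr)+e^{-\eta x}\Bigl(\beta+\tfrac1{2\eta}\int_x^{x_1}e^{\eta t}f\Bigr).
\]
The coefficient of $e^{-\eta x}$ has a limit as $x\to-\infty$, because $e^{\eta t}f\in L^1$. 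This limit must be zero; otherwise $y$ would blow up like $e^{-\eta x}$, contradicting $y\to0$. So
\[
y(x)=e^{\eta x}A(x)-\frac{e^{-\eta x}}{2\eta}\int_{-\infty}^x e^{\eta t}f(t)\,dt,\qquad A(x)=\alpha-\frac1{2\eta}\int_x^{x_1}e^{-\eta t}f(t)\,dt .
\]
The second term is bounded by $\frac{1}{2\eta}\int_{-\infty}^x|f|$, which tends to $0$. Hence $|y|\le C|A(x)|e^{\eta x}+o(1)$, but this does not yet give decay.

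The main obstacle is a bootstrap showing $y=O(e^{\eta x})$, which makes $e^{-\eta t}f\in L^1$ near $-\infty$. I would set $m(x)=\sup_{t\le x}e^{-\eta t}|y(t)|$. A priori this may be infinite, so I would first work on $[x,x_1]$ with the finite weighted supremum $m_{x}(x_1)=\sup_{x\le t\le x_1}e^{-\eta t}|y(t)|$. Gronwall-type estimates from the formula above should give
\[
e^{-\eta s}|y(s)|\le C+\frac1\eta\int_s^{x_1}|q|\,e^{-\eta t}|y|\,dt+\frac1\eta\int_s^{x_1}e^{-\eta t}|h|\,dt .
\]
Here I would use the standard kernel bound $|e^{-\eta(s-t)}-e^{\eta(s-t)}|e^{-\eta s}\le 2e^{-\eta t}$ for $t\ge s$. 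The $h$-term is bounded because $e^{-\eta t}h=O(e^{\sigma t})$. Gronwall with $q\in L^1$ then gives $e^{-\eta s}|y(s)|\le C'$, uniformly in $s$. With this bound, $e^{-\eta t}f\in L^1$, so $A(x)\to a:=\alpha-\frac1{2\eta}\int_{-\infty}^{x_1}e^{-\eta t}f$.

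Next I would estimate the remainder. We have
\[
e^{-\eta x}\cdot e^{-\eta x}\int_{-\infty}^x e^{\eta t}|f|\le \int_{-\infty}^x e^{-\eta t}|f|\to0,
\]
using $e^{\eta t}\le e^{2\eta x}e^{-\eta t}$ for $t\le x$. Therefore $y=ae^{\eta x}+o(e^{\eta x})$. Differentiating the representation gives $y'=\eta e^{\eta x}A(x)+\frac{e^{-\eta x}}{2}\int_{-\infty}^x e^{\eta t}f$; the boundary terms cancel. The same estimate then yields $y'=\eta ae^{\eta x}+o(e^{\eta x})$. Uniqueness of $a$ is immediate, since $a=\lim e^{-\eta x}y$.

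For the last assertion, assume $h\equiv0$ and $a=0$. Then $y(x)=-\frac{1}{2\eta}\int_{-\infty}^x(e^{\eta(x-t)}-e^{-\eta(x-t)})q(t)y(t)\,dt$. Set $Z(x)=\sup_{t\le x}e^{-\eta t}|y(t)|$, which is finite and tends to $0$. This gives
\[
e^{-\eta x}|y(x)|\le \frac1\eta\int_{-\infty}^x|q|\,e^{-\eta t}|y|\,dt\le \frac{Z(x)}{\eta}\int_{-\infty}^x|q| .
\]
Choosing $x_2$ with $\frac1\eta\int_{-\infty}^{x_2}|q|<1$ forces $Z(x_2)=0$. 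Thus $y\equiv0$ on $(-\infty,x_2]$, and linear ODE uniqueness extends this to $(-\infty,b)$. When $h\not\equiv0$ this step fails, because the forcing can generate a nonzero solution with $a=0$. This is exactly the caveat in the statement that $a=0\Rightarrow y\equiv0$ requires $h\equiv0$.
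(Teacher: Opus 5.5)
Your overall architecture is sound: variation of parameters, killing the $e^{-\eta x}$ mode by boundedness, then passing to the limit in $A(x)$. You also correctly identify the a priori bound $e^{-\eta x}y=O(1)$ as the crux. But the argument you give for that bound does not work, so the proof has a gap at its central point.

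The kernel bound you invoke is false. For $t\ge s$ one has $e^{-\eta s}\,|e^{-\eta(s-t)}-e^{\eta(s-t)}|=e^{\eta t-2\eta s}-e^{-\eta t}$, and its ratio to $e^{-\eta t}$ is $e^{2\eta(t-s)}-1$, which is unbounded. On $[s,x_1]$ this kernel carries exactly the growing mode. In your first representation that mode cancels only after combining with $\beta e^{-\eta s}$, and carrying out that combination is what produces your refined formula $y(x)=\alpha e^{\eta x}-\frac{1}{2\eta}\int_{-\infty}^{x_1}e^{-\eta|x-t|}f(t)\,dt$.

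Weighting the refined formula by $e^{-\eta s}$ gives, besides the integral over $[s,x_1]$, the term $\frac{1}{2\eta}\int_{-\infty}^{s}e^{-2\eta(s-t)}|q(t)|\,e^{-\eta t}|y(t)|\,dt$. This runs over the left half-line, where $e^{-\eta t}|y(t)|$ is not yet known to be finite. The resulting inequality is two-sided rather than of Volterra type, so Gronwall does not apply. If you truncate the supremum to $[X,x_1]$, the leftover tail can only be controlled through $\|y\|_\infty$. That gives something of order $\|y\|_\infty e^{-\eta X}\int_{-\infty}^{X}|q|$, which need not stay bounded as $X\to-\infty$ for a general $q\in L^1$. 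Everything downstream depends on this bound: the convergence of $A(x)$, both $o(e^{\eta x})$ remainders, and the finiteness of $Z$ in your last step.

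Either of two standard devices closes the gap.

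(i) Keep the two-sided formula. Choose $x_1$ so far left that $\frac{1}{2\eta}\int_{-\infty}^{x_1}|q|\le\frac12$ and $|h(t)|\le Ke^{(\eta+\sigma)t}$ for $t\le x_1$, and use the flattened weight $\omega_X(t)=e^{\eta\max(t,X)}$. Since $t\mapsto\max(t,X)$ is $1$-Lipschitz, $e^{-\eta|x-t|}\omega_X(t)\le\omega_X(x)$. Also $\sup_{t\le x_1}|y(t)|/\omega_X(t)\le e^{-\eta X}\|y\|_\infty<\infty$ and $e^{\eta x}\le\omega_X(x)$. Smallness of $q$ (not Gronwall) then absorbs the $q$-term and gives a bound independent of $X$. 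Letting $X\to-\infty$ yields $e^{-\eta x}|y(x)|\le C$ on $(-\infty,x_1]$.

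(ii) Alternatively, build the recessive solution of $u''=(\eta^2+q)u$ from $u_+=e^{\eta x}+\frac1\eta\int_{-\infty}^{x}\sinh(\eta(x-t))q(t)u_+(t)\,dt$. Here the weighted kernel does satisfy $e^{-\eta x}|\sinh(\eta(x-t))|e^{\eta t}\le\frac12$, because now $t\le x$. The Wronskian $W=u_+y'-u_+'y$ satisfies $W'=u_+h$. It tends to $0$ at $-\infty$, since $u_+,u_+'=O(e^{\eta x})$ while $y$ is bounded and $y'-\eta y\to0$ by your refined formula. Hence $(y/u_+)'=W/u_+^2=O(e^{\sigma x})$ on a left half-line where $u_+\neq0$. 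This gives the asymptotics with an $O(e^{(\eta+\sigma)x})$ error, and for $h\equiv0$ it gives $y=au_+$ outright.

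A minor point: with $a=0$ your last formula should read $y=\frac1\eta\int_{-\infty}^x\sinh(\eta(x-t))q(t)y(t)\,dt$, with a plus sign. This does not affect your estimate.
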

Now, we introduce the non-standard Hirota's bilinear method.
Let $$u_{1}=\frac{g_{1}}{f},\ \ \ \ u_{0}=\frac{g_{0}}{f},\ \  u_{-1}=\frac{g_{-1}}{f},$$
then $$u''_{1}=\frac{(g''_{1}f-g_{1}f'')f^{2}-2(g'_{1}f-g_{1}f')ff'}{f^{4}},$$$$u''_{-1}=\frac{(g''_{-1}f-g_{-1}f'')f^{2}-2(g'_{-1}f-g_{-1}f')ff'}{f^{4}},$$
$$u''_{0}=\frac{(g''_{0}f-g_{0}f'')f^{2}-2(g'_{0}f-g_{0}f')ff'}{f^{4}}.$$
Substituting above equalities into system  \eqref{eq9.4}, we obtain
\begin{equation}\label{eq9.1b}
\left\{
\begin{aligned}
&\left[g''_{1}f+g_{1}f''-2g'_{1}f'+(\mu+\lambda)g_{1}f\right]f^{2}\\
&+g_{1}f\left[2(f')^{2}-2f''f+(c_{0}+c_{1})g^{2}_{1}+(c_{0}-c_{1})g^{2}_{-1}+(c_{0}+c_{1})g^{2}_{0}\right]+c_{1}g_{-1}g^{2}_{0}f=0,\\
&\left[g''_{-1}f+g_{-1}f''-2g'_{-1}f'+(\mu-\lambda)g_{-1}f\right]f^{2}\\
&+g_{-1}f\left[2(f')^{2}-2f''f+(c_{0}+c_{1})g^{2}_{-1}+(c_{0}-c_{1})g^{2}_{1}+(c_{0}+c_{1})g^{2}_{0}\right]+c_{1}g_{1}g^{2}_{0}f=0,\\
&\left[g''_{0}f+g_{0}f''-2g'_{0}f'+\mu g_{0}f\right]f^{2}\\
&+g_{0}f\left[2(f')^{2}-2f''f+c_{0}g^{2}_{0}+(c_{0}+c_{1})(g^{2}_{1}+g^{2}_{-1})+2c_{1}g_{1}g_{-1}\right]=0.\\
\end{aligned}
\right.
\end{equation}
Introduce auxiliary functions
$$sf=g^{2}_{0}-2g_{1}g_{-1},$$
then \eqref{eq9.1b} becomes
\begin{equation}\label{eq9.1c}
\left\{
\begin{aligned}
&\left[g''_{1}f+g_{1}f''-2g'_{1}f'+(\mu+\lambda)g_{1}f+c_{1}sg_{-1}\right]f^{2}\\
&+g_{1}f\left[2(f')^{2}-2f''f+(c_{0}+c_{1})g^{2}_{1}+(c_{0}+c_{1})g^{2}_{-1}+(c_{0}+c_{1})g^{2}_{0}\right]=0,\\
&\left[g''_{-1}f+g_{-1}f''-2g'_{-1}f'+(\mu-\lambda)g_{-1}f+c_{1}sg_{1}\right]f^{2}\\
&+g_{-1}f\left[2(f')^{2}-2f''f+(c_{0}+c_{1})g^{2}_{-1}+(c_{0}+c_{1})g^{2}_{1}+(c_{0}+c_{1})g^{2}_{0}\right]=0,\\
&\left[g''_{0}f+g_{0}f''-2g'_{0}f'+\mu g_{0}f-c_{1}sg_{0}\right]f^{2}\\
&+g_{0}f\left[2(f')^{2}-2f''f+(c_{0}+c_{1})(g^{2}_{0}+g^{2}_{1}+g^{2}_{-1})\right]=0,\\
\end{aligned}
\right.
\end{equation}
which implies that
\begin{equation}\label{eq9.1d}
\left\{
\begin{aligned}
&g''_{1}f+g_{1}f''-2g'_{1}f'+(\mu+\lambda)g_{1}f+c_{1}sg_{-1}=0,\\
&g''_{-1}f+g_{-1}f''-2g'_{-1}f'+(\mu-\lambda)g_{-1}f+c_{1}sg_{1}=0,\\
&g''_{0}f+g_{0}f''-2g'_{0}f'+\mu g_{0}f-c_{1}sg_{0}=0,\\
&2(f')^{2}-2f''f+(c_{0}+c_{1})(g^{2}_{0}+g^{2}_{1}+g^{2}_{-1})=0,\\
&sf=g^{2}_{0}-2g_{1}g_{-1}.\\
\end{aligned}
\right.
\end{equation}
We will use the following power series expansions

\begin{align}\label{L1}
&g_{i}=g_{i1}X+g_{i3}X^{3}+g_{i5}X^{5}+g_{i7}X^{7},\ i=-1,0,1\\\nonumber
&f=1+f_{2}X^{2}+f_{4}X^{4}+f_{6}X^{6}+f_{8}X^{8},\\\nonumber
&s=s_{2}X^{2}+s_{4}X^{4}+s_{6}X^{6},
\end{align}
where $X$ is the formal parameter, the complex functions $g_{j1},g_{j3},g_{j5},g_{j7}(j=1,0,-1),$ real functions $f_{2},\ f_{4},\ f_{6},\ f_{8}$ and complex functions $s_{2},\ s_{4},\ s_{6} $ will be determined later.

We then analyze the following cascade of equations in powers of $X$. By some complex calculations, we can obtain that
\begin{align}\label{A1}
g''_{11}+(\mu+\lambda)g_{11}=0,\ g''_{-11}+(\mu-\lambda)g_{-11}=0,\ g''_{01}+\mu g_{01}=0,
\end{align}
\begin{align}\label{A2}
\frac{c_{0}+c_{1}}{2}(g^{2}_{11}+g^{2}_{-11}+g^{2}_{01})-f''_{2}=0,\ s_{2}-g^{2}_{01}+2g_{11}g_{-11}=0,
\end{align}
\begin{equation}\label{A3}
\left\{
\begin{aligned}
&g''_{11}f_{2}+g''_{13}+g_{11}f''_{2}-2g'_{11}f'_{2}+(\mu+\lambda)(g_{11}f_{2}+g_{13})+c_{1}s_{2}g_{-11}=0,\\
&g''_{-11}f_{2}+g''_{-13}+g_{-11}f''_{2}-2g'_{-11}f'_{2}+(\mu-\lambda)(g_{-11}f_{2}+g_{-13})+c_{1}s_{2}g_{11}=0,\\
&g''_{01}f_{2}+g''_{03}+g_{01}f''_{2}-2g'_{01}f'_{2}+\mu(g_{01}f_{2}+g_{03})-c_{1}s_{2}g_{01}=0,
\end{aligned}
\right.
\end{equation}
\begin{equation}\label{A4}
\left\{
\begin{aligned}
&(f'_{2})^{2}-(f''_{2}f_{2}+f''_{4})+(c_{0}+c_{1})(g_{11}g_{13}+g_{-11}g_{-13}+g_{01}g_{03})=0,\\
&s_{2}f_{2}+s_{4}-2g_{01}g_{03}+2(g_{11}g_{-13}+g_{13}g_{-11})=0,
\end{aligned}
\right.
\end{equation}
\begin{equation}\label{A5}
\left\{
\begin{aligned}
&g''_{11}f_{4}+g''_{13}f_{2}+g''_{15}+g_{11}f''_{4}+g_{13}f''_{2}-2g'_{11}f'_{4}-2g'_{13}f'_{2}\\
&+(\mu+\lambda)(g_{11}f_{4}+g_{13}f_{2}+g_{15})+c_{1}(s_{2}g_{-13}+s_{4}g_{-11})=0,\\
&g''_{-11}f_{4}+g''_{-13}f_{2}+g''_{-15}+g_{-11}f''_{4}+g_{-13}f''_{2}-2g'_{-11}f'_{4}-2g'_{-13}f'_{2}\\
&+(\mu-\lambda)(g_{-11}f_{4}+g_{-13}f_{2}+g_{-15})+c_{1}(s_{2}g_{13}+s_{4}g_{11})=0,\\
&g''_{01}f_{4}+g''_{03}f_{2}+g''_{05}+g_{01}f''_{4}+g_{03}f''_{2}-2g'_{01}f'_{4}-2g'_{03}f'_{2}\\
&+\mu(g_{01}f_{4}+g_{03}f_{2}+g_{05})-c_{1}(s_{2}g_{03}+s_{4}g_{01})=0,
\end{aligned}
\right.
\end{equation}
\begin{equation}\label{A6}
\left\{
\begin{aligned}
&2f'_{2}f'_{4}-(f''_{2}f_{4}+f''_{4}f_{2}+f''_{6})+\frac{c_{0}+c_{1}}{2}(2g_{11}g_{15}+g^{2}_{13}+2g_{-11}g_{-15}+g^{2}_{-13}+2g_{01}g_{05}+g^{2}_{03})=0,\\
&s_{2}f_{4}+s_{4}f_{2}+s_{6}-(2g_{01}g_{05}+g^{2}_{03})+2(g_{11}g_{-15}+g_{13}g_{-13}+g_{15}g_{-11})=0,
\end{aligned}
\right.
\end{equation}
\begin{equation}\label{A7}
\left\{
\begin{aligned}
&g''_{11}f_{6}+g''_{13}f_{4}+g''_{15}f_{2}+g''_{17}+g_{11}f''_{6}+g_{13}f''_{4}+g_{15}f''_{2}-2(g'_{11}f'_{6}+g'_{13}f'_{4}+g'_{15}f'_{2})\\
&+(\mu+\lambda)(g_{11}f_{6}+g_{13}f_{4}+g_{15}f_{2}+g_{17})+c_{1}(s_{2}g_{-15}+s_{4}g_{-13}+s_{6}g_{-11})=0,\\
&g''_{-11}f_{6}+g''_{-13}f_{4}+g''_{-15}f_{2}+g''_{-17}+g_{-11}f''_{6}+g_{-13}f''_{4}+g_{-15}f''_{2}-2(g'_{-11}f'_{6}+g'_{-13}f'_{4}+g'_{-15}f'_{2})\\
&+(\mu-\lambda)(g_{-11}f_{6}+g_{-13}f_{4}+g_{-15}f_{2}+g_{-17})+c_{1}(s_{2}g_{15}+s_{4}g_{13}+s_{6}g_{11})=0,\\
&g''_{01}f_{6}+g''_{03}f_{4}+g''_{05}f_{2}+g''_{07}+g_{01}f''_{6}+g_{03}f''_{4}+g_{05}f''_{2}-2(g'_{01}f'_{6}+g'_{03}f'_{4}+g'_{05}f'_{2})\\
&+\mu(g_{01}f_{6}+g_{03}f_{4}+g_{05}f_{2}+g_{07})-c_{1}(s_{2}g_{05}+s_{4}g_{03}+s_{6}g_{01})=0,
\end{aligned}
\right.
\end{equation}
\begin{equation}\label{A8}
\left\{
\begin{aligned}
&2f'_{2}f'_{6}+(f'_{4})^{2}-(f''_{2}f_{6}+f''_{4}f_{4}+f''_{6}f_{2}+f''_{8})\\
&+\frac{c_{0}+c_{1}}{2}(2g_{11}g_{17}+2g_{13}g_{15}+2g_{-11}g_{-17}+2g_{-13}g_{-15}+2g_{01}g_{07}+2g_{03}g_{05})=0,\\
&s_{2}f_{6}+s_{4}f_{4}+s_{6}f_{2}-(2g_{01}g_{07}+2g_{03}g_{05})+2(g_{11}g_{-17}+g_{13}g_{-15}+g_{15}g_{-13}+g_{17}g_{-11})=0,
\end{aligned}
\right.
\end{equation}
\begin{equation}\label{A9}
\left\{
\begin{aligned}
&g''_{11}f_{8}+g''_{13}f_{6}+g''_{15}f_{4}+g''_{17}f_{2}+g_{11}f''_{8}+g_{13}f''_{6}+g_{15}f''_{4}+g_{17}f''_{2}-2(g'_{11}f'_{8}+g'_{13}f'_{6}+g'_{15}f'_{4}+g'_{17}f'_{2}) \\
&+(\mu+\lambda)(g_{11}f_{8}+g_{13}f_{6}+g_{15}f_{4}+g_{17}f_{2})+c_{1}(s_{2}g_{-17}+s_{4}g_{-15}+s_{6}g_{-13})=0,\\
&g''_{-11}f_{8}+g''_{-13}f_{6}+g''_{-15}f_{4}+g''_{-17}f_{2}+g_{-11}f''_{8}+g_{-13}f''_{6}+g_{-15}f''_{4}+g_{-17}f''_{2}\\
&-2(g'_{-11}f'_{8}+g'_{-13}f'_{6}+g'_{-15}f'_{4}+g'_{-17}f'_{2})+(\mu-\lambda)(g_{-11}f_{8}+g_{-13}f_{6}+g_{-15}f_{4}+g_{-17}f_{2}) \\
&+c_{1}(s_{2}g_{17}+s_{4}g_{15}+s_{6}g_{13})=0,\\
&g''_{01}f_{8}+g''_{03}f_{6}+g''_{05}f_{4}+g''_{07}f_{2}+g_{01}f''_{8}+g_{03}f''_{6}+g_{05}f''_{4}+g_{07}f''_{2}-2(g'_{01}f'_{8}+g'_{03}f'_{6}+g'_{05}f'_{4}+g'_{07}f'_{2}) \\
&+\mu(g_{01}f_{8}+g_{03}f_{6}+g_{05}f_{4}+g_{07}f_{2})-c_{1}(s_{2}g_{07}+s_{4}g_{05}+s_{6}g_{03})=0,
\end{aligned}
\right.
\end{equation}
\begin{equation}\label{A10}
\left\{
\begin{aligned}
&2f'_{2}f'_{8}+2f'_{4}f'_{6}-(f''_{2}f_{8}+f''_{4}f_{6}+f''_{6}f_{4}+f''_{8}f_{2})\\
&+\frac{c_{0}+c_{1}}{2}(2g_{13}g_{17}+g^{2}_{15}+2g_{-13}g_{-17}+g^{2}_{-15}+2g_{03}g_{07}+g^{2}_{05})=0,\\
&s_{2}f_{8}+s_{4}f_{6}+s_{6}f_{4}-(2g_{03}g_{07}+g^{2}_{05})+2(g_{13}g_{-17}+g_{15}g_{-15}+g_{17}g_{-13})=0,
\end{aligned}
\right.
\end{equation}
\begin{equation}\label{A11}
\left\{
\begin{aligned}
&g''_{13}f_{8}+g''_{15}f_{6}+g''_{17}f_{4}+g_{13}f''_{8}+g_{15}f''_{6}+g_{17}f''_{4}-2(g'_{13}f'_{8}+g'_{15}f'_{6}+g'_{17}f'_{4})\\
&+(\mu+\lambda)(g_{13}f_{8}+g_{15}f_{6}+g_{17}f_{4})+c_{1}(s_{4}g_{-17}+s_{6}g_{-15})=0,\\
&g''_{-13}f_{8}+g''_{-15}f_{6}+g''_{-17}f_{4}+g_{-13}f''_{8}+g_{-15}f''_{6}+g_{-17}f''_{4}-2(g'_{-13}f'_{8}+g'_{-15}f'_{6}+g'_{-17}f'_{4})\\
&+(\mu-\lambda)(g_{-13}f_{8}+g_{-15}f_{6}+g_{-17}f_{4})+c_{1}(s_{4}g_{17}+s_{6}g_{15})=0,\\
&g''_{03}f_{8}+g''_{05}f_{6}+g''_{07}f_{4}+g_{03}f''_{8}+g_{05}f''_{6}+g_{07}f''_{4}-2(g'_{03}f'_{8}+g'_{05}f'_{6}+g'_{07}f'_{4})\\
&+\mu(g_{03}f_{8}+g_{05}f_{6}+g_{07}f_{4})-c_{1}(s_{4}g_{07}+s_{6}g_{05})=0,
\end{aligned}
\right.
\end{equation}
\begin{equation}\label{A12}
\left\{
\begin{aligned}
&2f'_{4}f'_{8}+(f'_{6})^{2}-(f''_{4}f_{8}+f''_{6}f_{6}+f''_{8}f_{4})+\frac{c_{0}+c_{1}}{2}(2g_{15}g_{17}+2g_{-15}g_{-17}+2g_{05}g_{07})=0,\\
&s_{4}f_{8}+s_{6}f_{6}-2g_{05}g_{07}+2(g_{15}g_{-17}+g_{17}g_{-15})=0,
\end{aligned}
\right.
\end{equation}
\begin{equation}\label{A13}
\left\{
\begin{aligned}
&g''_{15}f_{8}+g''_{17}f_{6}+g_{15}f''_{8}+g_{17}f''_{6}-2(g'_{15}f'_{8}+g'_{17}f'_{6})+(\mu+\lambda)(g_{15}f_{8}+g_{17}f_{6})+c_{1}s_{6}g_{-17}=0,\\
&g''_{-15}f_{8}+g''_{-17}f_{6}+g_{-15}f''_{8}+g_{-17}f''_{6}-2(g'_{-15}f'_{8}+g'_{-17}f'_{6})+(\mu-\lambda)(g_{-15}f_{8}+g_{-17}f_{6})+c_{1}s_{6}g_{17}=0,\\
&g''_{05}f_{8}+g''_{07}f_{6}+g_{05}f''_{8}+g_{07}f''_{6}-2(g'_{05}f'_{8}+g'_{07}f'_{6})+\mu(g_{05}f_{8}+g_{07}f_{6})-c_{1}s_{6}g_{07}=0,\\
\end{aligned}
\right.
\end{equation}
\begin{equation}\label{A14}
\left\{
\begin{aligned}
&2f'_{6}f'_{8}-(f''_{6}f_{8}+f''_{8}f_{6})+\frac{c_{0}+c_{1}}{2}(g^{2}_{17}+g^{2}_{-17}+g^{2}_{07})=0,\\
&s_{6}f_{8}-g^{2}_{07}+2g_{17}g_{-17}=0,
\end{aligned}
\right.
\end{equation}
\begin{equation}\label{A15}
\left\{
\begin{aligned}
&g''_{17}f_{8}+g_{17}f''_{8}-2g'_{17}f'_{8}+(\mu+\lambda)g_{17}f_{8}=0,\\
&g''_{-17}f_{8}+g_{-17}f''_{8}-2g'_{-17}f'_{8}+(\mu-\lambda)g_{-17}f_{8}=0,\\
&g''_{07}f_{8}+g_{07}f''_{8}-2g'_{07}f'_{8}+\mu g_{07}f_{8}=0,
\end{aligned}
\right.
\end{equation}
\begin{equation}\label{A16}
(f'_{8})^{2}-f''_{8}f_{8}=0.
\end{equation}
The following lemma solves the formal expansions of \eqref{L1}.
\begin{lemma}\label{LE1}
The equations \eqref{A1}-\eqref{A16} are solved by
\begin{align}\label{C2}
g_{11}=a_{1}e^{\eta_{1}x},\ g_{-11}=a_{-1}e^{\eta_{-1}x},\ g_{01}=a_{0}e^{\eta_{0}x},
\end{align}
\begin{align}\label{C3}
f_{2}&=c\left(\frac{a^{2}_{1}}{4\eta^{2}_{1}}e^{2\eta_{1}x}+\frac{a^{2}_{-1}}{4\eta^{2}_{-1}}e^{2\eta_{-1}x}+\frac{a^{2}_{0}}{4\eta^{2}_{0}}e^{2\eta_{0}x}\right),\\
s_{2}&=a^{2}_{0}e^{2\eta_{0}x}-2a_{1}a_{-1}e^{(\eta_{1}+\eta_{-1})x},
\end{align}
\begin{align}\label{B1}
g_{13}&= \frac{ca_1 a_0^2 (\eta_1 - \eta_0)}{4\eta_0^2 (\eta_1 + \eta_0)} e^{(\eta_1 + 2\eta_0)x} - \frac{ca_0^2 a_{-1}}{2(\eta_0 + \eta_{-1})^2} e^{(2\eta_0 + \eta_{-1})x} + \frac{ca_1 a_{-1}^2}{4\eta_{-1}^2} e^{(\eta_1 + 2\eta_{-1})x},
\end{align}
\begin{align}\label{B2}
g_{-1,3} = \frac{ca_{-1}a_0^2(\eta_{-1} - \eta_0)}{4\eta_0^2(\eta_{-1} + \eta_0)} e^{(\eta_{-1} + 2\eta_0)x} - \frac{ca_0^2a_1}{2(\eta_0 + \eta_1)^2} e^{(2\eta_0 + \eta_1)x} + \frac{ca_{-1}a_1^2}{4\eta_1^2} e^{(\eta_{-1} + 2\eta_1)x},
\end{align}
\begin{align}\label{B3}
g_{0,3} &= \frac{ca_0 a_1^2 (\eta_0 - \eta_1)}{4\eta_1^2 (\eta_0 + \eta_1)} e^{(\eta_0 + 2\eta_1)x} + \frac{ca_0^3}{8\eta_0^2} e^{3\eta_0 x} - \frac{ca_0 a_1 a_{-1}}{(\eta_0 + \eta_1)(\eta_0 + \eta_{-1})} e^{(\eta_1 + \eta_0 + \eta_{-1})x} \\\nonumber
&+ \frac{ca_0 a_{-1}^2 (\eta_0 - \eta_{-1})}{4\eta_{-1}^2 (\eta_0 + \eta_{-1})} e^{(\eta_0 + 2\eta_{-1})x},
\end{align}
\begin{align}\label{B4}
f_4 =& \frac{c^2 a_0^4}{64\eta_0^4} e^{4\eta_0 x} + \frac{c^2 a_0^2 a_1^2 (\eta_0 - \eta_1)^2}{16\eta_0^2 \eta_1^2 (\eta_0 + \eta_1)^2} e^{2(\eta_0+\eta_1)x}
+ \frac{c^2 a_0^2 a_{-1}^2 (\eta_0 - \eta_{-1})^2}{16\eta_0^2 \eta_{-1}^2 (\eta_0 + \eta_{-1})^2} e^{2(\eta_0+\eta_{-1})x} \\\nonumber
&+ \frac{c^2 a_1^2 a_{-1}^2}{16\eta_1^2 \eta_{-1}^2} e^{2(\eta_1+\eta_{-1})x}
- \frac{c^2 a_0^2 a_1 a_{-1}}{(\eta_0 + \eta_1)^2 (\eta_0 + \eta_{-1})^2} e^{(2\eta_0+\eta_1+\eta_{-1})x},
\end{align}
\begin{align}\label{C1}
s_{4}&= \frac{c a_0^2 a_1^2 (\eta_1 - \eta_0)^2}{4\eta_1^2 (\eta_0 + \eta_1)^2} e^{2(\eta_0+\eta_1)x} + \frac{c a_0^2 a_{-1}^2 (\eta_{-1} - \eta_0)^2}{4\eta_{-1}^2 (\eta_0 + \eta_{-1})^2} e^{2(\eta_0+\eta_{-1})x}+ \frac{c a_0^2 a_1 a_{-1} (\eta_1 - \eta_{-1})^2}{2\eta_0^2 (2\eta_0 + \eta_1 + \eta_{-1})^2} e^{(2\eta_0+\eta_1+\eta_{-1})x},
\end{align}
\begin{align}\label{C4}
g_{1,5} = & \frac{a_0^4 a_1 c^2 (\eta_0 - \eta_1)^2 (2\eta_0 - \eta_1 - \eta_{-1})^2 e^{4\eta_0 x + \eta_1 x}}{64\eta_0^4 (\eta_0 - \eta_{-1})^2 (\eta_1 - \eta_{-1})^2} \\\nonumber
& - \frac{a_0^2 a_1^2 a_{-1} c^2 (2\eta_0 - \eta_1 - \eta_{-1})^2 (2\eta_0^2 - 2\eta_0\eta_1 - 2\eta_0\eta_{-1} + \eta_1^2 + \eta_{-1}^2)^4 e^{2\eta_0 x + 2\eta_1 x + \eta_{-1} x}}{32\eta_0^4 (\eta_0 - \eta_{-1})^2 (\eta_1 - \eta_{-1})^4 (2\eta_0^2 - 4\eta_0\eta_1 + \eta_1^2 + 2\eta_1\eta_{-1} - \eta_{-1}^2)^2} \\\nonumber
& - \frac{a_0^2 a_1 a_{-1}^2 c^2 (\eta_0 - \eta_{-1}) (2\eta_0 - \eta_1 - \eta_{-1})^3 (2\eta_0^2 - 2\eta_0\eta_1 - 2\eta_0\eta_{-1} + \eta_1^2 + \eta_{-1}^2)^2 e^{2\eta_0 x + \eta_1 x + 2\eta_{-1} x}}{16\eta_0^4 (\eta_0 - \eta_1) (\eta_1 - \eta_{-1})^3 (2\eta_0^2 - 4\eta_0\eta_{-1} - \eta_1^2 + 2\eta_1\eta_{-1} + \eta_{-1}^2)^2},
\end{align}
 \begin{align}\label{C5}
g_{-1,5} = & \frac{a_0^4 a_{-1} c^2 (\eta_0 - \eta_{-1})^2 (2\eta_0 - \eta_1 - \eta_{-1})^2 e^{4\eta_0 x + \eta_{-1} x}}{64\eta_0^4 (\eta_0 - \eta_1)^2 (\eta_{-1} - \eta_1)^2} \\\nonumber
& + \frac{a_0^2 a_1^2 a_{-1} c^2 (\eta_0 - \eta_1) (2\eta_0 - \eta_1 - \eta_{-1})^3 (2\eta_0^2 - 2\eta_0\eta_1 - 2\eta_0\eta_{-1} + \eta_1^2 + \eta_{-1}^2)^2 e^{2\eta_0 x + 2\eta_1 x + \eta_{-1} x}}{16\eta_0^4 (\eta_0 - \eta_{-1}) (\eta_1 - \eta_{-1})^3 (2\eta_0^2 - 4\eta_0\eta_1 + \eta_1^2 + 2\eta_1\eta_{-1} - \eta_{-1}^2)^2} \\\nonumber
& - \frac{a_0^2 a_1 a_{-1}^2 c^2 (2\eta_0 - \eta_1 - \eta_{-1})^2 (2\eta_0^2 - 2\eta_0\eta_1 - 2\eta_0\eta_{-1} + \eta_1^2 + \eta_{-1}^2)^4 e^{2\eta_0 x + \eta_1 x + 2\eta_{-1} x}}{32\eta_0^4 (\eta_0 - \eta_1)^2 (\eta_1 - \eta_{-1})^4 (2\eta_0^2 - 4\eta_0\eta_{-1} - \eta_1^2 + 2\eta_1\eta_{-1} + \eta_{-1}^2)^2},
\end{align}
\begin{align}\label{C6}
g_{0,5} = & \frac{a_0^3 a_1^2 c^2 (\eta_0 - \eta_1)^3 (2\eta_0 - \eta_1 - \eta_{-1})^3 (2\eta_0^2 - 2\eta_0\eta_1 - 2\eta_0\eta_{-1} + \eta_1^2 + \eta_{-1}^2)^2 e^{3\eta_0 x + 2\eta_1 x}}{32\eta_0^4 (\eta_0 - \eta_{-1})^3 (\eta_1 - \eta_{-1})^3 (2\eta_0^2 - 4\eta_0\eta_1 + \eta_1^2 + 2\eta_1\eta_{-1} - \eta_{-1}^2)^2} \\\nonumber
& - \frac{a_0^3 a_1 a_{-1} c^2 (2\eta_0 - \eta_1 - \eta_{-1})^2 (2\eta_0^2 - 2\eta_0\eta_1 - 2\eta_0\eta_{-1} + \eta_1^2 + \eta_{-1}^2) e^{3\eta_0 x + \eta_1 x + \eta_{-1} x}}{32\eta_0^4 (\eta_0 - \eta_1)(\eta_0 - \eta_{-1})(\eta_1 - \eta_{-1})^4} \\\nonumber
& - \frac{a_0^3 a_{-1}^2 c^2 (\eta_0 - \eta_{-1})^3 (2\eta_0 - \eta_1 - \eta_{-1})^3 (2\eta_0^2 - 2\eta_0\eta_1 - 2\eta_0\eta_{-1} + \eta_1^2 + \eta_{-1}^2)^2 e^{3\eta_0 x + 2\eta_{-1} x}}{32\eta_0^4 (\eta_0 - \eta_1)^3 (\eta_1 - \eta_{-1})^3 (2\eta_0^2 - 4\eta_0\eta_{-1} - \eta_1^2 + 2\eta_1\eta_{-1} + \eta_{-1}^2)^2} \\\nonumber
& - \frac{a_0 a_1^2 a_{-1}^2 c^2 (2\eta_0 - \eta_1 - \eta_{-1})^2 (2\eta_0^2 - 2\eta_0\eta_1 - 2\eta_0\eta_{-1} + \eta_1^2 + \eta_{-1}^2)^4 e^{\eta_0 x + 2\eta_1 x + 2\eta_{-1} x}}{16\eta_0^4 (\eta_1 - \eta_{-1})^2 (2\eta_0^2 - 4\eta_0\eta_1 + \eta_1^2 + 2\eta_1\eta_{-1} - \eta_{-1}^2)^2 (2\eta_0^2 - 4\eta_0\eta_{-1} - \eta_1^2 + 2\eta_1\eta_{-1} + \eta_{-1}^2)^2},
\end{align}
\begin{align}\label{C7}
f_6 = & \frac{a_0^4 a_1^2 c^3 (\eta_0 - \eta_1)^4 (2\eta_0^2 - 2\eta_0\eta_1 - 2\eta_0\eta_{-1} + \eta_1^2 + \eta_{-1}^2)^2 (2\eta_0 - \eta_1 - \eta_{-1})^4 e^{4\eta_0 x + 2\eta_1 x}}{256\eta_0^6 (\eta_0 - \eta_{-1})^4 (\eta_1 - \eta_{-1})^4 (2\eta_0^2 - 4\eta_0\eta_1 + \eta_1^2 + 2\eta_1\eta_{-1} - \eta_{-1}^2)^2} \\\nonumber
& + \frac{a_0^4 a_{-1}^2 c^3 (\eta_0 - \eta_{-1})^4 (2\eta_0^2 - 2\eta_0\eta_1 - 2\eta_0\eta_{-1} + \eta_1^2 + \eta_{-1}^2)^2 (2\eta_0 - \eta_1 - \eta_{-1})^4 e^{4\eta_0 x + 2\eta_{-1} x}}{256\eta_0^6 (\eta_0 - \eta_1)^4 (\eta_1 - \eta_{-1})^4 (2\eta_0^2 - 4\eta_0\eta_{-1} - \eta_1^2 + 2\eta_1\eta_{-1} + \eta_{-1}^2)^2} \\\nonumber
& + \frac{a_0^2 a_1^2 a_{-1}^2 c^3 (2\eta_0^2 - 2\eta_0\eta_1 - 2\eta_0\eta_{-1} + \eta_1^2 + \eta_{-1}^2)^4 (2\eta_0 - \eta_1 - \eta_{-1})^4 e^{2\eta_0 x + 2\eta_1 x + 2\eta_{-1} x}}{64\eta_0^6 (\eta_1 - \eta_{-1})^4 (2\eta_0^2 - 4\eta_0\eta_1 + \eta_1^2 + 2\eta_1\eta_{-1} - \eta_{-1}^2)^2 (2\eta_0^2 - 4\eta_0\eta_{-1} - \eta_1^2 + 2\eta_1\eta_{-1} + \eta_{-1}^2)^2},
\end{align}
\begin{align}\label{C8}
s_6 = & \frac{a_0^2 a_1^2 a_{-1}^2 c^2 (2\eta_0 - \eta_1 - \eta_{-1})^4 (2\eta_0^2 - 2\eta_0\eta_1 - 2\eta_0\eta_{-1} + \eta_1^2 + \eta_{-1}^2)^4 e^{2\eta_0 x + 2\eta_1 x + 2\eta_{-1} x}}{16\eta_0^4 (\eta_1 - \eta_{-1})^4 (2\eta_0^2 - 4\eta_0\eta_1 + \eta_1^2 + 2\eta_1\eta_{-1} - \eta_{-1}^2)^2 (2\eta_0^2 - 4\eta_0\eta_{-1} - \eta_1^2 + 2\eta_1\eta_{-1} + \eta_{-1}^2)^2} \\\nonumber
& - \frac{a_0^4 a_1 a_{-1} c^2 (2\eta_0 - \eta_1 - \eta_{-1})^4 e^{4\eta_0 x + \eta_1 x + \eta_{-1} x}}{32\eta_0^4 (\eta_1 - \eta_{-1})^4},
\end{align}
\begin{align}\label{C9}
g_{1,7} = \frac{a_0^4 a_1 a_{-1}^2 c^3 (\eta_0 - \eta_{-1})^2 (2\eta_0 - \eta_1 - \eta_{-1})^6 (2\eta_0^2 - 2\eta_0\eta_1 - 2\eta_0\eta_{-1} + \eta_1^2 + \eta_{-1}^2)^2 e^{4\eta_0 x + \eta_1 x + 2\eta_{-1} x}}{256\eta_0^6 (\eta_0 - \eta_1)^2 (\eta_1 - \eta_{-1})^6 (2\eta_0^2 - 4\eta_0\eta_{-1} - \eta_1^2 + 2\eta_1\eta_{-1} + \eta_{-1}^2)^2},
\end{align}
\begin{align}
g_{-1,7} = \frac{a_0^4 a_1^2 a_{-1} c^3 (\eta_0 - \eta_1)^2 (2\eta_0 - \eta_1 - \eta_{-1})^6 (2\eta_0^2 - 2\eta_0\eta_1 - 2\eta_0\eta_{-1} + \eta_1^2 + \eta_{-1}^2)^2 e^{4\eta_0 x + 2\eta_1 x + \eta_{-1} x}}{256\eta_0^6 (\eta_0 - \eta_{-1})^2 (\eta_1 - \eta_{-1})^6 (2\eta_0^2 - 4\eta_0\eta_1 + \eta_1^2 + 2\eta_1\eta_{-1} - \eta_{-1}^2)^2}
\end{align}
\begin{align}\label{C10}
g_{0,7} = - \frac{a_0^3 a_1^2 a_{-1}^2 c^3 (2\eta_0 - \eta_1 - \eta_{-1})^6 (2\eta_0^2 - 2\eta_0\eta_1 - 2\eta_0\eta_{-1} + \eta_1^2 + \eta_{-1}^2)^4 e^{3\eta_0 x + 2\eta_1 x + 2\eta_{-1} x}}{128\eta_0^6 (\eta_1 - \eta_{-1})^6 (2\eta_0^2 - 4\eta_0\eta_1 + \eta_1^2 + 2\eta_1\eta_{-1} - \eta_{-1}^2)^2 (2\eta_0^2 - 4\eta_0\eta_{-1} - \eta_1^2 + 2\eta_1\eta_{-1} + \eta_{-1}^2)^2},
\end{align}
\begin{align}\label{C11}
f_8 = \frac{a_0^4 a_1^2 a_{-1}^2 c^4 (2\eta_0 - \eta_1 - \eta_{-1})^8 (2\eta_0^2 - 2\eta_0\eta_1 - 2\eta_0\eta_{-1} + \eta_1^2 + \eta_{-1}^2)^4 e^{4\eta_0 x + 2\eta_1 x + 2\eta_{-1} x}}{1024\eta_0^8 (\eta_1 - \eta_{-1})^8 (2\eta_0^2 - 4\eta_0\eta_1 + \eta_1^2 + 2\eta_1\eta_{-1} - \eta_{-1}^2)^2 (2\eta_0^2 - 4\eta_0\eta_{-1} - \eta_1^2 + 2\eta_1\eta_{-1} + \eta_{-1}^2)^2}.
\end{align}
\end{lemma}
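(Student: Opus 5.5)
The plan is to solve the cascade \eqref{A1}--\eqref{A16} order by order in $X$, with $c_0=c_1=c$, so that $c_0+c_1=2c$. Each order is then a linear problem for the new coefficients with a source built from the lower-order ones.

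At order $X$, \eqref{A1} together with decay at $-\infty$ (the normalization $g_{i1}\sim a_ie^{\eta_ix}$) forces \eqref{C2}. Here $\mu+\lambda=-\eta_1^2$, $\mu-\lambda=-\eta_{-1}^2$ and $\mu=-\eta_0^2$.

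At order $X^2$, the algebraic relation in \eqref{A2} gives $s_2$ immediately. Integrating $f_2''=c(g_{11}^2+g_{-11}^2+g_{01}^2)$ twice, with no constant or linear part, gives \eqref{C3}.

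Every later step has the same structure:
\begin{itemize}
\item Each $g$-equation at order $X^{2m+1}$ reads $g_{i,2m+1}''-\eta_i^2g_{i,2m+1}=S_i$, where $S_i$ is a finite sum of exponentials $e^{\kappa x}$ with $\kappa$ a nonnegative integer combination of the $\eta_j$.
\item Such an equation is solved termwise by $e^{\kappa x}/(\kappa^2-\eta_i^2)$.
\item The homogeneous part is set to zero to respect the normalization.
\item The $f$-equation and the $s$-equation at each even order are handled the same way. The $f$-equation is a second-order ODE $f_{2m}''=\cdots$, solved by dividing by $\kappa^2$. The $s$-equation is purely algebraic in $s_{2m}$.
\end{itemize}
Using these rules I would compute $g_{i3}$ from \eqref{A3}, then $f_4,s_4$ from \eqref{A4}, then $g_{i5}$ from \eqref{A5}, and so on up to $f_8$.

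At each stage I would substitute $\mu\pm\lambda$ and use the constraint $\eta_1^2+\eta_{-1}^2=2\eta_0^2$ to simplify. This constraint is what makes the divisors $\kappa^2-\eta_i^2$ nonzero. It also explains the factors $(2\eta_0^2-2\eta_0\eta_1-2\eta_0\eta_{-1}+\eta_1^2+\eta_{-1}^2)$, $(\eta_1-\eta_{-1})$ and the like that appear in \eqref{C4}--\eqref{C11}. For example, in $g_{13}$ the exponent $\eta_1+2\eta_0$ gives $(\eta_1+2\eta_0)^2-\eta_1^2=4\eta_0(\eta_0+\eta_1)$. The coefficient $(\eta_1-\eta_0)$ in \eqref{B1} comes from combining the $f_2$ contributions with the $s_2g_{-11}$ contribution. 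The term $e^{(2\eta_0+\eta_{-1})x}$ has divisor $(2\eta_0+\eta_{-1})^2-\eta_1^2=2(\eta_0+\eta_{-1})^2$ after using the constraint.

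The second half of the proof is verifying that the remaining equations, \eqref{A9}--\eqref{A16} at orders $X^9$ through $X^{16}$, hold identically with no new coefficients. This is the truncation claim.

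For the top orders this is easy. Because $f_8$ is a single exponential, \eqref{A16} holds trivially. \eqref{A15} holds because $g_{i7}$ and $f_8$ are single exponentials $e^{\alpha x}$ and $e^{\beta x}$ with
\[
\alpha^2+\beta^2-2\alpha\beta-\eta_i^2=(\beta-\alpha)^2-\eta_i^2=0 .
\]
This uses $\beta-\alpha=\eta_i$, which can be read off from the exponents in \eqref{C9}--\eqref{C11}; for instance $\beta-\alpha=\eta_1$ for $g_{17}$. The same exponent bookkeeping handles \eqref{A14}: every term there is proportional to $e^{(4\eta_0+2\eta_1+2\eta_{-1})x}$ times a product of exponentials, and the check reduces to one rational identity in $\eta$.

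The middle orders \eqref{A9}--\eqref{A13} are the real obstacle. Their residuals involve many exponents, and for each exponent one must verify that the rational coefficient vanishes modulo $\eta_1^2+\eta_{-1}^2=2\eta_0^2$. My approach would be to organize the check by grouping terms by monomial $a_1^{p}a_0^{q}a_{-1}^{r}$. The residual at a given order is a sum of exponentials with distinct exponents (for generic $\eta$), so each coefficient must vanish separately. I would then verify each coefficient with symbolic computation, parametrizing $\eta_{\pm1}=\eta_0\sqrt{1\pm t}$ (or rationally, $\eta_1=\eta_0\frac{1-\tau^2+2\tau}{1+\tau^2}$ and similarly for $\eta_{-1}$) to eliminate the constraint.

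Alternatively, I could shortcut by using the determinant formula anticipated in Step 2: expand $\det(I_4+c\mathcal A C\mathcal A C)$ and the adjugate entries. This structurally shows that $f$ has degree $8$, the $g$'s have degree $7$ and $s$ has degree $6$ in $X$, which is exactly why the cascade closes.
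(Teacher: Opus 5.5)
Your order-by-order scheme for \eqref{A1}--\eqref{A8} is exactly the paper's proof. You solve $g_{i,2m+1}''-\eta_i^2g_{i,2m+1}=S_i$ and $f_{2m}''=\cdots$ termwise with zero homogeneous part, and read off $s_{2m}$ algebraically since $f_0=1$; the paper carries this out up to $f_8$ and stops there. Where you differ is the truncation. The paper's proof never checks \eqref{A9}--\eqref{A16}: the outline only asserts it, and Lemma~\ref{LE5} merely identifies the determinant expansion with these coefficients. So your plan of grouping each residual by amplitude monomial and verifying the rational coefficients on the constraint surface supplies something the paper omits. Your exponent arguments for \eqref{A15} and \eqref{A16} are correct. \eqref{A14}, however, is not a single rational identity but one per exponential: three for the $f$-part (each term of $f_6$ pairs with one $c\,g_{i7}^2$) and two for the $s$-part. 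Each of these does hold.

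Several statements need correcting.
\begin{itemize}
\item In \eqref{B1} the factor $(\eta_1-\eta_0)$ comes only from $g_{11}f_2''-2g_{11}'f_2'$, whose source coefficient is $c(\eta_1-\eta_0)/\eta_0$. The term $s_2g_{-11}$ feeds only the $a_0^2a_{-1}$ and $a_1a_{-1}^2$ terms. It is in the $a_1a_{-1}^2$ term that it combines with the $f_2$ contributions to give $c/(4\eta_{-1}^2)$.
\item Non-resonance of your divisors $\kappa^2-\eta_i^2$ does not follow from the constraint alone. With $\eta_1=3\eta_{-1}$, which the constraint allows, a monomial $a_{-1}^3e^{3\eta_{-1}x}$ in the $g_1$-source would resonate. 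What excludes this is the structural fact that every source monomial for $g_{\pm1}$ lacking $a_{\pm1}$ contains $a_0^2$.
\item The displayed forms \eqref{C4}--\eqref{C11} have $(\eta_1-\eta_{-1})$ and $(\eta_0-\eta_{\pm1})$ in their denominators. What you would be verifying is therefore the case $\lambda\neq0$.
\item Your determinant ``shortcut'' only bounds the $X$-degrees. It closes the cascade only after one proves independently that $\det M$ and the adjugate entries satisfy \eqref{eq9.1d}, for instance via $KC+CK=R^TR$ with $K=\operatorname{diag}(\eta_1,\eta_0,\eta_0,\eta_{-1})$. The paper does not prove this either, so the shortcut cannot replace the explicit check of \eqref{A9}--\eqref{A13}.
\end{itemize}
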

\begin{proof}
Let $$\eta_{1}=\sqrt{|(\mu+\lambda)|},\ \eta_{-1}=\sqrt{|(\mu-\lambda)|},\ \eta_{0}=\sqrt{|\mu|},$$we have  $$\eta^{2}_{1}+\eta^{2}_{-1}=2\eta^{2}_{0}.$$By \eqref{A1}, we get

$$g_{11}=a_{1}e^{\eta_{1}x},\ g_{-11}=a_{-1}e^{\eta_{-1}x},\ g_{01}=a_{0}e^{\eta_{0}x}.$$ Since $$f''_{2}=c(g^{2}_{11}+g^{2}_{-11}+g^{2}_{01})=c(a^{2}_{1}e^{2\eta_{1}x}+a^{2}_{-1}e^{2\eta_{-1}x}+a^{2}_{0}e^{2\eta_{0}x}).$$
From \eqref{A2}, we have $$f_{2}=c\left(\frac{a^{2}_{1}}{4\eta^{2}_{1}}e^{2\eta_{1}x}+\frac{a^{2}_{-1}}{4\eta^{2}_{-1}}e^{2\eta_{-1}x}+\frac{a^{2}_{0}}{4\eta^{2}_{0}}e^{2\eta_{0}x}\right),$$
and $$s_{2}=a^{2}_{0}e^{2\eta_{0}x}-2a_{1}a_{-1}e^{(\eta_{1}+\eta_{-1})x}.$$ \eqref{B1}-\eqref{B3} can be obtained by \eqref{A3}.
Since
$$f''_{4}=(f'_{2})^{2}-f_{2}f''_{2}+(c_{0}+c_{1})(g_{11}g_{13}+g_{-11}g_{-13}+g_{01}g_{03}),$$
and
$$s_{4}=-s_{2}f_{2}+2g_{01}g_{03}-2g_{11}g_{-13}-2g_{13}g_{-11}.$$
Through a large number of complex calculations, we get \eqref{B4} and \eqref{C1}. By \eqref{A5}, we have
$$g''_{15}+(\mu+\lambda)g_{15}=-(g''_{13}+(\mu+\lambda)g_{13})f_{2}-g_{11}f''_{4}-g_{13}f''_{2}+2g'_{11}f'_{4}+2g'_{13}f'_{2}-c_{1}(s_{2}g_{-13}+s_{4}g_{-11}),$$
through extremely complex calculations and solving the second-order ODE, we can obtain \eqref{C4} - \eqref{C6}. By \eqref{A6}, we have \eqref{C7} and \eqref{C8}. By \eqref{A7}, we have \eqref{C9}-\eqref{C10}. Finally, \eqref{C11} can be obtained by \eqref{A8} and some complex calculations.
\end{proof}

\section{Cascade functions
and determinant representation}\label{S2}
Using the matrices defined in \eqref{M1},\eqref{M2},\eqref{M3} and \eqref{T2}, let
\begin{equation}\label{M4}
f = \det M, \qquad G = R \, \text{adj}(M) A R^T, \qquad G = \begin{pmatrix} g_1 & g_0 / \sqrt{2} \\ g_0 / \sqrt{2} & g_{-1} \end{pmatrix},
\end{equation}
where \(\text{adj}(M)\) denotes the adjugate matrix, i.e., the transpose of the cofactor matrix, satisfying \(M \, \text{adj}(M) = (\det M) I_4\). It is neither the transpose matrix nor the conjugate transpose. We will prove later that for \(f > 0\), \(G/f\) is exactly \eqref{T1}.

For any \(4 \times 4\) matrix \(M\) and 4-dimensional column vectors \(v, w\), first take the Schur complement, where \(M\) is invertible:
\begin{equation}\label{M5}
\begin{aligned}
\det \begin{pmatrix} M & v \\ w^T & 0 \end{pmatrix} &= \det M \det(-w^T M^{-1} v) = -(\det M)w^T M^{-1} v = -w^T \text{adj}(M) v.
\end{aligned}
\end{equation}
Let
\begin{equation}\label{M6}
r_1 = (1, 1, 0, 0)^T, \qquad r_{-1} = (0, 0, 1, 1)^T, \qquad R = \begin{pmatrix} r_1^T \\ r_{-1}^T \end{pmatrix}.
\end{equation}

\noindent Reading term by term from \eqref{M4}, we obtain
\begin{equation}\label{M7}
\begin{aligned}
g_1 &= r_1^T \, \text{adj}(M) A r_1, \ \
\frac{g_0}{\sqrt{2}} = r_1^T \, \text{adj}(M) A r_{-1}, \ \
g_{-1} = r_{-1}^T \, \text{adj}(M) A r_{-1}.
\end{aligned}
\end{equation}
\noindent Both sides are polynomials in the elements of \(M, v, w\), so \eqref{M5} also holds for singular \(M\). Substituting the three sets of vectors from \eqref{M7} into \eqref{M5}, we obtain
\begin{equation}\label{M8}
g_1 = -\det \begin{pmatrix} M & \mathcal{A}r_1 \\ r_1^T & 0 \end{pmatrix}, \qquad g_{-1} = -\det \begin{pmatrix} M & \mathcal{A}r_{-1} \\ r_{-1}^T & 0 \end{pmatrix},\ g_0 = -\sqrt{2} \det \begin{pmatrix} M & \mathcal{A}r_{-1} \\ r_1^T & 0 \end{pmatrix}.
\end{equation}
The negative sign comes from \eqref{M5}, and the \(\sqrt{2}\) before \(g_0\) comes from \(G_{12} = g_0 / \sqrt{2}\). Redefine the six-order bordered matrix
\begin{equation*}
\mathcal{B} = \begin{pmatrix} M & \mathcal{A}R^T \\ R & 0_{2 \times 2} \end{pmatrix}, \qquad s := -2 \det \mathcal{B}.
\end{equation*}
Where \(M\) is invertible, the Schur complement step-by-step gives
\begin{equation}\label{M10}
\begin{aligned}
\det \mathcal{B} &= \det M \det(-RM^{-1}\mathcal{A}R^T)= f \det(-U_\mathbf{a}) = f \det U_\mathbf{a},
\end{aligned}
\end{equation}
where the last step uses \(U_\mathbf{a} \in \mathbb{R}^{2 \times 2}\), so \(\det(-U_\mathbf{a}) = (-1)^2 \det U_\mathbf{a}\). Hence
\begin{equation}\label{M11}
s = -2f \det U_\mathbf{a}.
\end{equation}
On the other hand, \(\text{adj}(M) = fM^{-1}\) gives
\begin{equation*}
G = fU_\mathbf{a}, \quad \det G = f^2 \det U_\mathbf{a}.
\end{equation*}
Multiplying \eqref{M11} by \(f\), and then directly expanding the second-order determinant:
\begin{equation}\label{M12}
\begin{aligned}
sf = -2f^2 \det U_\mathbf{a} = -2 \det G = -2 \det \begin{pmatrix} g_1 & g_0 / \sqrt{2} \\ g_0 / \sqrt{2} & g_{-1} \end{pmatrix} = g_0^2 - 2g_1 g_{-1}.
\end{aligned}
\end{equation}
Equations \eqref{M10}--\eqref{M12} hold first where \(M\) is invertible; since both sides are polynomials in the matrix elements, and as \(x \to -\infty\), \(M \to I_4\), they are also global algebraic identities.
\begin{lemma}\label{LE2}
 For any real amplitude \(a\) and any \(x \in \mathbb{R}\), the matrix \(C\) is positive definite,  \(M(x)\) is invertible and \(f(x) = \det M(x) \ge 1\).
\end{lemma}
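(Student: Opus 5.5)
The argument has three steps: show $C$ is a positive definite Gram matrix, conjugate $M$ by $C^{1/2}$ to get $I_4+cH^2$ with $H$ symmetric, and read off $f\ge1$ from the eigenvalues.

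\textbf{Step 1: $C$ is positive definite.} $C$ is block diagonal with two $2\times2$ blocks. Each block has entries $\frac{1}{\eta_i+\eta_j}$ for a pair $(\eta_i,\eta_j)$, namely $(\eta_1,\eta_0)$ and $(\eta_0,\eta_{-1})$. Such a block is a Cauchy/Gram matrix:
\[
\frac{1}{\eta_i+\eta_j}=\int_{-\infty}^{0}e^{\eta_i t}e^{\eta_j t}\,dt .
\]
Hence for $\xi\in\mathbb R^2$,
\[
\xi^TC_{\rm block}\xi=\int_{-\infty}^0\Big(\sum_k\xi_ke^{\eta_kt}\Big)^2dt\ge0 .
\]
When $\eta_i\ne\eta_j$ the exponentials are linearly independent, so the form is strictly positive. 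One can also check directly: the diagonal entries are positive, and the determinant is
\[
\frac{1}{4\eta_i\eta_j}-\frac{1}{(\eta_i+\eta_j)^2}=\frac{(\eta_i-\eta_j)^2}{4\eta_i\eta_j(\eta_i+\eta_j)^2}.
\]
\emph{Obstacle:} in the collision case $\eta_1=\eta_0$ (i.e.\ $\lambda=0$) this determinant vanishes. Then $C$ is only positive semidefinite, and the claimed positive definiteness fails literally. I would handle this case separately. For $\lambda=0$, $C=\frac{1}{2\eta}\begin{pmatrix}1&1\\1&1\end{pmatrix}\oplus\frac{1}{2\eta}\begin{pmatrix}1&1\\1&1\end{pmatrix}$ is still symmetric and positive semidefinite, so $C^{1/2}$ exists as a symmetric semidefinite square root. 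I would phrase Step 2 so that it needs only $C\succeq0$. Alternatively, one can obtain the determinant bound by continuity in $(\eta_1,\eta_0,\eta_{-1})$ from the strictly definite case.

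\textbf{Step 2: similarity to $I_4+cH^2$.} $\mathcal A(x)$ is real symmetric: it is diagonal except for the symmetric off-diagonal pair $\frac{a_0}{\sqrt2}e^{\eta_0x}$ in positions $(2,3)$ and $(3,2)$. When $C\succ0$, set $H=C^{1/2}\mathcal AC^{1/2}$, which is symmetric. Then
\[
C^{1/2}MC^{-1/2}=I_4+cC^{1/2}\mathcal AC\mathcal AC^{1/2}=I_4+cH^2 .
\]
Since $H$ is real symmetric, its eigenvalues $\rho_j$ are real, so $I_4+cH^2$ has eigenvalues $1+c\rho_j^2\ge1$ (using $c>0$). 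Therefore
\[
f=\det M=\prod_j(1+c\rho_j^2)\ge1,
\]
and $M$ is invertible. To avoid $C^{-1/2}$ in the semidefinite case, use Sylvester's identity $\det(I+XY)=\det(I+YX)$ with $X=c\mathcal AC\mathcal A$ and $Y=C=C^{1/2}C^{1/2}$:
\[
\det(I_4+c\mathcal AC\mathcal AC)=\det(I_4+cC^{1/2}\mathcal AC\mathcal AC^{1/2})=\det(I_4+cH^2)\ge1 .
\]
This uses only the symmetric square root $C^{1/2}$ of a positive semidefinite $C$, so it covers all cases uniformly.

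\textbf{Step 3: invertibility.} This is immediate from $f\ge1>0$. I expect the only delicate point to be the degenerate case of Step 1, and Sylvester's identity removes it.
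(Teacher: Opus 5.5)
Your argument is correct and follows the paper's proof: positivity of the two Cauchy/Gram blocks, the similarity $C^{1/2}MC^{-1/2}=I_4+cH^2$ with $H=C^{1/2}\mathcal A C^{1/2}$ real symmetric, and $f=\prod_j(1+c\rho_j^2)\ge1$, which also gives invertibility of $M$. You are also right that $C$ is only semidefinite in the collision case $\lambda=0$, which the paper's proof tacitly excludes by assuming $p\ne q$. Your Sylvester step does cover that case, but the factorization should be $X=c\mathcal A C\mathcal A C^{1/2}$, $Y=C^{1/2}$ rather than $X=c\mathcal AC\mathcal A$, $Y=C$, since only the former yields your displayed identity $\det M=\det(I_4+cH^2)$.
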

\begin{proof}
For positive numbers \(p \neq q\), we have
\begin{equation*}
\det \begin{pmatrix} (2p)^{-1} & (p+q)^{-1} \\ (p+q)^{-1} & (2q)^{-1} \end{pmatrix} = \frac{(p-q)^2}{4pq(p+q)^2} > 0.
\end{equation*}
The diagonal elements of the two blocks are positive, so both \(2 \times 2\) blocks in \eqref{M1} are positive definite. We can also define
\begin{equation*}
\Phi(t) = \begin{pmatrix} e^{\eta_1 t} & e^{\eta_0 t} & 0 & 0 \\ 0 & 0 & e^{\eta_0 t} & e^{\eta_{-1} t} \end{pmatrix}, \qquad C = \int_{-\infty}^{0} \Phi(t)^T \Phi(t) \, dt.
\end{equation*}
Since the two indices in the same row are different, its four columns are linearly independent as functions in \(L^2((-\infty, 0); \mathbb{R}^2)\). Here, it is not meant that there exist four linearly independent constant vectors in \(\mathbb{R}^2\).
Let \(H = C^{1/2} \mathcal{A} C^{1/2}\).  Then \(H\) is a real symmetric matrix, and
\begin{equation*}
C^{1/2} M C^{-1/2} = I_4 + cH^2.
\end{equation*}
\noindent The right-hand side is positive definite, with eigenvalues \(1 + c\rho_j^2 \ge 1\), where \(\rho_j\) are the real eigenvalues of \(H\). Hence
\begin{equation*}
f = \det(I_4 + cH^2) = \prod_{j=1}^{4} (1 + c\rho_j^2) \ge 1.
\end{equation*}
\noindent This does not require \(H\) or \(\mathcal{A}\) to be invertible. Note that \(M\) is generally not symmetric, so \(M\) cannot be directly called a positive definite matrix; what is used here is its similarity relation to a positive definite symmetric matrix.
\end{proof}
 Let
\begin{equation}\label{M13}
\eta_1 = \sqrt{-(\mu + \lambda)}, \qquad \eta_0 = \sqrt{-\mu}, \qquad \eta_{-1} = \sqrt{-(\mu - \lambda)}, \qquad \eta_* := \min_{i=1,0,-1} \eta_i > 0.
\end{equation}

\noindent Then the three \(\eta_i\) are pairwise distinct, and
\begin{equation}\label{M14}
\eta_1^2 + \eta_{-1}^2 = 2\eta_0^2, \qquad \eta_1 + \eta_{-1} \le 2\eta_0, \qquad 2\eta_0 > \eta_1, \eta_{-1}.
\end{equation}
To utilize the left asymptotic lemma, we provide the following left-end asymptotics.
\begin{lemma}\label{LE3}
For arbitrary \(a \in \mathbb{R}^3\), we have
\begin{equation*}
\begin{aligned}
u_i^a(x) &= a_i e^{\eta_i x} + O(e^{(\eta_i + 2\eta_*) x}), \\
(u_i^a)'(x) &= \eta_i a_i e^{\eta_i x} + O(e^{(\eta_i + 2\eta_*) x}),\  i=1,0,-1.
\end{aligned}
\end{equation*}
\end{lemma}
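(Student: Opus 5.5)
We use the determinant representation \(U_\mathbf{a}=RM(x)^{-1}\mathcal A(x)R^T\) and expand \(M^{-1}\) as a Neumann series near \(x=-\infty\). Write \(\mathcal A(x)=E(x)\mathcal A_0\), where \(E(x)=\operatorname{diag}(e^{\eta_1x},e^{\eta_0x},e^{\eta_0x},e^{\eta_{-1}x})\) and \(\mathcal A_0\) is the constant amplitude matrix. Then \(K(x):=c\mathcal A(x)C\mathcal A(x)C\) has entries that are sums of exponentials \(e^{(\eta_j+\eta_k)x}\). Every such exponent is at least \(2\eta_*\). Hence \(\|K(x)\|=O(e^{2\eta_*x})\) as \(x\to-\infty\), and for \(x\le b_0\) with \(b_0\) sufficiently negative we have \(\|K\|\le 1/2\). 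This gives
\[
M(x)^{-1}=I_4-K(x)+K(x)^2M(x)^{-1}=I_4+O\big(e^{2\eta_*x}\big)\,\Xi(x),
\]
where \(\Xi\) is bounded.

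The key point is to track the \emph{row} exponential weights rather than only the norm. Each entry \((M^{-1}-I_4)_{jk}\) has the form \(\sum e^{(\eta_{(j)}+\eta_m)x}(\cdots)\), since the \(j\)-th row of \(K\) carries the factor \(e^{\eta_{(j)}x}\) from the left factor \(\mathcal A\). Here \(\eta_{(j)}\) denotes the rate in the \(j\)-th diagonal slot of \(E\). To make this precise, write
\[
K=E\,\widetilde K\,E\,\mathcal A_0C\mathcal A_0C,
\]
so \(K=E\,\widetilde K\) with \(\widetilde K=c\mathcal A_0C\,\mathcal A\,C=O(e^{\eta_*x})\). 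Then
\[
M^{-1}-I=-(I+K)^{-1}K=-(I+K)^{-1}E\widetilde K.
\]
This is not obviously row-weighted, because \((I+K)^{-1}\) mixes rows. Instead use \(M^{-1}-I=-K M^{-1}=-E\widetilde K M^{-1}\), which is row-weighted by \(E\), with \(\widetilde K M^{-1}=O(e^{\eta_*x})\).

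Multiplying on the right by \(\mathcal A R^T=E\mathcal A_0R^T\) gives
\[
U_\mathbf{a}=R\mathcal A R^T-RE\,\widetilde K M^{-1}E\mathcal A_0R^T.
\]
\begin{itemize}
\item The leading term \(R\mathcal AR^T\) has entries \(a_1e^{\eta_1x}\), \(\tfrac{a_0}{\sqrt2}e^{\eta_0x}\) and \(a_{-1}e^{\eta_{-1}x}\).
\item In the correction, the \((1,1)\) entry of \(RE(\cdots)ER^T\) picks up the left weight \(e^{\eta_1x}\) or \(e^{\eta_0x}\) (rows 1 and 2) and one more factor \(e^{\eta_m x}\) from the right \(E\). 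The middle factor contributes \(O(e^{\eta_* x})\).
\end{itemize}
So the correction is \(O(e^{(\eta_{\text{left}}+\eta_{\text{right}}+\eta_*)x})\).

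Here lies the main obstacle. Row 2 of \(r_1\) carries \(e^{\eta_0x}\) rather than \(e^{\eta_1x}\), so we need \(\eta_0+\eta_m+\eta_*\ge \eta_1+2\eta_*\) for all terms. This is not automatic if \(\eta_1>\eta_0\). I would resolve it by appealing instead to the explicit finite cascade of Lemma~\ref{LE1}, via \(u_i=g_i/f\) and \(f=\det M\), as Section~\ref{S2} establishes. By Lemma~\ref{LE1}, \(g_i=a_ie^{\eta_ix}+g_{i3}+g_{i5}+g_{i7}\), and every exponent occurring in \(g_{i3},g_{i5},g_{i7}\) has the form \(\eta_i+(\text{sum of two or more further }\eta\text{'s})\). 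For example, \(g_{13}\) has exponents \(\eta_1+2\eta_0\), \(2\eta_0+\eta_{-1}\) and \(\eta_1+2\eta_{-1}\). The only suspicious term is \(e^{(2\eta_0+\eta_{-1})x}\) in \(g_{13}\), and \(2\eta_0+\eta_{-1}\ge\eta_1+2\eta_*\) by \eqref{M14}, since \(2\eta_0>\eta_1\) and \(\eta_{-1}\ge\eta_*\). Sharper: \(2\eta_0-\eta_1\ge\eta_*\) would suffice, and follows from \(\eta_1^2+\eta_{-1}^2=2\eta_0^2\).

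I would check each exponent of \(g_{i,2k+1}\) (\(k\ge1\)) against \(\eta_i+2\eta_*\) in the same way, using \(\eta_1^2+\eta_{-1}^2=2\eta_0^2\) and \(\eta_j\ge\eta_*\). For the denominator, \(f=1+f_2+\dots\) with \(f_{2k}=O(e^{2\eta_*x})\), so \(1/f=1+O(e^{2\eta_*x})\). Combining the two gives \(u_i=a_ie^{\eta_ix}+O(e^{(\eta_i+2\eta_*)x})\).

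For the derivatives, every term is a finite sum of exponentials, so differentiation multiplies each term by its exponent. This keeps the same orders: \(g_i'=\eta_ia_ie^{\eta_ix}+O(e^{(\eta_i+2\eta_*)x})\) and \(f'=O(e^{2\eta_*x})\). The quotient rule then gives the stated estimate for \((u_i^a)'\). When \(a_i=0\), the leading term simply vanishes, and the bound still holds.
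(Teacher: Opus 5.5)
Once you set the Neumann-series attempt aside, your argument is the paper's: write $g_i$ and $f=\det M$ as the finite exponential sums of Lemma~\ref{LE1}. Then check that every tail exponent of $g_i$ is at least $\eta_i+2\eta_*$ (immediately for ordinary terms, and via $2\eta_0\ge\eta_1+\eta_{-1}$ from \eqref{M14} for exchange terms such as $e^{(2\eta_0+\eta_{-1})x}$ in $g_{13}$), check that every exponent in $f-1$ is at least $2\eta_*$, differentiate termwise, and divide. One small correction: ``$2\eta_0>\eta_1$ and $\eta_{-1}\ge\eta_*$'' only yields $2\eta_0+\eta_{-1}-\eta_1>\eta_*$, so the reason that actually works is your sharper one, $2\eta_0-\eta_1\ge\eta_{-1}\ge\eta_*$; incidentally, this also removes the row-$2$ obstacle you flagged in the $(1,1)$ entry of the Neumann route.
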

\begin{proof}
From Lemma \ref{LE1}, we can rewrite $f$ and $g_{i}(x)$ as following
\begin{equation}\label{M15}
f(x) = 1 + \sum_{\nu \in \mathcal{F}} b_{\nu} e^{\beta_{\nu} x}, \qquad g_i(x) = a_i e^{\eta_i x} + \sum_{\nu \in \mathcal{C}_i} d_{i,\nu} e^{\gamma_{i,\nu} x}.
\end{equation}
Since the exponent rate of \(f_2\) is not less than \(2\eta_*\); each monomial of \(f_4, f_6, f_8\) contains four, six, and eight positive exponential factors, respectively, thus
\begin{equation}\label{M20}
\beta_{\nu} \geq 2\eta_{*} \qquad (\nu \in \mathcal{F}).
\end{equation}
The ordinary tail terms of the numerators have at least two additional positive exponential factors compared to \(e^{\eta_i x}\). For Spin-exchange terms, using \eqref{M14}, we have \begin{equation}\label{M16}
2\eta_0 + \eta_{-1} - \eta_1 \geq 2\eta_{-1} \geq 2\eta_*, \qquad 2\eta_0 + \eta_1 - \eta_{-1} \geq 2\eta_1 \geq 2\eta_*.
\end{equation}
The exchange terms of the middle component satisfy
\[
(\eta_1 + \eta_0 + \eta_{-1}) - \eta_0 = \eta_1 + \eta_{-1} \geq 2\eta_*.
\]
Applying the above three types of inequalities term by term, we obtain
\begin{equation}\label{M17}
\gamma_{i,\nu} \geq \eta_i + 2\eta_* \quad (\nu \in \mathcal{C}_i, \ i = 1, 0, -1).
\end{equation}

\noindent Since the sum in \eqref{M15} is finite, \eqref{M20}--\eqref{M17} yield
\begin{equation}\label{M18}
|f - 1| + |f'| \leq C e^{2\eta_* x},\ |g_i - a_i e^{\eta_i x}| + |g_i' - \eta_i a_i e^{\eta_i x}| \leq C e^{(\eta_i + 2\eta_*) x}, \ (x \leq x_0).
\end{equation}
From \(f \ge 1\) in Lemma \ref{LE2}, we have
\[
\frac{1}{f} - 1 = -\frac{f - 1}{f} = O(e^{2\eta_* x}), \qquad \left(\frac{1}{f}\right)' = -\frac{f'}{f^2} = O(e^{2\eta_* x}).
\]
Thus
\[
u_i^a - a_i e^{\eta_i x} = \frac{g_i - a_i e^{\eta_i x}}{f} + a_i e^{\eta_i x} \left(\frac{1}{f} - 1\right),
\]
\[
(u_i^a)' - \eta_i a_i e^{\eta_i x} = \frac{g_i' - \eta_i a_i e^{\eta_i x}}{f} + \eta_i a_i e^{\eta_i x} \left(\frac{1}{f} - 1\right) + g_i \left(\frac{1}{f}\right)'.
\]

\noindent By \eqref{M18}, we obtain
\begin{equation*}
\begin{aligned}
u_i^a(x) &= a_i e^{\eta_i x} + O(e^{(\eta_i + 2\eta_*) x}), \\
(u_i^a)'(x) &= \eta_i a_i e^{\eta_i x} + O(e^{(\eta_i + 2\eta_*) x}),i=1,0,-1.
\end{aligned}
\end{equation*}
This formula allows \(a_i = 0\), in which case the corresponding component may start from a higher order.
\end{proof}
Next, we consider the right-end decay, we give following lemma.
\begin{lemma}\label{LE4}
For arbitrary \(a \in \mathbb{R}^3\), we have
\begin{equation*}
U_\mathbf{a}(x) = O(e^{-\eta_* x}), \ \ U_\mathbf{a}'(x) = O(e^{-\eta_* x}) \quad (x \to +\infty).
\end{equation*}
\end{lemma}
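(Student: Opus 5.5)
We prove the right-end decay of \(U_\mathbf{a}=RM^{-1}\mathcal{A}R^T\) by conjugating to a symmetric resolvent and restricting to the nonzero coordinate subspace, so that no inverse of a possibly singular \(\mathcal{A}\) is ever needed. With \(H=C^{1/2}\mathcal{A}C^{1/2}\) as in Lemma \ref{LE2}, we have \(M=C^{-1/2}(I_4+cH^2)C^{1/2}\). Hence
\[
U_\mathbf{a}=RC^{-1/2}(I_4+cH^2)^{-1}HC^{-1/2}R^T .
\]
The outer factors \(RC^{-1/2}\) and \(C^{-1/2}R^T\) are constant, so it suffices to show that \((I_4+cH^2)^{-1}H=O(e^{-\eta_*x})\) as \(x\to+\infty\), together with the same bound for its derivative.

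\textbf{Reduction to the support.} Let \(E\) be the span of the coordinate vectors \(e_k\) on which \(\mathcal{A}\) acts nontrivially. In the basis \(e_1,e_2,e_3,e_4\) these are: \(e_1\) if \(a_1\neq0\); the pair \(e_2,e_3\) if \(a_0\neq0\); and \(e_4\) if \(a_{-1}\neq0\). Write \(P\) for the coordinate projection onto \(E\), so that \(\mathcal{A}=P\mathcal{A}P\) and \(\mathcal{A}_E:=P\mathcal{A}P|_E\) is invertible on \(E\). Using \(\mathcal{A}=P\mathcal{A}P\) and the push-through identity, expand \(M^{-1}\mathcal{A}=(I+c\mathcal{A}C\mathcal{A}C)^{-1}\mathcal{A}\). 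This gives
\[
M^{-1}\mathcal{A}=\mathcal{A}\,(I+cC\mathcal{A}C\mathcal{A})^{-1}=P\bigl(\mathcal{A}_E^{-1}+cC_E\mathcal{A}_EC_E\bigr)^{-1}P ,
\]
where \(C_E=PCP|_E\) is positive definite as a principal submatrix of \(C\). The right-hand form is checked by multiplying out on \(E\): there \(M^{-1}\mathcal{A}\) coincides with \((I_E+c\mathcal{A}_EC_E\mathcal{A}_EC_E)^{-1}\mathcal{A}_E=(\mathcal{A}_E^{-1}+cC_E\mathcal{A}_EC_E)^{-1}\), and it vanishes off \(E\).

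\textbf{Lower bound on \(K\).} Set \(K(x):=\mathcal{A}_E^{-1}+cC_E\mathcal{A}_EC_E\), which is invertible by Lemma \ref{LE2}. We want \(\|K^{-1}\|\le Ce^{-\eta_*x}\) for large \(x\). Write \(\mathcal{A}_E=D\,Q\), where \(D\) is the diagonal matrix of the exponentials \(e^{\eta_kx}\) on \(E\) and \(Q\) is a constant invertible matrix (diagonal, except for the \(2\times2\) swap block carrying \(a_0/\sqrt2\)). The block \(Q\) commutes with \(D\) because both middle entries carry \(e^{\eta_0x}\). Then
\[
D^{-1/2}KD^{-1/2}=Q^{-1}D^{-2}+c\,D^{1/2}C_ED^{1/2}\,\widetilde Q ,
\]
where \(\widetilde Q\) is the constant matrix obtained from \(Q\) after the commutations, and the last term dominates. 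The cleanest route is symmetric: \(C_E^{1/2}KC_E^{1/2}=C_E^{1/2}\mathcal{A}_E^{-1}C_E^{1/2}+cH_E^2\) with \(H_E=C_E^{1/2}\mathcal{A}_EC_E^{1/2}\). Hence it suffices to show that the symmetric matrix \(H_E^{-1}+cH_E\) has all eigenvalues of modulus at least \(c\,\sigma_{\min}(H_E)\), and that \(\sigma_{\min}(H_E)\ge c'e^{\eta_*x}\). The first claim is immediate: for \(\rho\) an eigenvalue of \(H_E\), the corresponding eigenvalue of \(H_E^{-1}+cH_E\) is \(\rho^{-1}+c\rho\), whose modulus is at least \(c|\rho|\) because \(\rho^{-1}\) and \(c\rho\) have the same sign. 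For the second claim, \(\sigma_{\min}(H_E)\ge\sigma_{\min}(C_E)\,\sigma_{\min}(\mathcal{A}_E)\ge c'\min_{k\in E}|a_k|e^{\eta_kx}\ge c''e^{\eta_*x}\) for \(x\ge0\). Together these give \(\|(H_E^{-1}+cH_E)^{-1}\|\le Ce^{-\eta_*x}\), and therefore \(\|K^{-1}\|\le Ce^{-\eta_*x}\), which yields \(U_\mathbf{a}=O(e^{-\eta_*x})\).

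\textbf{Derivative.} We have \((K^{-1})'=-K^{-1}K'K^{-1}\) with \(K'=-\mathcal{A}_E^{-1}\mathcal{A}_E'\mathcal{A}_E^{-1}+cC_E\mathcal{A}_E'C_E\). Since \(\mathcal{A}_E'=\mathcal{A}_E\Lambda\) for a constant diagonal \(\Lambda\) of decay rates, the matrix \(K'\) is built like \(K\) but with extra constant factors. The key step, which I expect to be the main obstacle, is controlling \(K^{-1}C_E\mathcal{A}_E\Lambda C_EK^{-1}\) and \(K^{-1}\mathcal{A}_E^{-1}\Lambda K^{-1}\) when the rates differ; a crude norm bound would only give \(e^{-2\eta_*x}\|\mathcal{A}_E\|\), which can grow. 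To handle this, rewrite \(K^{-1}C_E\mathcal{A}_E=c^{-1}C_E^{-1}\bigl(I-K^{-1}\mathcal{A}_E^{-1}\bigr)\), which follows from \(K^{-1}K=I\), and similarly for the other side. The first product then becomes \(O(1)\cdot\Lambda\cdot K^{-1}\) plus a term involving \(K^{-1}\mathcal{A}_E^{-1}\). The remaining factor \(\mathcal{A}_E^{-1}\) is bounded for \(x\ge0\), since its entries are \(e^{-\eta_kx}\) times constants. All terms are therefore \(O(\|K^{-1}\|)=O(e^{-\eta_*x})\), which gives \(U_\mathbf{a}'=O(e^{-\eta_*x})\).
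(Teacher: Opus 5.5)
Your proof works, and for the bound on \(U_\mathbf{a}\) itself it is the paper's argument in different notation. Your \(K^{-1}\) is exactly \((I_E+c\mathcal{A}_EC_E\mathcal{A}_EC_E)^{-1}\mathcal{A}_E\), and \(K=C_E^{1/2}(H_E^{-1}+cH_E)C_E^{1/2}\). Moreover \((H_E^{-1}+cH_E)^{-1}=L_E(cI_E+L_E^2)^{-1}\) with \(L_E=H_E^{-1}\). So your bound \(1/(c\,\sigma_{\min}(H_E))\) is precisely the paper's \(\|L_E\|\,\|B_E\|\le\|L_E\|/c\).

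You genuinely diverge from the paper on the derivative. The paper writes everything in terms of \(L_E=C_E^{-1/2}\mathcal{A}_E^{-1}C_E^{-1/2}\), which involves only the decaying matrix \(\mathcal{A}_E^{-1}\). It then has \(L_E,L_E'=O(e^{-\eta_*x})\), \(\|B_E\|\le c^{-1}\) and \(B_E'=O(e^{-2\eta_*x})\), so the product rule finishes at once and no growing factor ever appears. You instead differentiate \(K\), which contains the growing \(\mathcal{A}_E\). You correctly notice that the naive bound \(e^{-2\eta_*x}\|\mathcal{A}_E\|\) fails when \(\eta_{\max}>2\eta_*\), which the relation \(\eta_1^2+\eta_{-1}^2=2\eta_0^2\) permits. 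You repair this with the identity coming from \(K^{-1}K=I\). That is valid, but it is exactly the difficulty the \(L_E\)-parametrization avoids.

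Several displayed formulas need correcting, though none affects the strategy:
\begin{itemize}
\item The identity \(C_E^{1/2}KC_E^{1/2}=C_E^{1/2}\mathcal{A}_E^{-1}C_E^{1/2}+cH_E^2\) is false; its second term should be \(cC_EH_EC_E\). What your next sentence actually uses is \(C_E^{-1/2}KC_E^{-1/2}=H_E^{-1}+cH_E\), and that identity is the one to state.
\item The preceding formula for \(D^{-1/2}KD^{-1/2}\) is also wrong, because \(C_E\) does not commute with \(D\). Just delete it, since you abandon it anyway.
\item From \(K^{-1}K=I\) one gets \(K^{-1}C_E\mathcal{A}_E=c^{-1}(I-K^{-1}\mathcal{A}_E^{-1})C_E^{-1}\), with \(C_E^{-1}\) on the right rather than the left. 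This is harmless for norm bounds.
\item The term \(K^{-1}\Lambda\mathcal{A}_E^{-1}K^{-1}\) is no obstacle, since it is already \(O(e^{-3\eta_*x})\).
\item No second substitution ``on the other side'' is needed. After one substitution, the remaining factor \(\Lambda C_EK^{-1}\) is already \(O(e^{-\eta_*x})\).
\item You should also dispose of the trivial case \(E=\emptyset\), where \(U_\mathbf{a}\equiv0\).
\end{itemize}
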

\begin{proof}
Let \(E\) is a fixed non-zero coordinate subspace of \(\mathcal{A}\). If \(E\) is empty, then \(\mathcal{A} = 0\) and \(U_\mathbf{a} = 0\), and the conclusion is obvious. In the following, we assume that \(E\) is non-empty. Arranging the \(E\) coordinates in front, we have
\[
\mathcal{A} = \begin{pmatrix} \mathcal{A}_E & 0 \\ 0 & 0 \end{pmatrix}, \qquad \mathcal{C} = \begin{pmatrix} C_E & C_{E, E^\perp} \\ C_{E^\perp, E} & C_{E^\perp} \end{pmatrix}.
\]

\noindent Here \(\mathcal{A}_E\) is invertible, and \(C_E\) is a positive definite principal submatrix. Direct multiplication shows that \(M\) is a block upper triangular matrix, whose upper-left and lower-right blocks are respectively
\[
I_E + c\mathcal{A}_E C_E \mathcal{A}_E C_E, \quad I_{E^\perp}.
\]
Therefore, using \(R_E\) to denote the columns of \(R\) retaining \(E\), strictly speaking we have
\begin{equation*}
U_\mathbf{a} = R_E (I_E + c\mathcal{A}_E C_E \mathcal{A}_E C_E)^{-1} \mathcal{A}_E R_E^T.
\end{equation*}
Let \(H_E = C_E^{1/2} \mathcal{A}_E C_E^{1/2}\), \(L_E = H_E^{-1} = C_E^{-1/2} \mathcal{A}_E^{-1} C_E^{-1/2}\). By the matrix identity corresponding to the scalar rational function, we have
\begin{equation}\label{M22}
U_\mathbf{a} = R_E C_E^{-1/2} L_E (cI_E + L_E^2)^{-1} C_E^{-1/2} R_E^T.
\end{equation}
The non-zero elements of \(\mathcal{A}_E^{-1}\) are fixed coefficients multiplied by \(e^{-\eta_i x}\), hence
\[
L_E = O(e^{-\eta_* x}), \quad L_E' = O(e^{-\eta_* x}) \quad (x \to +\infty).
\]
 Write \(B_E = (cI_E + L_E^2)^{-1}\), then \(\|B_E\| \le c^{-1}\), and
\[
B_E' = -B_E(L_E'L_E + L_E L_E')B_E = O(e^{-2\eta_* x}).
\]
Here the order of multiplication is preserved, and no assumption is made that \(L_E\) commutes with its derivative. Substituting into \eqref{M22} yields
\begin{equation*}
U_\mathbf{a}(x) = O(e^{-\eta_* x}), \quad U_\mathbf{a}'(x) = O(e^{-\eta_* x}) \quad (x \to +\infty).
\end{equation*}
\end{proof}
\begin{lemma}\label{LE5}
Let
\begin{equation}
g_i = g_{i,1} + g_{i,3} + g_{i,5} + g_{i,7}, \quad f = 1 + f_2 + f_4 + f_6 + f_8, \quad s = s_2 + s_4 + s_6, i=1,0,-1,
\end{equation}
where $g_{i,1},g_{i,3},g_{i,5},g_{i,7},f_2,f_4,f_6,f_8,s_2,s_4,s_6$ are given in Lemma \ref{LE1}. Then they are identical to \eqref{M4}--\eqref{M12}, and satisfy five bilinear equations \eqref{eq9.1d} for any real \(\mathbf{a}\).
\end{lemma}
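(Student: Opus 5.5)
The proof has two parts. First we identify the truncated cascade of Lemma \ref{LE1} with the determinant objects \eqref{M4}--\eqref{M12}. Then we verify the five bilinear equations \eqref{eq9.1d} directly for the determinant objects, for every real $\mathbf{a}$.

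For the identification, the plan is to expand $f=\det M$ and $G=R\,\mathrm{adj}(M)\mathcal{A}R^T$ in powers of the amplitudes. Replacing $\mathcal{A}$ by $X\mathcal{A}$ gives $M=I_4+cX^2\mathcal{A}C\mathcal{A}C$. Hence $\det M$ is a polynomial of degree at most $8$ in $X$ containing only even powers, and $\mathrm{adj}(M)\mathcal{A}$ contains only odd powers $X,X^3,X^5,X^7$, because $\mathrm{adj}(M)$ has entries of degree $\le 6$ in $X$. Likewise $s=-2\det\mathcal{B}$ contains only the even powers $X^2,\dots,X^6$; the factor $\det U_\mathbf{a}$ in \eqref{M11} lowers the top degree. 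This explains the truncation pattern in \eqref{L1}. To match coefficients, we use the expansion $\det(I+cK)=1+c\,\mathrm{tr}K+\dots$ with $K=\mathcal{A}C\mathcal{A}C$, which has principal-minor sums as coefficients. For instance, $f_2=c\,\mathrm{tr}(\mathcal{A}C\mathcal{A}C)$, and reading off the entries of \eqref{T2} gives exactly \eqref{C3}. The cascade in Lemma \ref{LE1} determines each coefficient uniquely once the leading data $g_{i1}=a_ie^{\eta_ix}$ and the normalization (no pure homogeneous solutions, decay at $-\infty$) are fixed. So it suffices to show that the determinant objects satisfy the same cascade with the same normalization. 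This reduces the identification to the second part.

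For the second part, we avoid expanding the determinants. We work with $U=U_\mathbf{a}=RM^{-1}\mathcal{A}R^T$ on the set where $M$ is invertible, which by Lemma \ref{LE2} is all of $\mathbb{R}$. The key structural facts are the following.
\begin{enumerate}
\item $\mathcal{A}'=\mathcal{A}\Lambda=\Lambda\mathcal{A}$ with $\Lambda=\mathrm{diag}(\eta_1,\eta_0,\eta_0,\eta_{-1})$.
\item $C$ solves the Sylvester equation $\Lambda C+C\Lambda=r r^T$ blockwise. More precisely, $\Lambda C+C\Lambda=R^TR$, which follows from $C=\int_{-\infty}^0\Phi^T\Phi\,dt$.
\end{enumerate}
From these we get $M'=c(\mathcal{A}C\mathcal{A}C)'$, and the Sylvester identity turns $M'$ into a rank-two correction involving $R^T R$. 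This is the standard Gram/Cauchy-matrix mechanism used for matrix NLS solitons. Differentiating $M^{-1}$ twice and using $\Lambda^2$, whose blocks are related through $\eta_1^2+\eta_{-1}^2=2\eta_0^2$, we aim to show
\[
U''+\mu U+\tfrac{\lambda}{2}(\sigma_3U+U\sigma_3)+2cU^3=0.
\]
This is equivalent to \eqref{eq9.4} with $c_0=c_1=c$. Once the matrix equation holds, \eqref{eq9.1d} follows. The last equation of \eqref{eq9.1d} is the algebraic identity \eqref{M12}, already established. For the $f$-equation, we prove $(\log f)''=c\,\mathrm{tr}(U^2)=c(u_1^2+u_0^2+u_{-1}^2)$ via Jacobi's formula $(\log\det M)'=\mathrm{tr}(M^{-1}M')$ together with the Sylvester identity, which makes this trace collapse to $\mathrm{tr}(U^2)$ after one more derivative. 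The three $g$-equations are then equivalent to the matrix equation after multiplying by $f^2$ and substituting $sf=g_0^2-2g_1g_{-1}$, reversing the passage from \eqref{eq9.1b} to \eqref{eq9.1d}.

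The main obstacle is the second-derivative computation of $RM^{-1}\mathcal{A}R^T$. Here $R^TR$ is not a scalar, and $\Lambda$ does not commute with $R$. So the cubic term $2cU^3=2c\,(RM^{-1}\mathcal{A}R^T)^3$ must be produced from products like $M^{-1}\mathcal{A}R^TRM^{-1}\mathcal{A}\cdots$. We would first try the substitution $N=M^{-1}\mathcal{A}$, for which $N=\mathcal{A}(I+cC\mathcal{A}C\mathcal{A})^{-1}$ is symmetric up to conjugation by $C$, and then derive a Riccati-type equation for $N$, namely $N'=\Lambda N-cN C\cdots$, using $\Lambda C+C\Lambda=R^TR$. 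If this becomes unwieldy, the fallback is a direct verification: the identification reduces everything to finitely many exponential monomials, and the residuals at orders $X^{1},\dots,X^{15}$, $X^{16}$ and $X^{14}$ can be checked by comparing the coefficient of each exponent $e^{(k_1\eta_1+k_0\eta_0+k_{-1}\eta_{-1})x}$ symbolically under $\eta_1^2+\eta_{-1}^2=2\eta_0^2$.
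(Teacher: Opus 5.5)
Your proposal has a genuine gap, and it sits exactly where your architecture puts all the weight. You reduce the identification with Lemma~\ref{LE1} to cascade uniqueness, so both halves of the lemma depend on showing that $U_{\mathbf a}=RM^{-1}\mathcal{A}R^T$ solves $U''+\mu U+\frac{\lambda}{2}(\sigma_3U+U\sigma_3)+2cU^3=0$. That step is only announced: the Riccati equation is left as $N'=\Lambda N-cNC\cdots$, and the fallback is the unperformed check of all residual coefficients up to $X^{15}$.

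The first-order part does close. Set $T:=M^{-1}\mathcal{A}=(\mathcal{A}^{-1}+cC\mathcal{A}C)^{-1}$, $W:=\mathcal{A}CT$ (the $P_E$ of Lemma~\ref{lem:derivative-identity}) and $J:=R^TR=\Lambda C+C\Lambda$. Then $T'=T\Lambda-cTJW$ and $W'=TJT$. Hence $U'=RT\Lambda R^T-cU\,RWR^T$, $(RWR^T)'=U^2$ and $(\log f)'=c\operatorname{tr}(RWR^T)$, so the $f$-equation follows as you say.

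The trouble is the second derivative, which produces $R\Lambda T\Lambda R^T$. With $\Theta:=\operatorname{diag}(\eta_1,\eta_{-1})$ one has $R\Lambda\neq\Theta R$ as soon as $\lambda\neq0$. This term therefore involves individual middle entries of the dense matrix $T$ (such as $T_{12}$, $T_{22}$), which are not determined by $U$, $U'$ and $RWR^T$. Equivalently, consider the relation $R\Lambda^2YR^T+RY\Lambda^2R^T=\Theta^2RYR^T+RYR^T\Theta^2$, which is what converts $\Lambda^2$ into the physical linear part. It holds for $Y=\mathcal{A}$, by sparsity and $\eta_1^2+\eta_{-1}^2=2\eta_0^2$. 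It fails for $Y=T$ already at cubic order in $\mathbf a$: its $(1,1)$ defect is $(\eta_0^2-\eta_1^2)(T_{12}+T_{21}+2T_{22})\neq0$. That defect would have to be cancelled by the cubic terms through some further identity, and you do not supply one.

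An extra ingredient of this kind is unavoidable. The identities $\Lambda C+C\Lambda=R^TR$ and $\mathcal{A}'=\Lambda\mathcal{A}=\mathcal{A}\Lambda$, and with them the $f$-equation, hold for arbitrary positive $\eta_1,\eta_0,\eta_{-1}$. The $g_1$-equation does not:
\begin{itemize}
\item At order $X^3$ the determinant gives the coefficient $-ca_0^2a_{-1}/(2(\eta_0+\eta_{-1})^2)$ of $e^{(2\eta_0+\eta_{-1})x}$ in $g_{1,3}$.
\item For generic rates, the only source of this exponential on the right of \eqref{A3} is $-cs_2g_{-1,1}$, which contributes $-ca_0^2a_{-1}$.
\item The two balance iff $(2\eta_0+\eta_{-1})^2-\eta_1^2=2(\eta_0+\eta_{-1})^2$, i.e.\ iff $\eta_1^2+\eta_{-1}^2=2\eta_0^2$.
\end{itemize}
So the classical Gram--Cauchy mechanism, which closes when each row of $R$ carries a single rate ($\lambda=0$), cannot by itself yield the three $g$-equations. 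The spectral relation must enter the nonlinear balance, and your proposal gives no mechanism for that.

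The identification does not need any of this. The paper proves it purely algebraically:
\begin{itemize}
\item it expands $\det(I_4+cX^2P^2)$, $P=\mathcal{A}C$, through the characteristic polynomial of $P$ and the relations $E_1=\sigma_1^2-2\sigma_2,\dots$;
\item it expands the adjugate through $Q_m=F_mI_4-cP^2Q_{m-1}$;
\item it obtains $s_{2k}$ recursively from $sf=g_0^2-2g_1g_{-1}$;
\item it then matches term by term with Lemma~\ref{LE1}.
\end{itemize}
Your computation $f_2=c\operatorname{tr}(\mathcal{A}C\mathcal{A}C)$ is the first step of exactly this. The bilinear equations are then inherited from the cascade of Lemma~\ref{LE1}. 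Completing your second-derivative computation would give a more conceptual proof than the paper's, since it would verify all truncation residuals at once. As written, however, neither half of the lemma is established.
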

\begin{proof}
Set
\begin{equation*}
 z_1=e^{\eta_1x},\
 z_0=e^{\eta_0x},\
 z_{-1}=e^{\eta_{-1}x},
\end{equation*}
and write
\begin{equation}
 p=a_1z_1,\
 q=\frac{a_0z_0}{\sqrt{2}},\
 r=a_{-1}z_{-1}.
 \label{eq:app-en-pqr}
\end{equation}
For the five distinct entries of~$C$, introduce
\begin{equation*}
 \alpha=\frac{1}{2\eta_1},\
 \beta=\frac{1}{\eta_1+\eta_0},\
 \gamma=\frac{1}{2\eta_0},\
 \delta=\frac{1}{\eta_0+\eta_{-1}},\
 \varepsilon=\frac{1}{2\eta_{-1}}.
\end{equation*}
Then
\begin{equation}
 C=
 \begin{pmatrix}
  \alpha&\beta&0&0\\
  \beta&\gamma&0&0\\
  0&0&\gamma&\delta\\
  0&0&\delta&\varepsilon
 \end{pmatrix},
 \
 A=
 \begin{pmatrix}
  p&0&0&0\\
  0&0&q&0\\
  0&q&0&0\\
  0&0&0&r
 \end{pmatrix}.
 \label{eq:app-en-C-A}
\end{equation}
Define the determinants of the two second-order Gram blocks by
\begin{equation*}
 d_+:=\alpha\gamma-\beta^2,
 \
 d_-:=\gamma\varepsilon-\delta^2.
\end{equation*}
A direct reduction to common denominators gives
\begin{equation}
 d_+=
 \frac{(\eta_0-\eta_1)^2}
 {4\eta_1\eta_0(\eta_1+\eta_0)^2},
 \
 d_-=
 \frac{(\eta_0-\eta_{-1})^2}
 {4\eta_0\eta_{-1}(\eta_0+\eta_{-1})^2}.
 \label{eq:app-en-dpm-explicit}
\end{equation}
To compare homogeneous degrees in the amplitudes, replace~$A$ by~$XA$ and
define
\begin{equation*}
 M(X)=I_4+cX^2ACAC,
 \
 \widetilde f(X)=\det M(X),
\end{equation*}
together with
\begin{equation}
 \widetilde G(X)
 =R\operatorname{adj}(M(X))(XA)R^T
 =
 \begin{pmatrix}
  \widetilde g_1(X)&\widetilde g_0(X)/\sqrt{2}\\
  \widetilde g_0(X)/\sqrt{2}&\widetilde g_{-1}(X)
 \end{pmatrix}.
 \label{eq:app-en-scaled-G}
\end{equation}
Also set
\begin{equation}
 \widetilde B(X)=
 \begin{pmatrix}
  M(X)&XA R^T\\
  R&0_{2\times2}
 \end{pmatrix},
 \
 \widetilde s(X)=-2\det\widetilde B(X).
 \label{eq:app-en-scaled-B-s}
\end{equation}
We shall prove
\begin{equation*}
 \begin{aligned}
 \widetilde f(X)
 &=1+f_2X^2+f_4X^4+f_6X^6+f_8X^8,\\
 \widetilde g_i(X)
 &=g_{i,1}X+g_{i,3}X^3+g_{i,5}X^5+g_{i,7}X^7,
 \ i=1,0,-1,\\
 \widetilde s(X)
 &=s_2X^2+s_4X^4+s_6X^6,
 \end{aligned}
\end{equation*}
where the coefficients on the right-hand side are precisely the explicit
cascade functions listed in Lemma \ref{LE1}. Let $P:=AC.$
\eqref{eq:app-en-C-A} gives
\begin{equation}
 P=
 \begin{pmatrix}
  \alpha p&\beta p&0&0\\
  0&0&\gamma q&\delta q\\
  \beta q&\gamma q&0&0\\
  0&0&\delta r&\varepsilon r
 \end{pmatrix},
 \
 M(X)=I_4+cX^2P^2.
 \label{eq:app-en-P-explicit}
\end{equation}
Expanding the fourth-order determinant according to the sparse structure in
\eqref{eq:app-en-P-explicit}, we obtain
\begin{equation}
 \det(\lambda I_4-P)
 =\lambda^4-\sigma_1\lambda^3+\sigma_2\lambda^2
 -\sigma_3\lambda+\sigma_4,
 \label{eq:app-en-charpoly-P}
\end{equation}
where
\begin{equation}
 \begin{aligned}
 \sigma_1=\alpha p+\varepsilon r,\ \sigma_2=\alpha\varepsilon pr-\gamma^2q^2,\ \sigma_3=-\gamma q^2(pd_++rd_-),\ \sigma_4=-pq^2r\,d_+d_-.
 \end{aligned}
 \label{eq:app-en-sigma}
\end{equation}
Indeed, the coefficient of~$\lambda$ in
\eqref{eq:app-en-charpoly-P} is
$
 \gamma q^2
 \bigl[p(\alpha\gamma-\beta^2)
       +r(\gamma\varepsilon-\delta^2)\bigr]
 =-\sigma_3,
$
whereas its constant term is
$
 pq^2r(\alpha\gamma-\beta^2)
       (\delta^2-\gamma\varepsilon)
 =-pq^2r\,d_+d_-=\sigma_4.
$
Let~$\rho_1,\ldots,\rho_4$ denote the four eigenvalues of~$P$. Then
\begin{equation*}
 \widetilde f(X)
 =\prod_{j=1}^4(1+cX^2\rho_j^2).
\end{equation*}
Let~$E_k$ be the $k$-th elementary symmetric polynomial in
$\rho_1^2,\ldots,\rho_4^2$. Newton's identities, or a direct expansion give that
\begin{equation}
 \begin{aligned}
 E_1&=\sigma_1^2-2\sigma_2,\ E_2=\sigma_2^2-2\sigma_1\sigma_3+2\sigma_4,\ E_3=\sigma_3^2-2\sigma_2\sigma_4,\ E_4=\sigma_4^2.
 \end{aligned}
 \label{eq:app-en-E-from-sigma}
\end{equation}
Consequently,
\begin{equation}
 \widetilde f(X)
 =1+cE_1X^2+c^2E_2X^4+c^3E_3X^6+c^4E_4X^8.
 \label{eq:app-en-f-E}
\end{equation}
Substituting \eqref{eq:app-en-sigma} into
\eqref{eq:app-en-E-from-sigma} and using
$d_+=\alpha\gamma-\beta^2$ and
$d_-=\gamma\varepsilon-\delta^2$, we find
\begin{equation}\label{eq:app-en-E1}
 \begin{aligned}
 &E_1=\alpha^2p^2+2\gamma^2q^2+\varepsilon^2r^2, \ E_2={}
 2\alpha\gamma d_+p^2q^2
 +\alpha^2\varepsilon^2p^2r^2
 -2\beta^2\delta^2pq^2r+\gamma^4q^4
 +2\gamma\varepsilon d_-q^2r^2,\\
  &E_3=
 \gamma^2p^2q^4d_+^2
 +2\alpha\varepsilon p^2q^2r^2d_+d_-
 +\gamma^2q^4r^2d_-^2,\ E_4=p^2q^4r^2d_+^2d_-^2.
\end{aligned}
 \end{equation}
The only slightly longer simplification in \eqref{eq:app-en-E1} concerns
the coefficient of~$pq^2r$. Before simplification, it is
\begin{align*}
 -2\alpha\varepsilon\gamma^2
 +2\gamma(\alpha d_-+\varepsilon d_+)-2d_+d_-&=-2\alpha\varepsilon\gamma^2
 +2\gamma\bigl[\alpha(\gamma\varepsilon-\delta^2)
 +\varepsilon(\alpha\gamma-\beta^2)\bigr]\\
 &\quad
 -2(\alpha\gamma-\beta^2)(\gamma\varepsilon-\delta^2)
 =-2\beta^2\delta^2.
\end{align*}
Finally, substitute \eqref{eq:app-en-pqr} and
\eqref{eq:app-en-dpm-explicit}, we have
\begin{align*}
 &2c^2\alpha\gamma d_+p^2q^2
 =
 \frac{c^2a_0^2a_1^2(\eta_0-\eta_1)^2}
 {16\eta_0^2\eta_1^2(\eta_0+\eta_1)^2}
 e^{(2\eta_1+2\eta_0)x},\\
 &c^4E_4
 =
 \frac{c^4a_0^4a_1^2a_{-1}^2
 (\eta_0-\eta_1)^4(\eta_0-\eta_{-1})^4}
 {1024\eta_0^4\eta_1^2\eta_{-1}^2
 (\eta_0+\eta_1)^4(\eta_0+\eta_{-1})^4}
 e^{(2\eta_1+4\eta_0+2\eta_{-1})x}.
\end{align*}
Applying the same substitutions to every monomial in
\eqref{eq:app-en-E1} gives, term by term,
$f_{2}$-$f_{8}$ of Lemma \ref{LE1}. Hence
\begin{equation}
 \widetilde f_{2k}=f_{2k},
 \quad 1\leq k\leq4.
 \label{eq:app-en-f-coeff-equality}
\end{equation}

Next, we prove
\begin{equation*}
 \widetilde g_{i,2k+1}=g_{i,2k+1},
 \ 0\leq k\leq3,
 \quad i=1,0,-1.
\end{equation*}
Set
\begin{equation*}
 K:=cP^2,
 \
 F_k:=c^kE_k.
\end{equation*}
Comparing equal powers of~$t$ in
$
 (I_4+tK)\operatorname{adj}(I_4+tK)
 =\det(I_4+tK)I_4
$
gives the exact fourth-order adjugate expansion
\begin{equation*}
 \operatorname{adj}(I_4+tK)=Q_0+tQ_1+t^2Q_2+t^3Q_3,
\end{equation*}
where
\begin{equation*}
 \begin{aligned}
 Q_0&=I_4,\ Q_1=F_1I_4-K,\ Q_2=F_2I_4-F_1K+K^2,\ Q_3=F_3I_4-F_2K+F_1K^2-K^3.
 \end{aligned}
\end{equation*}
Equivalently, $Q_m=F_mI_4-KQ_{m-1}$. It follows from
\eqref{eq:app-en-scaled-G} that
\begin{equation}
 \widetilde G_{2m+1}=RQ_mAR^T,
 \quad 0\leq m\leq3.
 \label{eq:app-en-G-coeff-Q}
\end{equation}
The complete sparse-matrix products in\eqref{eq:app-en-G-coeff-Q} are
listed below.
\begin{equation}\label{eq:app-en-g1-1}
\begin{aligned}
\widetilde g_{1,1}&=p, \
\widetilde g_{1,3}=c\bigl[2\gamma(\gamma-\beta)pq^2-\delta^2q^2r+\varepsilon^2pr^2\bigr], \ \widetilde g_{1,7}=c^3(\beta-\gamma)^2d_-^2pq^4r^2, \\
\widetilde g_{1,5}&=c^2\bigl[-(\alpha-\beta)^2\delta^2p^2q^2r+\gamma^2(\gamma-\beta)^2pq^4+2\varepsilon(\gamma-\beta)d_-pq^2r^2\bigr],\\
\end{aligned}
\end{equation}
Since~$G_{12}=g_0/\sqrt{2}$, we have
\begin{equation*}
\begin{aligned}
 \widetilde g_{0,1}&=\sqrt{2}q,\  \widetilde g_{0,7}=\sqrt{2}c^3(\alpha-\beta)(\varepsilon-\delta)d_+d_-p^2q^3r^2,\\
\widetilde g_{0,3}&=\sqrt{2}cq\bigl[\alpha(\alpha-\beta)p^2-\beta\delta pr+\gamma^2q^2+\varepsilon(\varepsilon-\delta)r^2\bigr],\\
 \widetilde g_{0,5}&=\sqrt{2}c^2\bigl[\gamma d_+(\alpha-\beta)p^2q^3+\alpha\varepsilon(\alpha-\beta)(\varepsilon-\delta)p^2qr^2-\beta\delta(\beta-\gamma)(\delta-\gamma)pq^3r+\gamma d_-(\varepsilon-\delta)q^3r^2\bigr],\\
\end{aligned}
\end{equation*}
and
\begin{equation}\label{eq:app-en-gm-1}
\begin{aligned}
 \widetilde g_{-1,1}&=r,\  \widetilde g_{-1,7}=c^3(\delta-\gamma)^2d_+^2p^2q^4r,\ \widetilde g_{-1,3}=c\bigl[\alpha^2p^2r-\beta^2pq^2+2\gamma(\gamma-\delta)q^2r\bigr],\\
\widetilde g_{-1,5}&=c^2\bigl[2\alpha d_+(\gamma-\delta)p^2q^2r-\beta^2(\varepsilon-\delta)^2pq^2r^2+\gamma^2(\gamma-\delta)^2q^4r\bigr].
\end{aligned}
\end{equation}
To convert these compact expressions into the form used in Lemma \ref{LE1}, it is
enough to use
\begin{equation}
 \begin{aligned}
 \alpha-\beta
 &=\frac{\eta_0-\eta_1}
 {2\eta_1(\eta_0+\eta_1)},
 &
 \beta-\gamma
 &=\frac{\eta_0-\eta_1}
 {2\eta_0(\eta_0+\eta_1)},\\
 \varepsilon-\delta
 &=\frac{\eta_0-\eta_{-1}}
 {2\eta_{-1}(\eta_0+\eta_{-1})},
 &
 \delta-\gamma
 &=\frac{\eta_0-\eta_{-1}}
 {2\eta_0(\eta_0+\eta_{-1})}.
 \end{aligned}
 \label{eq:app-en-difference-dictionary}
\end{equation}
The first two types of terms in \eqref{eq:app-en-g1-1} give
\begin{align*}
 2c\gamma(\gamma-\beta)pq^2
 &=-\frac{ca_0^2a_1(\eta_0-\eta_1)}
 {4\eta_0^2(\eta_0+\eta_1)}
 e^{(\eta_1+2\eta_0)x},\\
 -c\delta^2q^2r
 &=-\frac{ca_0^2a_{-1}}
 {2(\eta_0+\eta_{-1})^2}
 e^{(2\eta_0+\eta_{-1})x},
\end{align*}
which are the corresponding terms in \eqref{B1}.
Because \eqref{eq:app-en-g1-1}--\eqref{eq:app-en-gm-1} list every
monomial in the adjugate expansion, term-by-term application of
\eqref{eq:app-en-pqr}, \eqref{eq:app-en-dpm-explicit}, and
\eqref{eq:app-en-difference-dictionary} gives all coefficients in
Lemma \ref{LE1}. Therefore,
\begin{equation}
 \widetilde g_{i,2k+1}=g_{i,2k+1},
 \quad 0\leq k\leq3,
 \quad i=1,0,-1.
 \label{eq:app-en-g-coeff-equality}
\end{equation}
Finally, we show that
\begin{equation*}
 \widetilde s_{2k}=s_{2k},
 \ 1\leq k\leq3.
\end{equation*}
At points where~$M(X)$ is invertible, the Schur complement applied to
\eqref{eq:app-en-scaled-B-s} yields
\begin{equation*}
 \det\widetilde B(X)
 =\det M(X)\det\bigl(-RM(X)^{-1}(XA)R^T\bigr).
\end{equation*}
Write
\[
 \widetilde U(X)=RM(X)^{-1}(XA)R^T.
\]
Since~$\widetilde U(X)$ is a second-order matrix,
$\det(-\widetilde U)=\det\widetilde U$, and hence
\begin{equation}
 \widetilde s(X)=-2\widetilde f(X)\det\widetilde U(X).
 \label{eq:app-en-s-det-U}
\end{equation}
On the other hand,
\[
 \widetilde G(X)=\widetilde f(X)\widetilde U(X),
 \
 \det\widetilde G(X)=\widetilde f(X)^2\det\widetilde U(X).
\]
Multiplying \eqref{eq:app-en-s-det-U} by~$\widetilde f(X)$ gives the
polynomial identity
\begin{equation}
 \widetilde s(X)\widetilde f(X)
 =-2\det\widetilde G(X)
 =\widetilde g_0(X)^2
 -2\widetilde g_1(X)\widetilde g_{-1}(X).
 \label{eq:app-en-sf-identity}
\end{equation}
The identity is first obtained where~$M(X)$ is invertible. Both sides are
polynomials in the matrix entries, however, so it holds for all parameter
values.

Since~$\widetilde f_0=1$, \eqref{eq:app-en-sf-identity} determines every
$\widetilde s_{2m}$ uniquely and successively. Since
\begin{equation}\label{eq:app-en-s2}
\begin{aligned}
 \widetilde s_2&=\widetilde g_{0,1}^2-2\widetilde g_{1,1}\widetilde g_{-1,1}=2q^2-2pr,\\
 \widetilde s_4
 &={}2\widetilde g_{0,1}\widetilde g_{0,3}
 -2\bigl(
 \widetilde g_{1,1}\widetilde g_{-1,3}
 +\widetilde g_{1,3}\widetilde g_{-1,1}
 \bigr)
 -\widetilde s_2\widetilde f_2
 \\
 &=2cq^2\bigl[
 p^2(\alpha-\beta)^2
 -2pr(\beta-\gamma)(\delta-\gamma)
 +r^2(\varepsilon-\delta)^2
 \bigr],\\
 \widetilde s_6
 &={}2\widetilde g_{0,1}\widetilde g_{0,5}
 +\widetilde g_{0,3}^2
 -2\bigl(
 \widetilde g_{1,1}\widetilde g_{-1,5}
 +\widetilde g_{1,3}\widetilde g_{-1,3}
 +\widetilde g_{1,5}\widetilde g_{-1,1}
 \bigr)
 -\widetilde s_2\widetilde f_4
 -\widetilde s_4\widetilde f_2\\
 &=2c^2q^2\bigl[
 p^2r^2(\alpha-\beta)^2(\varepsilon-\delta)^2
 -pq^2r(\beta-\gamma)^2(\delta-\gamma)^2
 \bigr].
 \end{aligned}
\end{equation}
Returning to the original variables, \eqref{eq:app-en-s2}
become $s_{2}-s_{6}$ in Lemma \ref{LE1}.
Thus
\begin{equation} \label{eq:app-en-s-coeff-equality}
 \widetilde s_{2k}=s_{2k},
 \ 1\leq k\leq3.
\end{equation}
The preceding finite matrix expansions already prove equality of all
coefficients directly. We now explain why the same matching can also be viewed
as order-by-order uniqueness for the cascade equations.

Suppose that two collections of coefficients agree at all lower orders and
both satisfy \eqref{eq9.1d} at the current order. For a denominator
coefficient, their difference~$h=\widetilde f_{2m}-f_{2m}$ satisfies
\begin{equation}
 h''=0.
 \label{eq:app-en-f-difference-ode}
\end{equation}
For every higher-order denominator, the cascade construction imposes the
left-end normalization
\[
 f_{2m}(x),\ f_{2m}'(x)\longrightarrow0
 \ (x\to-\infty).
\]
This excludes both the constant and linear solutions of
\eqref{eq:app-en-f-difference-ode}, so~$h=0$.

For the auxiliary coefficient~$s_{2m}$, \eqref{eq9.1d} is an algebraic recurrence.
At the current order its left-hand side contains
$s_{2m}f_0=s_{2m}$, while every other term contains only lower-order
coefficients. Since~$f_0=1$, equality at all lower orders immediately implies
equality of the current~$s_{2m}$ coefficients.

For any~$i\in\{1,0,-1\}$, the difference between two numerator coefficients,
$h_i=\widetilde g_{i,2m+1}-g_{i,2m+1}$, satisfies
\begin{equation*}
 h_i''-\eta_i^2h_i=0.
\end{equation*}
Consequently,
\[
 h_i=A_ie^{\eta_ix}+B_ie^{-\eta_ix}.
\]
Left-end decay excludes~$B_ie^{-\eta_ix}$. The convention that no new
leading-order homogeneous mode is introduced at higher cascade orders excludes
$A_ie^{\eta_ix}$. Therefore~$h_i=0$. Induction now shows that the cascade
coefficients with the same first-order leading terms and the same construction
normalization are unique at every order. Combining \eqref{eq:app-en-f-coeff-equality},
\eqref{eq:app-en-g-coeff-equality}, and
\eqref{eq:app-en-s-coeff-equality}, we finally obtain
\begin{equation*}
 \widetilde f=f,
 \
 \widetilde g_i=g_i\ (i=1,0,-1),
 \
 \widetilde s=s.
\end{equation*}
In particular, the fourth-order
determinant representation is not a new candidate family: it is an exact
finite encoding of the finite cascade functions constructed in Lemma \ref{LE1}.
\end{proof}
\begin{remark}
The algebraic matching between the determinant coefficients and the explicit
coefficients in Section~9 uses only the definitions of~$A$ and~$C$; it does not
require the spectral relation
$\eta_1^2+\eta_{-1}^2=2\eta_0^2$. That spectral relation is needed to prove
that these finite functions satisfy the bilinear cascade and all of its
full-order residual identities. It is not needed for the purely algebraic
identity between the determinant expansion and the previously obtained
explicit coefficients.
\end{remark}
\begin{lemma}\label{LE6}
For any \(a \in \mathbb{R}^3\), \eqref{T1} is a globally smooth real-valued \(H^1\) solution of the original system, and its left amplitude is exactly the given \eqref{eq9.4}.
\end{lemma}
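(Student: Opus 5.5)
We prove Lemma \ref{LE6} by combining the algebraic identities of Section \ref{S2} with the asymptotic lemmas. The plan has four steps.

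\textbf{Step 1: smoothness and reality.} By Lemma \ref{LE2}, $f=\det M\ge1$ on $\mathbb R$. Every entry of $M$, $\operatorname{adj}(M)$ and $\mathcal A$ is a finite real combination of real exponentials. Hence $U_\mathbf{a}=G/f=R\,\operatorname{adj}(M)\mathcal A R^T/f$ is real-analytic and real-valued on all of $\mathbb R$, so $u_1,u_0,u_{-1}$ are globally smooth real functions.

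\textbf{Step 2: the equation.} By Lemma \ref{LE5}, the determinant quantities $f,g_i,s$ coincide with the finite cascade sums of Lemma \ref{LE1}. These sums satisfy the five bilinear equations \eqref{eq9.1d} with $c_0=c_1=c$. We then reverse the derivation of \eqref{eq9.1d} from \eqref{eq9.4}. Since $f\neq0$, we may use the fourth equation of \eqref{eq9.1d} to replace $2(f')^2-2ff''$ by $-(c_0+c_1)\sum g_j^2$, and the fifth to replace $s$ by $(g_0^2-2g_1g_{-1})/f$. Dividing by $f^3$ then returns exactly \eqref{eq9.1c}, hence \eqref{eq9.1b}, hence \eqref{eq9.4} for $u_i=g_i/f$.

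The delicate point is justifying that the finite sums really satisfy \eqref{eq9.1d} to all orders, i.e.\ the vanishing of the residual coefficients up to $X^{16}$. The paper's outline asserts this truncation. If it had to be verified independently, I would not expand the series. Instead I would check the equation directly at the matrix level: writing $P=\mathcal A C$, one has $\mathcal A'=\operatorname{diag}$-type$\cdot\mathcal A$ and the Gram identity $\Lambda C+C\Lambda=rr^T$-type (Lyapunov/Cauchy-matrix structure, coming from $C=\int_{-\infty}^0\Phi^T\Phi$). These give a closed Riccati-type equation for $U_\mathbf{a}$, in the standard manner for Cauchy-matrix soliton formulas. The spectral relation $\eta_1^2+\eta_{-1}^2=2\eta_0^2$ enters precisely here, through the off-diagonal placement of $a_0$.

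\textbf{Step 3: $H^1$ membership.} Lemma \ref{LE3} gives $u_i,u_i'=O(e^{\eta_i x})$ as $x\to-\infty$. Lemma \ref{LE4} gives $U_\mathbf{a},U_\mathbf{a}'=O(e^{-\eta_* x})$ as $x\to+\infty$. Together with continuity on compact intervals, this yields $u_i,u_i'\in L^2(\mathbb R)$, so $u_i\in H^1(\mathbb R)$.

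\textbf{Step 4: left amplitudes.} Lemma \ref{LE3} again gives $e^{-\eta_i x}u_i(x)=a_i+O(e^{2\eta_* x})\to a_i$. This is exactly \eqref{H1}, and it also covers $a_i=0$.

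The main obstacle is Step 2, namely certifying that the truncated cascade has vanishing residuals at every order. I would rely on Lemma \ref{LE5} together with the order-by-order uniqueness argument given there, and fall back on the matrix Riccati verification if needed.
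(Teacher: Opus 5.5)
Your proposal is correct and follows essentially the same route as the paper. Lemma~\ref{LE2} rules out poles, Lemma~\ref{LE5} supplies the bilinear system and hence the original equations, Lemmas~\ref{LE3} and~\ref{LE4} give square-integrability of $u_i$ and $u_i'$ at both ends, and Lemma~\ref{LE3} recovers the left amplitudes. Your explicit reversal of the substitution $u_i=g_i/f$, $sf=g_0^2-2g_1g_{-1}$ (legitimate since $f\ge1$), and your remark on the exactness of the truncation, spell out what the paper's one-line appeal to Lemma~\ref{LE5} leaves implicit.
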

\begin{proof}
The positivity Lemma \ref{LE2} guarantees no poles on the entire axis. Lemma \ref{LE5} guarantee that the original equation is satisfied; Lemma \ref{LE3} and Lemma \ref{LE4} guarantee that the function and its first derivative are square-integrable at both ends. Smoothness on compact intervals supplements the global \(H^1\) property. The left amplitude is immediately recovered from Lemma \ref{LE3}.
\end{proof}
\section{Proof of Theorem \ref{The1}}\label{S3}
From this section onwards, we turn to the exhaustion proof. The solutions to be studied below are arbitrary real-valued \(H^1\) solutions of the original system, and we do not assume they come from finite Hirota expansions or determinant formulas. In the following, \(u = (u_1, u_0, u_{-1})\) is an arbitrary real-valued $(H^1(\mathbb{R}))^{3}$ solution, and we do not use the conclusion that it has a determinant form at all.
\begin{lemma}\label{LE8}
Every real-valued \(H^1(\mathbb{R})^3\) solution has a unique real triple \(\mathbf{a}\), and
\begin{equation*}
\begin{aligned}
u_i(x) &= a_i e^{\eta_i x} + O(e^{(\eta_i + 2\eta_*)x}), \\
u_i'(x) &= \eta_i a_i e^{\eta_i x} + O(e^{(\eta_i + 2\eta_*)x}).
\end{aligned}
\end{equation*}
\end{lemma}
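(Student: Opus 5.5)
The plan has three parts: qualitative decay of the solution, leading asymptotics from Lemma \ref{LE7}, and a bootstrap through the Volterra representation to obtain the $2\eta_*$ remainder. Throughout we work with $c_0=c_1=c$ and $\eta_i>0$. Each equation is rewritten in the form $u_i''-\eta_i^2u_i=q_i u_i+h_i$.

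\textbf{Step 1: decay.} Since $u\in H^1(\mathbb R)^3$, each $u_j$ is bounded, continuous and tends to $0$ at $\pm\infty$. We then rewrite the equations.

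For the middle component,
\[
u_0''-\eta_0^2u_0=q_0u_0,\qquad q_0=-c\bigl(u_0^2+2u_1^2+2u_{-1}^2+2u_1u_{-1}\bigr).
\]
Here $q_0\in L^1$ because each $u_j\in L^2$.

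For the side components, we take $q_{\pm1}=-2c(u_{\pm1}^2+u_0^2)$ and forcing terms $h_{\pm1}=-cu_{\mp1}u_0^2$. The products $u_1u_{-1}^2$ cancel when $c_0=c_1$, and these coefficients are again in $L^1$.

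Before invoking Lemma \ref{LE7} we need exponential smallness of the forcing. A standard comparison/maximum-principle argument on $(-\infty,b]$ does this. Since $|q_j|\to0$, for any $\delta>0$ the function $w=\sum_j u_j^2$ satisfies $w''\ge (2\eta_*^2-\delta)w$ near $-\infty$. Comparison with $e^{\sqrt{2\eta_*^2-\delta}\,x}$ then gives $|u_j|\le Ce^{(\eta_*-\delta')x}$ as $x\to-\infty$.

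\textbf{Step 2: leading asymptotics.} Apply Lemma \ref{LE7} first to $u_0$ with $h\equiv0$. This produces a unique $a_0$ with $u_0=a_0e^{\eta_0x}+o(e^{\eta_0x})$, and in particular $u_0=O(e^{\eta_0x})$.

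Then $h_{\pm1}=O\bigl(e^{(\eta_*-\delta')x}e^{2\eta_0x}\bigr)$. By \eqref{M14}, $2\eta_0>\eta_{\pm1}$, so $h_{\pm1}=O(e^{(\eta_{\pm1}+\sigma)x})$ for some $\sigma>0$. Lemma \ref{LE7} therefore applies to $u_{\pm1}$ and gives unique $a_{\pm1}$.

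Uniqueness of each $a_i$ is immediate, since $a_i=\lim e^{-\eta_ix}u_i$. As a byproduct, $|u_i|\le Ce^{\eta_ix}$ holds on a left half-line for every $i$, with no division by $a_i$ needed.

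\textbf{Step 3: refinement to $O(e^{(\eta_i+2\eta_*)x})$.} Use the variation-of-constants identity
\[
u_i(x)=a_ie^{\eta_ix}+\frac1{\eta_i}\int_{-\infty}^x\sinh(\eta_i(x-t))F_i(u(t))\,dt,
\]
where $F_i$ is the full cubic term. Justifying this identity is where the leading asymptotics are used: the difference $u_i-a_ie^{\eta_ix}-(\text{integral})$ solves the homogeneous equation and is $o(e^{\eta_ix})$ at $-\infty$, so it vanishes.

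Then estimate $F_i$ monomial by monomial using $|u_j|\le Ce^{\eta_jx}$.
\begin{itemize}
\item The diagonal terms in $F_i$ are $O(e^{(\eta_i+2\eta_*)x})$.
\item The exchange terms are handled by \eqref{M16}. For example, $u_{-1}u_0^2=O(e^{(\eta_{-1}+2\eta_0)x})$ and $\eta_{-1}+2\eta_0-\eta_1\ge2\eta_*$; the term $u_1u_{-1}u_0$ is treated similarly.
\end{itemize}
Hence $|F_i|\le Ce^{(\eta_i+2\eta_*)x}$.

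Plugging this into the kernel bound $|\sinh(\eta_i(x-t))|\le \tfrac12 e^{\eta_i(x-t)}$ gives an integral of order $e^{\eta_ix}\int_{-\infty}^x e^{2\eta_*t}\,dt=O(e^{(\eta_i+2\eta_*)x})$. Differentiating the Volterra formula under the integral, with kernel $\cosh$, gives the same bound for $u_i'$.

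The main obstacle is Step 1: getting an a priori exponential decay rate that is strong enough for the hypothesis of Lemma \ref{LE7} on the side equations. I would try the comparison argument on $\sum u_j^2$ first. If that is delicate, I would iterate instead: use the Volterra form to improve decay from $o(1)$ to a small exponential rate, then to $e^{\eta_0x}$.
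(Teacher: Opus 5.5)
Your proposal is correct and is essentially the paper's proof. Lemma~\ref{LE7} is applied first to the homogeneous $u_0$-equation and then to the side equations with forcing $h_{\pm1}=-cu_{\mp1}u_0^2$. The $O(e^{(\eta_i+2\eta_*)x})$ remainder comes from inserting the monomial exponent bounds \eqref{M14}--\eqref{M16} into the integrated equation. The paper integrates $(e^{\eta_ix}(u_i'-\eta_iu_i))'=e^{\eta_ix}F_i(u)$ twice and then recovers the same Volterra formula via Fubini; this is equivalent to your homogeneous-solution uniqueness argument.

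Your Step~1, which you single out as the main obstacle, is actually unnecessary. Boundedness of $u_{\mp1}$ together with $u_0=O(e^{\eta_0x})$ already gives $h_{\pm1}=O(e^{2\eta_0x})$, and $2\eta_0>\eta_{\pm1}$ by \eqref{M14}. As written, Step~1 is also slightly off: $w''\ge(2\eta_*^2-\delta)w$ only yields $w\le Ce^{\sqrt{2\eta_*^2-\delta}\,x}$, i.e.\ $|u_j|\le Ce^{(\eta_*/\sqrt2-\delta')x}$, not rate $\eta_*-\delta'$. Nothing downstream depends on that rate, so the error is harmless.
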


\begin{proof}
Temporarily denote the cubic term in the original equation as \(F_i(u)\), whose complete expression is given in \begin{equation}\label{M23}
\begin{aligned}
F_1(u) &= -2cu_1^3 - 2cu_0^2u_1 - cu_{-1}u_0^2, \\
F_0(u) &= -cu_0^3 - 2cu_1^2u_0 - 2cu_{-1}^2u_0 - 2cu_1u_{-1}u_0, \\
F_{-1}(u) &= -2cu_{-1}^3 - 2cu_0^2u_{-1} - cu_1u_0^2.
\end{aligned}
\end{equation} The 1D embedding and product estimates give
\[
H^1(\mathbb{R}) \hookrightarrow L^\infty(\mathbb{R}) \cap C_0(\mathbb{R}), \quad \|u_j u_k u_l\|_{L^2} \le \|u_j\|_\infty \|u_k\|_\infty \|u_l\|_{L^2}.
\]
Therefore
\[
u_i'' = \eta_i^2 u_i + F_i(u) \in L^2, \quad u_i \in H^2, \quad u_i' \in H^1 \hookrightarrow C_0.
\]
Repeatedly substituting into the polynomial equation yields
\[
u_i \in C^\infty(\mathbb{R}), \ \lim_{x \to \pm\infty} u_i(x) = \lim_{x \to \pm\infty} u_i'(x) = 0.
\]
The central equation can be written as a homogeneous scalar equation
\begin{equation}\label{M24}
u_0'' - \eta_0^2 u_0 = q_0 u_0, \ q_0 = -cu_0^2 - 2c(u_1^2 + u_{-1}^2 + u_1 u_{-1}).
\end{equation}
Each squared term belongs to \(L^1\), and \(\int |u_1 u_{-1}| \le \|u_1\|_{L^2} \|u_{-1}\|_{L^2}\), and
\[
\|q_0\|_{L^1} \le c\|u_0\|_2^2 + 2c\|u_1\|_2^2 + 2c\|u_{-1}\|_2^2 + 2c\|u_1\|_2 \|u_{-1}\|_2 < \infty.
\]
So \(q_0 \in C(\mathbb{R}) \cap L^1(\mathbb{R})\). By Lemma \ref{LE7}, there exists \(a_0\) such that
\begin{equation}\label{M25}
u_0 = a_0 e^{\eta_0 x} + o(e^{\eta_0 x}), \ u_0' = \eta_0 a_0 e^{\eta_0 x} + o(e^{\eta_0 x}).
\end{equation}
And since the non-homogeneous term of this equation is zero, \(a_0 = 0\) if and only if \(u_0 \equiv 0\).

\noindent Next, write the first and third equations as
\begin{equation}\label{M26}
u_1'' - \eta_1^2 u_1 = q_1 u_1 + h_1, \ q_1 = -2c(u_1^2 + u_0^2), \ h_1 = -cu_1 u_0^2,
\end{equation}
\begin{equation*}
u_{-1}'' - \eta_{-1}^2 u_{-1} = q_{-1} u_{-1} + h_{-1}, \ q_{-1} = -2c(u_{-1}^2 + u_0^2), \ h_{-1} = -cu_1 u_0^2.
\end{equation*}

\noindent The \(q_i\) on both sides are also continuously integrable. Using only the boundedness of the two side components and the just-obtained \(u_0 = O(e^{\eta_0 x})\), we have
\[
|h_1| \le c\|u_1\|_\infty |u_0|^2 = O(e^{2\eta_0 x}), \ |h_{-1}| \le c\|u_1\|_\infty |u_0|^2 = O(e^{2\eta_0 x}).
\]
From \eqref{M13},
\[
2\eta_0 = \eta_1 + \sigma_1 = \eta_{-1} + \sigma_{-1}, \ \sigma_1 := 2\eta_0 - \eta_1 > 0, \quad \sigma_{-1} := 2\eta_0 - \eta_{-1} > 0.
\]
Therefore, Lemma \ref{LE7} can be applied separately. We obtain unique \(a_1, a_{-1}\) satisfying the asymptotic formulas corresponding to \eqref{M25}. The treatment of the two side components only uses the mutual boundedness, without using the not-yet-proven fine decay, so the argument involves no circularity. Nor do we add any ratio assumptions such as \(\eta_{\max} < 3\eta_{\min}\).
Record the complete cubic terms on the right-hand side as \eqref{M23}. So \(u_i'' - \eta_i^2 u_i = F_i(u)\). In the previous subsection, we have given \(u_i = O(e^{\eta_i x})\). The ordinary terms satisfy
\[
3\eta_i - \eta_i = 2\eta_i \geq 2\eta_*, \ (\eta_i + 2\eta_j) - \eta_i = 2\eta_j \geq 2\eta_*.
\]
The weighted indices of the three exchange terms respectively satisfy
\[
\begin{aligned}
(2\eta_0 + \eta_{-1}) - \eta_1 &\geq 2\eta_{-1} \geq 2\eta_*, \\
(2\eta_0 + \eta_1) - \eta_{-1} &\geq 2\eta_1 \geq 2\eta_*, \\
(\eta_1 + \eta_0 + \eta_{-1}) - \eta_0 &= \eta_1 + \eta_{-1} \geq 2\eta_*.
\end{aligned}
\]
Therefore, every monomial term in \eqref{M23} satisfies the uniform estimate for its respective component
\begin{equation}\label{M27}
F_i(u(x)) = O(e^{(\eta_i + 2\eta_*)x}) \quad (x \to -\infty). \tag{6.6}
\end{equation}
Here for \(x \leq 0\), the larger the index rate, the smaller the exponential function. The integrating factor identity is
\begin{align}\label{M29}
\left(e^{\eta_i x}(u_i' - \eta_i u_i)\right)' = e^{\eta_i x}F_i(u(x)).
\end{align}

\noindent Since
\[
\lim_{t \to -\infty} e^{\eta_i t}(u_i'(t) - \eta_i u_i(t)) = 0,
\]

\noindent integrating from \(-\infty\) yields
\[
\begin{aligned}
u_i' - \eta_i u_i = e^{-\eta_i x} \int_{-\infty}^{x} e^{\eta_i t}F_i(u(t)) \, dt= O(e^{(\eta_i + 2\eta_*)x}).
\end{aligned}
\]

\noindent Moreover,
\[
(e^{-\eta_i x}u_i)' = e^{-\eta_i x}(u_i' - \eta_i u_i) = O(e^{2\eta_*x}).
\]

\noindent From \(a_i = \lim_{x \to -\infty} e^{-\eta_i x}u_i(x)\),
\begin{align}\label{M28}
e^{-\eta_i x}u_i(x) - a_i &= \int_{-\infty}^{x} (e^{-\eta_i t}u_i(t))' \, dt = O(e^{2\eta_*x}).
\end{align}
Multiplying back by \(e^{\eta_i x}\), and combining with \eqref{M28}, we obtain Lemma \ref{LE8}.
Indeed, integrating \eqref{M29} again, we have
\[
e^{-\eta_i x}u_i(x) - a_i = \int_{-\infty}^{x} e^{-2\eta_i z} \left( \int_{-\infty}^{z} e^{\eta_i t}F_i(u(t)) \, dt \right) dz.
\]

\noindent Here \(a_i\) is the already proven left limit, not a new free constant added in this step. First, applying Tonelli's theorem to the absolute values, we have
\[
\begin{aligned}
&\int_{-\infty}^{x} e^{-2\eta_i z} \int_{-\infty}^{z} e^{\eta_i t}|F_i(u(t))| \, dt \, dz \\
&= \int_{-\infty}^{x} e^{\eta_i t}|F_i(u(t))| \left( \int_{t}^{x} e^{-2\eta_i z} dz \right) dt \le \frac{1}{2\eta_i} \int_{-\infty}^{x} e^{-\eta_i t}|F_i(u(t))| \, dt < \infty.
\end{aligned}
\]

\noindent The final integrability comes from the left end \(e^{-\eta_i t}F_i(u(t)) = O(e^{2\eta_* t})\), as well as the continuity of the integrand on any finite interval. Therefore, Fubini's theorem can be used to exchange the order of integration. Using
\[
e^{\eta_i x} e^{\eta_i t} \int_{t}^{x} e^{-2\eta_i z} dz = \frac{e^{\eta_i (x-t)} - e^{-\eta_i (x-t)}}{2\eta_i} = \frac{1}{\eta_i} \sinh(\eta_i (x-t)),
\]

\noindent we obtain
\begin{equation}\label{M30}
u_i(x) = a_i e^{\eta_i x} + \frac{1}{\eta_i} \int_{-\infty}^{x} \sinh(\eta_i (x-t)) F_i(u(t)) \, dt.
\end{equation}
It holds for every finite \(x \in \mathbb{R}\), and is an exact equality, not just an approximation valid near negative infinity.
\end{proof}
\begin{proof}[Proof of Theorem \ref{The1}]

Let \(u, v\) be any two real-valued \(H^1\) global solutions, and assume they have the same left amplitude triple. Let \(w_i = u_i - v_i\). By Lemma \ref{LE8}, there exist fixed \(b_0 < 0\) and \(B > 0\) such that
\begin{equation}\label{M31}
|u_i(x)|, |v_i(x)| \le B e^{\eta_i x} \quad (x \le b_0), \ w_i = o(e^{\eta_i x}).
\end{equation}

\noindent Define
\begin{equation}\label{M32}
Z(x) = \max_{j=1,0,-1} e^{-\eta_j x} |w_j(x)|.
\end{equation}

\noindent For \(j, k, l \in \{1, 0, -1\}\), denote
\begin{equation}\label{M33}
\Delta_{jkl} := u_j u_k u_l - v_j v_k v_l = w_j u_k u_l + v_j w_k u_l + v_j v_k w_l.
\end{equation}
From \eqref{M31} and \(|w_j| \le Z(x) e^{\eta_j x}\),
\begin{equation}\label{M34}
|\Delta_{jkl}| \le 3B^2 Z(x) e^{(\eta_j + \eta_k + \eta_l)x}.
\end{equation}

\noindent The three nonlinear differences are exactly written as
\[
\begin{aligned}
F_1(u) - F_1(v) &= -2c\Delta_{111} - 2c\Delta_{001} - c\Delta_{-1,0,0}, \\
F_0(u) - F_0(v) &= -c\Delta_{000} - 2c\Delta_{110} - 2c\Delta_{-1,-1,0} - 2c\Delta_{1,-1,0}, \\
F_{-1}(u) - F_{-1}(v) &= -2c\Delta_{-1,-1,-1} - 2c\Delta_{0,0,-1} - c\Delta_{1,0,0}.
\end{aligned}
\]
For the ordinary terms, after removing the target weight \(e^{\eta_i x}\), the remainder is \(2\eta_j \ge 2\eta_*\). The three types of exchange terms respectively satisfy
\begin{equation}\label{M35}
\begin{aligned}
2\eta_0 + \eta_{-1} - \eta_1 &\ge 2\eta_{-1} \ge 2\eta_*, \\
2\eta_0 + \eta_1 - \eta_{-1} &\ge 2\eta_1 \ge 2\eta_*, \\
\eta_1 + \eta_{-1} &\ge 2\eta_*.
\end{aligned}
\end{equation}

\noindent Adding the terms from \eqref{M34} together, we obtain
\begin{equation}\label{M36}
|F_i(u(x)) - F_i(v(x))| \le C e^{(\eta_i + 2\eta_*)x} Z(x), \quad x \le b_0.
\end{equation}

\noindent The sums of the absolute values of the coefficients for the three equations are \(5c, 7c, 5c\) respectively, so we can uniformly choose
\[
C = 3B^2 \max\{5c, 7c, 5c\} = 21cB^2,
\]

\noindent \(b_0, B, C\) are fixed once here, and \(C\) does not change when the interval endpoint is subsequently moved to the left. The whole process does not divide by any \(a_i\), so zero amplitudes are allowed.

\noindent Subtracting the two solutions in \eqref{M30}, the same-amplitude leading terms cancel out, yielding
\begin{equation}\label{M37}
w_i(x) = \frac{1}{\eta_i} \int_{-\infty}^{x} \sinh(\eta_i(x-t)) [F_i(u(t)) - F_i(v(t))] \, dt.
\end{equation}

\noindent For \(b \le b_0\), let
\[
W_b = \max_i \sup_{x \le b} e^{-\eta_i x} |w_i(x)| < \infty.
\]
The finiteness is obtained from the left asymptotics and the continuity on compact intervals. Using $$\sinh(\eta_i(x-t)) \le \frac{1}{2} e^{\eta_i(x-t)}\ \ \text{for}\ t\leq x,  $$ we have
\begin{align*}
e^{-\eta_i x} |w_i(x)| &\le \frac{C}{2\eta_i} W_b \int_{-\infty}^{x} e^{2\eta_* t} \, dt = \frac{C}{4\eta_i \eta_*} W_b e^{2\eta_* x} \le \frac{C}{4\eta_*^2} W_b e^{2\eta_* b}.
\end{align*}

\noindent Taking the supremum yields \( W_b \le [C/(4\eta_*^2)] e^{2\eta_* b} W_b\). Since \(C\) does not vary with \(b\), we can take a sufficiently negative \(b\) such that the entire coefficient in the brackets is less than 1, hence \(W_b = 0\). Thus \(u = v\) on \((-\infty, b]\).

To extend the conclusion to the entire axis, note that at a finite point \(b\), the function values and first-order derivatives of the two solutions are the same. Let
\[
Y = (u_1, u_0, u_{-1}, u_1', u_0', u_{-1}') \in \mathbb{R}^6.
\]

\noindent The original system becomes a six-dimensional first-order equation \(Y' = \mathcal{F}(Y)\), where \(\mathcal{F}\) is a polynomial mapping. For any compact set \(K \subset \mathbb{R}^6\),
\[
L_K := \sup_{Y \in K} \|D\mathcal{F}(Y)\| < \infty, \ \|\mathcal{F}(Y) - \mathcal{F}(\tilde{Y})\| \le L_K \|Y - \tilde{Y}\|.
\]
Moreover,
\[
Y_u(b) = Y_v(b).
\]

\noindent Local uniqueness of initial values first gives \(Y_u = Y_v\) in a neighborhood of \(b\); since the initial values are the same at equal interval endpoints, iteratively extending along the common interval of existence of the two solutions yields \(Y_u = Y_v\) on \(\mathbb{R}\). Here, the two solutions are known to exist on the entire axis; it is not the case that the existence on the entire axis is deduced from the fact that ``the right-hand side is a polynomial''.

Within the given range, if two sets of real-valued \(H^1(\mathbb{R})^3\) solutions have the same complete left amplitude triple, then they are identical on the entire real axis.
This conclusion applies to all \(H^1\) solutions, not just to the candidate solutions already written in determinant form. It is precisely the realization of the left-uniqueness method from reference \cite{Guo26} in our system; the new work is to verify \eqref{M36} containing the exchange terms, rather than adding an additional common waveform assumption.

Take any real-valued \(H^1\) solution \(u\) of the original system. Lemma \ref{LE8} gives its unique left amplitude \(a\). Using this set of \(a\), construct the candidate solution \(u^a\) from \eqref{T1}; Lemma \ref{LE6} has already proven that it is smooth on the entire axis, belongs to \(H^1\), satisfies the original equation, and has the same left amplitude \(a\). Thus,  \(u = u^a\) on the entire axis, so there are no \(H^1\) solutions outside the explicit family.

Conversely, Lemma \ref{LE6}  has already proven that every set of \(a \in \mathbb{R}^3\) yields a solution. If two sets of parameters yield the same solution, taking the left limit via \eqref{H1} shows that they are identical. Therefore, the parameters and the solution are in one-to-one correspondence. The zero parameters obviously give the zero solution; if the solution is zero, its left amplitudes are also all zero. The screening conditions for the three components will be proven in the next section, thus completing the parametrization classification of the entire solution space.

\end{proof}
\section{Proof of Theorem \ref{The2}}\label{S4}
Set
\begin{equation}
 m:=\frac{M}{N},
 \
 \kappa:=\frac{cN}{2}.
 \label{eq:normalized-data}
\end{equation}
It is easy to get
$-N\leq M\leq N$. Thus  $-N\leq M\leq N$ and $c,N>0$ imply
$
 |m|\le1,
 \
 \kappa>0.
$

By the real spectral theorem, there are an orthogonal matrix $O$ and real
eigenvalues $\alpha_1,\alpha_2$ such that
$
 A=O diag(\alpha_1,\alpha_2)O^T.
$
Then
$A^2=O diag(\alpha_1^2,\alpha_2^2)O^T$, and hence
\begin{align*}
 I_2+\frac{c}{4\eta^2}e^{2\eta x}A^2
 &=O diag\left(
 1+\frac{c\alpha_1^2}{4\eta^2}e^{2\eta x},
 1+\frac{c\alpha_2^2}{4\eta^2}e^{2\eta x}
 \right)O^T.
\end{align*}
Because $O^{-1}=O^T$, its inverse is obtained by taking the reciprocal of each
diagonal entry.  Substitution into~\eqref{N4} gives
\begin{equation}
 U_A(x)=O diag(w_{\alpha_1}(x),w_{\alpha_2}(x))O^T,
 \label{eq:UA-scalar-decomp}
\end{equation}
where
\begin{equation}
 w_\alpha(x):=
 \frac{\alpha e^{\eta x}}
 {1+\dfrac{c\alpha^2}{4\eta^2}e^{2\eta x}}.
 \label{eq:w-alpha-rational}
\end{equation}
If $\alpha=0$, this function is identically zero.  If $\alpha\ne0$, define
$
 x_\alpha:=\frac1\eta
 \log\frac{2\eta}{\sqrt c\,|\alpha|}.
$
Then
$
 z:=\frac{\sqrt c\,|\alpha|}{2\eta}e^{\eta x}
   =e^{\eta(x-x_\alpha)}.
$
Using $2z/(1+z^2)=\operatorname{sech}(\log z)$ in
\eqref{eq:w-alpha-rational}, we find
\begin{equation}\label{eq:w-alpha-sech}
 w_\alpha(x)=sgn(\alpha)\frac{\eta}{\sqrt c}
 \operatorname{sech}\bigl(\eta(x-x_\alpha)\bigr).
\end{equation}
Therefore
\begin{align}
 \int_{\R}w_\alpha(x)^2\,dx
 &=\frac{\eta^2}{c}\int_{\R}
 \operatorname{sech}^2\bigl(\eta(x-x_\alpha)\bigr)\,dx=\frac{\eta}{c}\int_{\R}\operatorname{sech}^2 y\,dy=\frac{\eta}{c}\,[\tanh y]_{-\infty}^{+\infty}
 =\frac{2\eta}{c},
 \quad(\alpha\ne0).
 \label{eq:scalar-mass}
\end{align}
\subsection{The matrix-valued mass identity}

Since~\eqref{eq:UA-scalar-decomp} is symmetric,
\[
 U_A(x)^2
 =Odiag(w_{\alpha_1}(x)^2,w_{\alpha_2}(x)^2)O^T.
\]
The entries are integrable by~\eqref{eq:scalar-mass}, so integration entry by
entry gives
\begin{equation}
 J:=\int_{\R}U_A(x)^2\,dx
 =\frac{2\eta}{c}
 Odiag\bigl(\mathbf{1}_{\{\alpha_1\ne0\}},
              \mathbf{1}_{\{\alpha_2\ne0\}}\bigr)O^T.
 \label{eq:J-lambda-zero}
\end{equation}
Writing out the square of $U$,
\begin{equation*}
 U^2=
 \begin{pmatrix}
 u_1^2+u_0^2/2&(u_1+u_{-1})u_0/\sqrt2\\
 (u_1+u_{-1})u_0/\sqrt2&u_{-1}^2+u_0^2/2
 \end{pmatrix}.
\end{equation*}
Consequently,
\begin{equation}
 N=J_{11}+J_{22}=tr J,
 \
 M=J_{11}-J_{22}.
 \label{eq:NM-from-J}
\end{equation}
\begin{theorem}[Complete constraint classification for $\lambda=0$]
\label{thm:lambda-zero}
Fix $c>0$, $N>0$, and $|M|\le N$, and define $m,\kappa$ by
\eqref{eq:normalized-data}.  Among solutions with $\lambda=0$ and $\mu<0$,
the constraints~\eqref{eq9.4}--\eqref{N1} hold exactly in the following
two cases.

\textbf{Case one.}  For every $|m|\le1$, and
\begin{equation}
 \lambda=0,
 \
 \mu=-\kappa^2=-\frac{c^2N^2}{4}.
 \label{eq:rank-one-multipliers}
\end{equation}
All amplitudes are
\begin{equation}
 a_1=\frac{s}{2}(1+m),
 \quad
 a_0=\frac{\varepsilon s}{\sqrt2}\sqrt{1-m^2},
 \quad
 a_{-1}=\frac{s}{2}(1-m),
 \label{eq:rank-one-amplitudes}
\end{equation}
where $s\in\R\setminus\{0\}$ and $\varepsilon\in\{1,-1\}$.  When $|m|=1$,
$a_0=0$ and the two choices of $\varepsilon$ give the same matrix.

 \textbf{Case two.}  If $M=0$, and
\begin{equation}
 \lambda=0,
 \
 \mu=-\frac{\kappa^2}{4}=-\frac{c^2N^2}{16}.
 \label{eq:rank-two-multipliers}
\end{equation}
The amplitude matrix may be any invertible real symmetric matrix, equivalently
\begin{equation}
 a_1a_{-1}-\frac{a_0^2}{2}\ne0.
 \label{eq:rank-two-condition}
\end{equation}
\end{theorem}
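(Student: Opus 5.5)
The proof is an inversion of the mass identity \eqref{eq:J-lambda-zero}. By Theorem \ref{The1} with $\lambda=0$, every real $H^1$ solution is $U_A$ for a unique symmetric amplitude matrix $A$, with common rate $\eta=\sqrt{-\mu}$. Diagonalize $A=O\,\mathrm{diag}(\alpha_1,\alpha_2)O^T$ and use \eqref{eq:J-lambda-zero} together with \eqref{eq:NM-from-J}. This gives
\[
N=\operatorname{tr}J=\frac{2\eta}{c}\operatorname{rank}A,\qquad
M=\frac{2\eta}{c}\bigl(O_{11}^2\mathbf 1_{\{\alpha_1\ne0\}}+O_{12}^2\mathbf 1_{\{\alpha_2\ne0\}}-O_{21}^2\mathbf 1_{\{\alpha_1\ne0\}}-O_{22}^2\mathbf 1_{\{\alpha_2\ne0\}}\bigr).
\]
Since $N>0$, we have $A\neq0$, so $\operatorname{rank}A\in\{1,2\}$. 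I split into these two cases.

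\textbf{Rank two.} Here $J=\frac{2\eta}{c}I_2$. Then $M=J_{11}-J_{22}=0$ and $N=4\eta/c$, so $\eta=cN/4=\kappa/2$ and $\mu=-\kappa^2/4$. Every invertible symmetric $A$ has rank two. Invertibility is exactly $\det A=a_1a_{-1}-a_0^2/2\neq0$, which is \eqref{eq:rank-two-condition}. Conversely, each such $A$ with these multipliers gives $N$ and $M=0$, so Case two is an equivalence.

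\textbf{Rank one.} Write $A=\alpha vv^T$ with $v=(v_1,v_2)^T$ a unit vector and $\alpha\ne0$. Then $J=\frac{2\eta}{c}vv^T$. Hence $N=2\eta/c$, giving $\eta=\kappa$ and $\mu=-\kappa^2$. Also $M=N(v_1^2-v_2^2)$, so $v_1^2=(1+m)/2$ and $v_2^2=(1-m)/2$. This requires exactly $|m|\le1$. Reading off the entries of $A$ gives
\[
a_1=\alpha v_1^2=\frac{\alpha}{2}(1+m),\qquad a_{-1}=\alpha v_2^2=\frac{\alpha}{2}(1-m),\qquad a_0/\sqrt2=\alpha v_1v_2=\pm\frac{\alpha}{2}\sqrt{1-m^2}.
\]
With $s=\alpha$ and $\varepsilon=\operatorname{sgn}(v_1v_2)$ this is \eqref{eq:rank-one-amplitudes}. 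When $|m|=1$ one of $v_1,v_2$ vanishes, so $a_0=0$ and the sign $\varepsilon$ is irrelevant. Conversely, any $s\neq0$ and $\varepsilon$ give a rank-one matrix $A=s\,vv^T$ with this $v$, so the constraints hold.

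\textbf{Exhaustiveness and the main obstacle.} The two rank cases exhaust all nonzero $A$, so no other solution with $\lambda=0$ satisfies the constraints. The only delicate points are the following.
\begin{itemize}
\item The justification of \eqref{eq:J-lambda-zero} for $\lambda=0$ uses the reduced formula \eqref{N4}. I will take it from Theorem \ref{The1}.
\item The rank-one amplitude bookkeeping must respect the $\sqrt2$ normalization of the off-diagonal entry.
\item I must check that $\operatorname{rank}A$ counts the nonzero eigenvalues, which is immediate from the spectral decomposition.
\end{itemize}
I expect the sign and $\sqrt2$ bookkeeping in the rank-one case to be the only place where errors can creep in.
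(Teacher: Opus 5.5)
Your proposal is correct and follows the paper's proof essentially step for step. You exclude $A=0$ via $N>0$, split by rank, and invert the mass-matrix identity \eqref{eq:J-lambda-zero}: in rank one this gives $\eta=\kappa$ with $v_1^2=(1+m)/2$, $v_2^2=(1-m)/2$, and in rank two it gives $M=0$, $\eta=\kappa/2$. Like the paper, you rely on the $\lambda=0$ formula \eqref{N4} from Theorem \ref{The1}. Your rank-one converse, which writes the amplitudes directly as $s\,vv^T$ with an explicit unit vector $v$, is slightly cleaner than the paper's $\det A=0$ check, because it immediately exhibits the eigendirection needed to recover $M$.
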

\begin{proof}
Because $N>0$, the zero matrix $A=0$ is impossible: it would give $U_A=0$ and
$N=0$.  A nonzero $2\times2$ matrix has rank one or two, so the two cases below
are exhaustive.

A real symmetric rank-one matrix has exactly one nonzero eigenvalue.  If that
eigenvalue is denoted by $s\ne0$ and $v=(v_1,v_2)^T$ is a corresponding unit
eigenvector, then
\begin{equation*}
 A=svv^T,
 \
 v_1^2+v_2^2=1.
\end{equation*}
Formula~\eqref{eq:J-lambda-zero} becomes
$
 J=\frac{2\eta}{c}vv^T.
$
Thus, by~\eqref{eq:NM-from-J}, we have
\begin{align*}
 N&=tr J
   =\frac{2\eta}{c}tr(vv^T)
   =\frac{2\eta}{c}(v_1^2+v_2^2)
   =\frac{2\eta}{c},\\
 M&=J_{11}-J_{22}
   =\frac{2\eta}{c}(v_1^2-v_2^2).
\end{align*}
Since $\kappa=cN/2$, the first identity gives $\eta=\kappa$.  Since
$\mu=-\eta^2$, this proves~\eqref{eq:rank-one-multipliers}.  Dividing the
second identity by the first gives
\begin{equation}
 v_1^2-v_2^2=\frac{M}{N}=m.
 \label{eq:v-difference}
\end{equation}
Solving the linear system formed by~\eqref{eq:v-difference} and
$v_1^2+v_2^2=1$ gives
\begin{equation}
 v_1^2=\frac{1+m}{2},
 \
 v_2^2=\frac{1-m}{2}.
 \label{eq:v-squares}
\end{equation}
Real $v_1,v_2$ exist exactly when $|m|\le1$.  Furthermore,
\[
 |2v_1v_2|
 =2\sqrt{v_1^2v_2^2}
 =2\sqrt{\frac{1-m^2}{4}}
 =\sqrt{1-m^2}.
\]
Hence, away from the endpoints, there is a sign $\varepsilon\in\{1,-1\}$ such that
\begin{equation}
 2v_1v_2=\varepsilon\sqrt{1-m^2}.
 \label{eq:v-cross}
\end{equation}
At $|m|=1$ the left and right sides are both zero, independently of $\varepsilon$.
Finally, comparing
\[
 svv^T
 =\begin{pmatrix}
 sv_1^2&sv_1v_2\\sv_1v_2&sv_2^2
 \end{pmatrix}
 =\begin{pmatrix}
 a_1&a_0/\sqrt2\\a_0/\sqrt2&a_{-1}
 \end{pmatrix}
\]
and substituting~\eqref{eq:v-squares}--\eqref{eq:v-cross} yields exactly
\eqref{eq:rank-one-amplitudes}.

Conversely, let $s\ne0$, $|m|\le1$, and choose amplitudes from
\eqref{eq:rank-one-amplitudes}.  A direct determinant calculation gives
\begin{align*}
 \det A=a_1a_{-1}-\frac{a_0^2}{2}=\frac{s^2}{4}(1-m^2)
   -\frac12\left(\frac{s^2}{2}(1-m^2)\right)=0.
\end{align*}
The matrix is not zero because $s\ne0$ and $(1+m)+(1-m)=2$.  Thus it has rank
one.  Standing \eqref{eq9.4}  produces a global $H^1$
solution, and the preceding mass-matrix calculation gives the prescribed
$N,M$.

Both eigenvalues are nonzero.  The two indicators in
\eqref{eq:J-lambda-zero} are therefore one, and orthogonality gives
\begin{equation}
 J=\frac{2\eta}{c}OI_2O^T=\frac{2\eta}{c}I_2.
 \label{eq:J-rank-two}
\end{equation}
Hence
$
 M=J_{11}-J_{22}=0,
 \
 N=tr J=\frac{4\eta}{c}.
$
Thus rank two is possible only for $M=0$, and
$\eta=cN/4=\kappa/2$.  Since $\mu=-\eta^2$, we obtain
\eqref{eq:rank-two-multipliers}.  A $2\times2$ matrix has rank two exactly when
its determinant is nonzero, which is precisely~\eqref{eq:rank-two-condition}.

Conversely, every invertible symmetric $A$ has rank two.  With
$\eta=\kappa/2$, formula~\eqref{eq:J-rank-two} gives
$N=4\eta/c$ and $M=0$.  Standing again gives
the required global $H^1$ solution.  This proves necessity, sufficiency, and
exhaustion.
\end{proof}
Let
\begin{equation*}
 K:=diag(\eta_1,\eta_0,\eta_0,\eta_{-1}),
 \
 R:=\begin{pmatrix}1&1&0&0\\0&0&1&1\end{pmatrix},
\end{equation*}
From \eqref{T1}, we have
\begin{equation}
 U_a=R\bigl(I_4+c\mathcal AC\mathcal AC\bigr)^{-1}
 \mathcal A R^T.
 \label{eq:det-solution}
\end{equation}
Define the three indicators
\begin{equation*}
 \varepsilon_i:=\mathbf{1}_{\{a_i\ne0\}},
 \ i=1,0,-1.
\end{equation*}
The active coordinate space $E\subset\R^4$ is defined by retaining coordinate
$1$ when $a_1\ne0$, both coordinates $2,3$ when $a_0\ne0$, and coordinate $4$
when $a_{-1}\ne0$.  Thus
$
 \dim E=\varepsilon_1+2\varepsilon_0+\varepsilon_{-1}.
$
After ordering active coordinates first,
$
 \mathcal A=\begin{pmatrix}\mathcal A_E&0\\0&0\end{pmatrix}.
$
Every retained diagonal entry is nonzero, and the retained middle
$2\times2$ block is
$
 \begin{pmatrix}0&q\\q&0\end{pmatrix},
 \ q=\frac{a_0}{\sqrt2}e^{\eta_0x}\ne0,
$
whose determinant is $-q^2\ne0$.  Hence $\mathcal A_E$ is invertible.  Denote
by $C_E,K_E$ the corresponding principal submatrices and by $R_E$ the retained
columns of $R$.

The inactive rows of $\mathcal A$ are zero.  Multiplying the block matrices in
\eqref{eq:det-solution}, or equivalently taking the Schur complement of the
inactive identity block, gives
\begin{equation}
 U_\mathbf{a}=R_E\bigl(I_E+c\mathcal A_EC_E\mathcal A_EC_E\bigr)^{-1}
 \mathcal A_E R_E^T.
 \label{eq:active-compression}
\end{equation}
No inverse of the possibly singular full matrix $\mathcal A$ has been used. Because the two middle diagonal entries of $K$ are equal,
\begin{equation}
 \mathcal A_E'=K_E\mathcal A_E=\mathcal A_EK_E.
 \label{eq:A-derivative}
\end{equation}
Next, each nonzero entry of $C$ has the form $(k_i+k_j)^{-1}$, where $k_i$
and $k_j$ are the corresponding diagonal entries of $K$.  Thus multiplication
by $K$ on the left and on the right yields
$$
 KC+CK=R^TR.
$$
Indeed, the two diagonal $2\times2$ blocks on the left consist entirely of
ones, and all off-block entries are zero, exactly as in $R^TR$.  Restriction to
$E$ gives
\begin{equation}
 K_EC_E+C_EK_E=R_E^TR_E.
 \label{eq:Lyapunov-E}
\end{equation}
For notational brevity, set
\begin{align}\label{eq:T-def}
 T_E:=\bigl(\mathcal A_E^{-1}
              +cC_E\mathcal A_EC_E\bigr)^{-1},\ P_E:=\bigl(\mathcal A_E^{-1}C_E^{-1}\mathcal A_E^{-1}+cC_E\bigr)^{-1}.
\end{align}
Since
\[
 I_E+c\mathcal A_EC_E\mathcal A_EC_E
 =\mathcal A_E
  (\mathcal A_E^{-1}+cC_E\mathcal A_EC_E),
\]
equation~\eqref{eq:active-compression} becomes
\begin{equation}
 U_a=R_ET_ER_E^T.
 \label{eq:U-via-T}
\end{equation}

\begin{lemma}
\label{lem:derivative-identity}
For every $x\in\R$,
\begin{equation}
 U_a(x)^2=\frac{d}{dx}\bigl(R_EP_E(x)R_E^T\bigr).
 \label{eq:exact-derivative}
\end{equation}
\end{lemma}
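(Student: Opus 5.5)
The plan is to compute $\frac{d}{dx}P_E$ directly from its inverse, working entirely on the active subspace $E$ where $\mathcal A_E$ is invertible, and then to identify the result with $T_ER_E^TR_ET_E$. Then \eqref{eq:U-via-T} gives the claim immediately:
\[
R_EP_E'R_E^T=(R_ET_ER_E^T)^2=U_a^2 .
\]
Throughout I drop the subscript $E$. The inputs are the derivative rule \eqref{eq:A-derivative}, the Lyapunov identity \eqref{eq:Lyapunov-E}, and the symmetry of $\mathcal A$ and $C$.

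\textbf{Step 1: differentiate $P^{-1}$.} From \eqref{eq:A-derivative} we get $(\mathcal A^{-1})'=-K\mathcal A^{-1}=-\mathcal A^{-1}K$. The term $cC$ is constant, so
\[
(P^{-1})'=-\mathcal A^{-1}\bigl(KC^{-1}+C^{-1}K\bigr)\mathcal A^{-1}.
\]
Multiplying \eqref{eq:Lyapunov-E} on both sides by $C^{-1}$ gives $KC^{-1}+C^{-1}K=C^{-1}R^TRC^{-1}$. Hence
\[
P'=-P(P^{-1})'P=P\mathcal A^{-1}C^{-1}R^TRC^{-1}\mathcal A^{-1}P .
\]
This step needs $P$ to exist for every $x$. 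I will check this by writing $P^{-1}=\mathcal A^{-1}C^{-1}\bigl(\mathcal A^{-1}+cC\mathcal AC\bigr)$. The second factor equals $\mathcal A^{-1}\bigl(I+c\mathcal AC\mathcal AC\bigr)$, which is invertible by Lemma \ref{LE2} applied on $E$, since $C_E$ is positive definite. The same factorization shows that $T$ is well defined.

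\textbf{Step 2: relate $P$ and $T$.} The factorization above reads $P^{-1}=\mathcal A^{-1}C^{-1}T^{-1}$, so $P=TC\mathcal A$. The matrices $\mathcal A$ and $C$ are symmetric, so $\mathcal A^{-1}+cC\mathcal AC$ is symmetric, and therefore $T$ is symmetric. Likewise $P^{-1}$ is visibly symmetric, so $P$ is symmetric. Transposing $P=TC\mathcal A$ then gives $P=\mathcal ACT$ as well.

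\textbf{Step 3: conclude.} By Step 2, $P\mathcal A^{-1}C^{-1}=T$ and $C^{-1}\mathcal A^{-1}P=T$. Substituting these into Step 1 gives $P'=TR^TRT$. Applying $R$ on the left and $R^T$ on the right yields $(RTR^T)^2=U_a^2$ by \eqref{eq:U-via-T}, which proves \eqref{eq:exact-derivative}.

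The main point to get right is the bookkeeping in Step 2: the two factorizations of $P$, on the right and on the left, must be used consistently with the symmetry. There is also the degenerate case $E=\emptyset$ ($\mathbf a=0$). There both sides of \eqref{eq:exact-derivative} are zero, interpreting $R_EP_ER_E^T=0$.
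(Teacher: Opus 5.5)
Your proposal is correct and follows the paper's proof essentially step for step. You differentiate $P_E^{-1}$, use $\mathcal A_E'=K_E\mathcal A_E=\mathcal A_EK_E$ together with the Lyapunov identity to reach $P_E'=P_ED_ER_E^TR_ED_E^TP_E$ with $D_E=\mathcal A_E^{-1}C_E^{-1}$, and then identify $P_ED_E=D_E^TP_E=T_E$, exactly as the paper does. The differences are minor: you obtain the second identity by transposing $P_E=T_EC_E\mathcal A_E$ using the symmetry of $P_E$ and $T_E$, whereas the paper checks $T_E^{-1}D_E^T=P_E^{-1}$ directly, and you verify the invertibility of $P_E$ and $T_E$ explicitly, which the paper leaves implicit.
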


\begin{proof}
Put
\[
 B_E:=P_E^{-1}
 =\mathcal A_E^{-1}C_E^{-1}\mathcal A_E^{-1}+cC_E.
\]
From~\eqref{eq:A-derivative} and the inverse differentiation rule,
\begin{equation*}
 (\mathcal A_E^{-1})'
 =-\mathcal A_E^{-1}\mathcal A_E'\mathcal A_E^{-1}
 =-\mathcal A_E^{-1}K_E
 =-K_E\mathcal A_E^{-1}.
 \label{eq:Ainv-derivative}
\end{equation*}
The final equality again uses the commutation of $K_E$ and $\mathcal A_E$.
Since $C_E$ is constant in $x$,
\begin{align*}
 B_E'
 &=(\mathcal A_E^{-1})'C_E^{-1}\mathcal A_E^{-1}
   +\mathcal A_E^{-1}C_E^{-1}(\mathcal A_E^{-1})'\\
 &=-\mathcal A_E^{-1}
   (K_EC_E^{-1}+C_E^{-1}K_E)\mathcal A_E^{-1}.
\end{align*}
Multiplying~\eqref{eq:Lyapunov-E} on the left and right by $C_E^{-1}$ gives
\begin{equation*}
 C_E^{-1}K_E+K_EC_E^{-1}
 =C_E^{-1}R_E^TR_EC_E^{-1}.
\end{equation*}
Therefore, with $D_E:=\mathcal A_E^{-1}C_E^{-1}$,
\begin{equation*}
 B_E'=-D_ER_E^TR_ED_E^T.
 \label{eq:B-derivative}
\end{equation*}
Differentiating $P_E=B_E^{-1}$ now gives
\begin{equation}
 P_E'=-P_EB_E'P_E
      =P_ED_ER_E^TR_ED_E^TP_E.
 \label{eq:Pprime-intermediate}
\end{equation}
It remains to identify the two outer factors.  From
\begin{align*}
 D_ET_E^{-1}
 &=\mathcal A_E^{-1}C_E^{-1}
   (\mathcal A_E^{-1}+cC_E\mathcal A_EC_E)\\
 &=\mathcal A_E^{-1}C_E^{-1}\mathcal A_E^{-1}+cC_E
 =B_E,
\end{align*}
we get $P_ED_E=T_E$.  Similarly,
\begin{align*}
 T_E^{-1}D_E^T
 &=(\mathcal A_E^{-1}+cC_E\mathcal A_EC_E)
   C_E^{-1}\mathcal A_E^{-1}\\
 &=\mathcal A_E^{-1}C_E^{-1}\mathcal A_E^{-1}+cC_E
 =B_E,
\end{align*}
so $D_E^TP_E=T_E$.  Substituting both identities into
\eqref{eq:Pprime-intermediate} yields
$
 P_E'=T_ER_E^TR_ET_E.
$
Finally, by~\eqref{eq:U-via-T},
\[
 U_a^2
 =R_ET_ER_E^TR_ET_ER_E^T
 =R_EP_E'R_E^T
 =\frac{d}{dx}(R_EP_ER_E^T),
\]
which proves~\eqref{eq:exact-derivative}.
\end{proof}
\begin{lemma}
\label{lem:P-endpoints}
On every nonempty active space $E$,
\begin{equation}
 \lim_{x\to-\infty}P_E(x)=0,
 \
 \lim_{x\to+\infty}P_E(x)=\frac1cC_E^{-1}.
 \label{eq:P-endpoints}
\end{equation}
\end{lemma}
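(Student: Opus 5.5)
We prove both limits from the definition $P_E=(\mathcal A_E^{-1}C_E^{-1}\mathcal A_E^{-1}+cC_E)^{-1}$, using only that $\mathcal A_E(x)$ is invertible for every $x$ and that its entries are fixed nonzero constants times $e^{\eta_i x}$. Since $\mathcal A_E$ is symmetric, we first rewrite
\[
 P_E=\mathcal A_E\bigl(C_E^{-1}+c\,\mathcal A_EC_E\mathcal A_E\bigr)^{-1}\mathcal A_E ,
\]
which follows from $B_E=\mathcal A_E^{-1}\bigl(C_E^{-1}+c\mathcal A_EC_E\mathcal A_E\bigr)\mathcal A_E^{-1}$. The middle matrix $S_E:=C_E^{-1}+c\mathcal A_EC_E\mathcal A_E$ is symmetric. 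Because $C_E$ is a principal submatrix of the positive definite $C$ (Lemma \ref{LE2}), we have $C_E>0$, and $\mathcal A_EC_E\mathcal A_E^T\ge0$. Hence $S_E\ge C_E^{-1}$, and therefore $\|S_E^{-1}\|\le\|C_E\|$ uniformly in $x$.

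\textbf{Left end.} As $x\to-\infty$, every nonzero entry of $\mathcal A_E$ is $O(e^{\eta_*x})$, so $\|\mathcal A_E\|\to0$. The uniform bound then gives $\|P_E\|\le\|\mathcal A_E\|^2\|C_E\|\to0$.

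\textbf{Right end.} As $x\to+\infty$, we work with $B_E$ directly. Here $\|\mathcal A_E^{-1}\|=O(e^{-\eta_*x})\to0$, since $\mathcal A_E^{-1}$ consists of the reciprocals $a_i^{-1}e^{-\eta_ix}$ on the diagonal positions and of $(\sqrt2/a_0)e^{-\eta_0x}$ in the off-diagonal middle block. Consequently $\mathcal A_E^{-1}C_E^{-1}\mathcal A_E^{-1}\to0$, and $B_E\to cC_E$. This limit is invertible because $C_E>0$ and $c>0$. Continuity of matrix inversion on invertible matrices then gives $P_E=B_E^{-1}\to\frac1cC_E^{-1}$.

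The only delicate point is the left end. There $B_E$ blows up, so one cannot simply pass to the limit in $B_E^{-1}$; this is why we factor out $\mathcal A_E$ and use the uniform lower bound $S_E\ge C_E^{-1}$. The same factorization shows that $P_E$ is well defined for every $x$, because $B_E$ is similar to a positive definite matrix through $\mathcal A_E$.
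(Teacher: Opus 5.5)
Your proof is correct and follows essentially the paper's route: both use the factorization $P_E=\mathcal A_E\bigl(C_E^{-1}+c\,\mathcal A_EC_E\mathcal A_E\bigr)^{-1}\mathcal A_E$ at the left end and pass directly to the limit $B_E\to cC_E$, followed by continuity of inversion, at the right end. The only differences are small. At the left end you use the uniform bound $S_E^{-1}\le C_E$, whereas the paper notes that the middle inverse converges to $C_E$. Also, in your closing remark $B_E$ is congruent rather than similar to $S_E$, but invertibility follows either way.
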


\begin{proof}
Factoring $P_E^{-1}$ on both sides gives
\begin{align}
 P_E^{-1}
 &=\mathcal A_E^{-1}
   (C_E^{-1}+c\mathcal A_EC_E\mathcal A_E)
   \mathcal A_E^{-1},\notag\\
 P_E
 &=\mathcal A_E
   (C_E^{-1}+c\mathcal A_EC_E\mathcal A_E)^{-1}
   \mathcal A_E.
 \label{eq:P-minus-factor}
\end{align}
As $x\to-\infty$, every entry of $\mathcal A_E(x)$ tends to zero and the
middle inverse in~\eqref{eq:P-minus-factor} tends to $C_E$.  Hence the product
tends to zero.

As $x\to+\infty$, every nonzero entry of $\mathcal A_E$ grows exponentially,
whereas every entry of $\mathcal A_E^{-1}$ decays exponentially.  It follows
directly from~\eqref{eq:T-def} that
\[
 P_E^{-1}
 =\mathcal A_E^{-1}C_E^{-1}\mathcal A_E^{-1}+cC_E
 \longrightarrow cC_E.
\]
Since $C_E$ is positive definite and hence invertible, inversion is continuous
near $cC_E$, giving $P_E\to c^{-1}C_E^{-1}$.
\end{proof}
Integrating Lemma~\ref{lem:derivative-identity} over $[-L,L]$ and applying the
fundamental theorem of calculus gives
\[
 \int_{-L}^{L}U_a(x)^2\,dx
 =R_E\bigl(P_E(L)-P_E(-L)\bigr)R_E^T.
\]
Letting $L\to\infty$ and using Lemma~\ref{lem:P-endpoints}, we obtain
\begin{equation}
 J:=\int_{\R}U_a(x)^2\,dx
 =\frac1cR_EC_E^{-1}R_E^T.
 \label{eq:J-nonzero-lambda}
\end{equation}
\begin{proposition}
\label{prop:mass-formula}
For every solution in the noncollision determinant family,
\begin{align} \label{eq:N-indicator}
 N=\frac2c
 (\eta_1\varepsilon_1+2\eta_0\varepsilon+\eta_{-1}\varepsilon_{-1}),\ M=\frac2c
 (\eta_1\varepsilon_1-\eta_{-1}\varepsilon_{-1}).
\end{align}
These values depend on the support indicators but not on the magnitudes or
signs of the nonzero amplitudes.
\end{proposition}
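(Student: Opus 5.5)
Starting from the matrix mass identity \eqref{eq:J-nonzero-lambda}, $J=\frac1c R_EC_E^{-1}R_E^T$, the plan is to read off $N$ and $M$ from its entries. The link is \eqref{eq:NM-from-J}: the computation of $U^2$ there is purely algebraic, so it holds for any symmetric $U$. Hence $N=\operatorname{tr}J$ and $M=J_{11}-J_{22}$. The whole task is therefore to compute $\operatorname{tr}(R_EC_E^{-1}R_E^T)$ and the difference of its diagonal entries.

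Since $C$ and $R$ are block diagonal with respect to the splitting $\{1,2\}\cup\{3,4\}$, the matrix $C_E$ is block diagonal too. Its upper block is the principal submatrix of $\begin{pmatrix}\alpha&\beta\\ \beta&\gamma\end{pmatrix}$ on the active coordinates among $\{1,2\}$. Its lower block is the analogous submatrix of $\begin{pmatrix}\gamma&\delta\\ \delta&\varepsilon\end{pmatrix}$ on the active coordinates among $\{3,4\}$. The rows of $R_E$ consist of ones on these retained coordinates. Consequently
\[
J_{11}=\frac1c\,\mathbf 1^TC_{E,+}^{-1}\mathbf 1,\qquad J_{22}=\frac1c\,\mathbf 1^TC_{E,-}^{-1}\mathbf 1,
\]
with the convention that an empty block contributes $0$. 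Coordinate $2$ is active iff $a_0\neq0$, and the same holds for coordinate $3$. The upper block is thus governed by $(\varepsilon_1,\varepsilon_0)$ and the lower block by $(\varepsilon_0,\varepsilon_{-1})$.

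The key computation is the sum of the entries of the inverse Cauchy-type Gram matrix, and there are four cases per block.
\begin{itemize}
\item Empty block: the contribution is $0$.
\item Single active coordinate with rate $\eta$: the block is $(2\eta)^{-1}$, giving $2\eta$.
\item Both coordinates active, rates $p\neq q$: the inverse is $\frac{1}{d}\begin{pmatrix}(2q)^{-1}&-(p+q)^{-1}\\-(p+q)^{-1}&(2p)^{-1}\end{pmatrix}$ with $d=\frac{(p-q)^2}{4pq(p+q)^2}$ from Lemma \ref{LE2}. Its entry sum is
\[
\frac1d\Big(\frac{p+q}{2pq}-\frac{2}{p+q}\Big)=\frac1d\cdot\frac{(p-q)^2}{2pq(p+q)}=2(p+q).
\]
\end{itemize}
In every case the block contribution equals $2\times$(sum of the active rates). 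This is the expected identity $\mathbf 1^TC^{-1}\mathbf 1=2\sum k_i$ for Cauchy matrices $\big((k_i+k_j)^{-1}\big)$. Alternatively it follows from the Lyapunov relation \eqref{eq:Lyapunov-E}: multiplying $K_EC_E+C_EK_E=R_E^TR_E$ by $C_E^{-1}$ and taking the trace gives $2\operatorname{tr}K_E=\operatorname{tr}(R_EC_E^{-1}R_E^T)$. I would present this trace argument, since it is cleaner and needs no case distinction. It yields
\[
N=\frac2c\operatorname{tr}K_E=\frac2c(\eta_1\varepsilon_1+2\eta_0\varepsilon_0+\eta_{-1}\varepsilon_{-1}).
\]

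For $M$ the trace trick gives only the sum, so I would fall back on the blockwise computation. It gives
\[
J_{11}=\frac2c(\eta_1\varepsilon_1+\eta_0\varepsilon_0),\qquad J_{22}=\frac2c(\eta_0\varepsilon_0+\eta_{-1}\varepsilon_{-1}).
\]
Subtracting yields the claimed formula for $M$, and the $\eta_0$ terms cancel. This can also be obtained from the trace trick applied separately to each block of the block-diagonal Lyapunov equation. Independence from the magnitudes and signs of the amplitudes is then immediate, since only $E$ enters.

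The main care point is the two-coordinate Cauchy inverse, specifically that $p\neq q$ is guaranteed. In the noncollision regime the rates are pairwise distinct, so $d_\pm>0$ by Lemma \ref{LE2}. The blockwise Lyapunov argument sidesteps this anyway, requiring only the invertibility of $C_E$.
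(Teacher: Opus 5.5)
Your proposal is correct and is essentially the paper's own argument. You read $N=\operatorname{tr}J$ and $M=J_{11}-J_{22}$ off $J=\frac1cR_EC_E^{-1}R_E^T$, and you evaluate $\mathbf 1^TC_{E,\pm}^{-1}\mathbf 1$ block by block with the explicit inverse of $C(p,q)$, giving $2(p+q)$, $2p$, $2q$ or $0$.

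Your Lyapunov-trace variant (multiply \eqref{eq:Lyapunov-E}, block by block, by $C_E^{-1}$ and take traces) is a valid and cleaner substitute for the explicit inversion, and it reuses an identity the paper already proves. It does not really sidestep the distinctness of rates, though: the invertibility of $C_E$ it requires is, on any two-coordinate block, exactly the condition $p\neq q$.
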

\begin{proof}
For positive $p\ne q$, define
\begin{equation*}
 C(p,q):=
 \begin{pmatrix}
 (2p)^{-1}&(p+q)^{-1}\\
 (p+q)^{-1}&(2q)^{-1}
 \end{pmatrix}.
 \label{eq:Cpq}
\end{equation*}
Its determinant is
\begin{align*}
 \det C(p,q)
 &=\frac1{4pq}-\frac1{(p+q)^2}=\frac{(p+q)^2-4pq}{4pq(p+q)^2}
 =\frac{(p-q)^2}{4pq(p+q)^2}>0.
 \label{eq:Cpq-det}
\end{align*}
Thus
\[
 C(p,q)^{-1}
 =\frac1{\det C(p,q)}
 \begin{pmatrix}
 (2q)^{-1}&-(p+q)^{-1}\\
 -(p+q)^{-1}&(2p)^{-1}
 \end{pmatrix}.
\]
Consequently,
\begin{align*}
 (1,1)C(p,q)^{-1}(1,1)^T
 &=\frac{\dfrac1{2p}+\dfrac1{2q}-\dfrac2{p+q}}
 {\dfrac{(p-q)^2}{4pq(p+q)^2}}=\frac{\dfrac{(p+q)^2-4pq}{2pq(p+q)}}
 {\dfrac{(p-q)^2}{4pq(p+q)^2}}=\frac{\dfrac{(p-q)^2}{2pq(p+q)}}
 {\dfrac{(p-q)^2}{4pq(p+q)^2}}
 =2(p+q).
\end{align*}
If only the first coordinate is active, the restricted matrix is
$[(2p)^{-1}]$ and its inverse contributes $2p$; if only the second coordinate
is active, the contribution is $2q$; if neither is active, the contribution is
zero.

In~\eqref{M1}, the first row of $R$ selects the block
$C(\eta_1,\eta_0)$ and the second row selects
$C(\eta_0,\eta_{-1})$.  Applying the preceding three possibilities to each
block in~\eqref{eq:J-nonzero-lambda} gives
\begin{equation*}
 J_{11}=\frac2c(\eta_1\varepsilon_1+\eta_0\varepsilon_0),
 \
 J_{22}=\frac2c(\eta_0\varepsilon_0+\eta_{-1}\varepsilon_{-1}).
 \label{eq:J-diagonal}
\end{equation*}
Together with~\eqref{eq:NM-from-J}, this complete the  proof of the Proposition \ref{prop:mass-formula}.
\end{proof}
Multiply~\eqref{eq:N-indicator} by $c/2$ and use
\eqref{eq:normalized-data}.  The constraint problem is equivalent to
\begin{align}\label{eq:algebraic-mass}
 \eta_1\varepsilon_1+2\eta_0\varepsilon_0+\eta_{-1}\varepsilon_{-1}=\kappa,\ \ \eta_1\varepsilon_1-\eta_{-1}\varepsilon_{-1}=\kappa m,\ \ \eta_1^2+\eta_{-1}^2&=2\eta_0^2.
\end{align}
The rates must be strictly positive, and the noncollision condition is
$
 \eta_1\ne\eta_{-1}.
$
There are $2^3-1=7$ nonempty binary triples
$(\varepsilon_1,\varepsilon_0,\varepsilon_{-1})$.  The empty triple would give $N=0$, contrary to
$N>0$.  We now solve all seven cases without omitting any algebra.

Let $(\varepsilon_1,\varepsilon_0,\varepsilon_{-1})=(1,0,0)$.  Equations
\eqref{eq:algebraic-mass}read
$
 \eta_1=\kappa,
\
 \eta_1=\kappa m.
$
Because $\kappa>0$, comparison gives $m=1$.  The unobserved outer rate
$\eta_{-1}$ remains free; write
$
 \eta_{-1}=t>0.
$
Then~\eqref{eq:algebraic-mass} determines
$
 \eta_0=\sqrt{\frac{\kappa^2+t^2}{2}}.
$
Using
\begin{equation}
 \mu=-\frac{\kappa^2+t^2}{2},
 \
 \lambda=\frac{t^2-\kappa^2}{2}.
 \label{eq:A1-multipliers}
\end{equation}
The condition $\lambda\ne0$ is exactly $t\ne\kappa$.  All three rates are
positive, so holds by the converse argument after
.  Hence the complete stratum is
\begin{equation*}
 m=1,\ t>0,\quad t\ne\kappa,\
 (a_1,a_0,a_{-1})=(s_1,0,0),\quad s_1\ne0,
\end{equation*}
with multipliers~\eqref{eq:A1-multipliers}.

Let $(\varepsilon_1,\varepsilon_0,\varepsilon_{-1})=(0,0,1)$.  Then
$
 \eta_{-1}=\kappa,
 \
 -\eta_{-1}=\kappa m,
$
so $m=-1$.  Put $\eta_1=t>0$.  The spectral relation gives
$
 \eta_0=\sqrt{\frac{t^2+\kappa^2}{2}},
$
and hence
$
 \mu=-\frac{\kappa^2+t^2}{2},
 \
 \lambda=\frac{\kappa^2-t^2}{2}.
$
Again $\lambda\ne0$ exactly when $t\ne\kappa$.  Thus
\begin{equation*}
 m=-1,\ t>0,\quad t\ne\kappa,\
 (a_1,a_0,a_{-1})=(0,0,s_{-1}),\ s_{-1}\ne0.
\end{equation*}
Let $(\varepsilon_1,\varepsilon_0,\varepsilon_{-1})=(0,1,0)$.  Then
$
 2\eta_0=\kappa,
 \
 0=\kappa m.
$
Therefore
\begin{equation*}
 \eta_0=\frac\kappa2,
 \
 m=0,
\
 \mu=-\eta_0^2=-\frac{\kappa^2}{4}
 =-\frac{c^2N^2}{16}.
\end{equation*}
From
$
 \eta_1^2=\frac{\kappa^2}{4}-\lambda,
 \
 \eta_{-1}^2=\frac{\kappa^2}{4}+\lambda.
$
Both rates are strictly positive exactly when
$
 -\frac{\kappa^2}{4}<\lambda<\frac{\kappa^2}{4}.
$
Removing the collision value $\lambda=0$ gives
$
 m=0,\
 0<|\lambda|<\frac{\kappa^2}{4},
 \
 (a_1,a_0,a_{-1})=(0,s_0,0),\ s_0\ne0.
$

Let $(\varepsilon_1,\epsilon_0,\varepsilon_{-1})=(1,0,1)$.  The mass and magnetization equations
are
\begin{equation*}
 \eta_1+\eta_{-1}=\kappa,
 \
 \eta_1-\eta_{-1}=\kappa m.
\end{equation*}
Adding and subtracting them gives
\begin{equation*}
 \eta_1=\frac\kappa2(1+m),
 \
 \eta_{-1}=\frac\kappa2(1-m).
\end{equation*}
Both are positive exactly when $-1<m<1$.  The spectral relation gives
\begin{align*}
 \eta_0^2
 &=\frac{\eta_1^2+\eta_{-1}^2}{2}=\frac12\frac{\kappa^2}{4}
   \bigl[(1+m)^2+(1-m)^2\bigr]=\frac{\kappa^2}{8}(2+2m^2)
 =\frac{\kappa^2}{4}(1+m^2),
\end{align*}
so
\begin{equation*}
 \eta_0=\frac\kappa2\sqrt{1+m^2}.
\end{equation*}
The multipliers are
\begin{align*}
 \mu
 &=-\eta_0^2
 =-\frac{\kappa^2}{4}(1+m^2)
 =-\frac{c^2}{16}(N^2+M^2),
 \label{eq:B-mu}\\
 \lambda
 &=\frac{\eta_{-1}^2-\eta_1^2}{2}
 =\frac{(\eta_{-1}-\eta_1)(\eta_{-1}+\eta_1)}2=\frac{(-\kappa m)\kappa}{2}
 =-\frac{\kappa^2m}{2}
 =-\frac{c^2NM}{8}.
\end{align*}
Thus $\lambda\ne0$ exactly when $m\ne0$.  The complete condition and
amplitudes are therefore
\begin{equation*}
 0<|m|<1,
 \
 (a_1,a_0,a_{-1})=(s_1,0,s_{-1}),
 \ s_1s_{-1}\ne0.
\end{equation*}
Let $(\varepsilon_1,\varepsilon_0,\varepsilon_{-1})=(1,1,0)$.  Equations
\eqref{eq:algebraic-mass} become
$
 \eta_1+2\eta_0=\kappa,
 \
 \eta_1=\kappa m.
$
Therefore
\begin{equation}
 \eta_1=\kappa m,
\
 \eta_0=\frac\kappa2(1-m).
 \label{eq:Cplus-first-rates}
\end{equation}
Since $\eta_1>0$, we need $m>0$.  The relation
\eqref{eq:algebraic-mass} gives
\begin{align*}
 \eta_{-1}^2=2\eta_0^2-\eta_1^2=2\left[\frac{\kappa^2}{4}(1-m)^2\right]-\kappa^2m^2=\frac{\kappa^2}{2}(1-2m+m^2-2m^2)=\frac{\kappa^2}{2}(1-2m-m^2).
\end{align*}
Hence
\begin{equation}
 \eta_{-1}=\frac\kappa{\sqrt2}\sqrt{1-2m-m^2}.
 \label{eq:Cplus-last-rate}
\end{equation}
Strict positivity of this rate is equivalent to
\[
 1-2m-m^2>0
 \ \Longleftrightarrow\
 m^2+2m-1<0.
\]
The roots of $m^2+2m-1$ are $-1-\sqrt2$ and $-1+\sqrt2$.
Since the quadratic opens upward, it is negative strictly between its roots.
Intersecting with $m>0$ yields
\begin{equation}
 0<m<\sqrt2-1.
 \label{eq:Cplus-basic-range}
\end{equation}
This range also makes $\eta_0>0$ because $\sqrt2-1<1$.

Now
\begin{align*}
 \mu
 &=-\eta_0^2
 =-\frac{\kappa^2}{4}(1-m)^2
 =-\frac{c^2}{16}(N-M)^2,\\\nonumber
 \lambda
 &=\eta_0^2-\eta_1^2
 =\frac{\kappa^2}{4}(1-m)^2-\kappa^2m^2=\frac{\kappa^2}{4}(1-2m-3m^2)\\\nonumber
& =\frac{\kappa^2}{4}(1+m)(1-3m)=\frac{c^2}{16}(N+M)(N-3M).
\end{align*}
Thus $\lambda=0$ exactly when $m=1/3$ within
\eqref{eq:Cplus-basic-range}.  At that point all three rates equal
$\kappa/3$, so it is precisely the collision case.  Removing it, the complete
stratum is
\begin{equation*}
 0<m<\sqrt2-1,
 \ m\ne\frac13,
 \
 (a_1,a_0,a_{-1})=(s_1,s_0,0),
\ s_1s_0\ne0.
\end{equation*}
Let $(\varepsilon_1,\varepsilon_0,\varepsilon_{-1})=(0,1,1)$.  Now
$
 2\eta_0+\eta_{-1}=\kappa,
 \
 -\eta_{-1}=\kappa m.
$
Hence
\begin{equation}
 \eta_{-1}=-\kappa m,
 \
 \eta_0=\frac\kappa2(1+m).
 \label{eq:Cminus-first-rates}
\end{equation}
The first rate is positive only if $m<0$.  Furthermore,
\begin{align*}
 \eta_1^2=2\eta_0^2-\eta_{-1}^2=\frac{\kappa^2}{2}(1+m)^2-\kappa^2m^2=\frac{\kappa^2}{2}(1+2m-m^2),
\end{align*}
and therefore
\begin{equation}
 \eta_1=\frac\kappa{\sqrt2}\sqrt{1+2m-m^2}.
 \label{eq:Cminus-last-rate}
\end{equation}
The inequality $1+2m-m^2>0$ is equivalent to
$m^2-2m-1<0$.  Its roots are $1-\sqrt2$ and $1+\sqrt2$, so intersection with
$m<0$ gives
\begin{equation}
 1-\sqrt2<m<0.
 \label{eq:Cminus-basic-range}
\end{equation}
In this interval $\eta_0>0$ as well.  Next,
\begin{align*}
 \mu
 &=-\eta_0^2
 =-\frac{\kappa^2}{4}(1+m)^2
 =-\frac{c^2}{16}(N+M)^2,\\\nonumber
 \lambda
 &=\eta_{-1}^2-\eta_0^2
 =\kappa^2m^2-\frac{\kappa^2}{4}(1+m)^2=\frac{\kappa^2}{4}(3m^2-2m-1)
 =-\frac{\kappa^2}{4}(1-m)(1+3m)\notag\\\nonumber
 &=-\frac{c^2}{16}(N-M)(N+3M).
\end{align*}
Within~\eqref{eq:Cminus-basic-range}, $\lambda=0$ exactly when $m=-1/3$.
The complete noncollision stratum is therefore
\begin{equation*}
 1-\sqrt2<m<0,
 \ m\ne-\frac13,
 \
 (a_1,a_0,a_{-1})=(0,s_0,s_{-1}),
 \quad s_0s_{-1}\ne0.
\end{equation*}
Assume $(\varepsilon_1,\varepsilon_0,\varepsilon_{-1})=(1,1,1)$.  The first two algebraic
equations are
\begin{equation*}
 \eta_1+2\eta_0+\eta_{-1}=\kappa,
\
 \eta_1-\eta_{-1}=\kappa m.
\end{equation*}
From the second equation,
$
 \eta_{-1}=\eta_1-\kappa m.
$
Substitution into the first gives
$
 2\eta_1+2\eta_0-\kappa m=\kappa,
$
so
\begin{equation}
 \eta_1=\frac\kappa2(1+m)-\eta_0,
 \
 \eta_{-1}=\frac\kappa2(1-m)-\eta_0.
 \label{eq:D-outers-in-middle}
\end{equation}
Let
$
 A_+:=\frac\kappa2(1+m),
 \
 A_-:=\frac\kappa2(1-m).
$
Then~\eqref{eq:algebraic-mass} becomes
\begin{align*}
 (A_+-\eta_0)^2+(A_--\eta_0)^2&=2\eta_0^2,\ A_+^2+A_-^2-2(A_++A_-)\eta_0+2\eta_0^2=2\eta_0^2.
\end{align*}
Because $A_++A_-=\kappa>0$, cancellation and division by $2\kappa$ give the
unique value
\begin{align}
 \eta_0=\frac{A_+^2+A_-^2}{2\kappa}=\frac{\kappa^2\bigl[(1+m)^2+(1-m)^2\bigr]/4}{2\kappa}
 =\frac\kappa4(1+m^2).
 \label{eq:D-middle}
\end{align}
Substitution into~\eqref{eq:D-outers-in-middle} yields
\begin{align}\label{eq:D-rate-one}
 \eta_1=\frac\kappa4(1+2m-m^2),\
 \eta_{-1}=\frac\kappa4(1-2m-m^2).
\end{align}
The middle rate in~\eqref{eq:D-middle} is always positive.  For the first
outer rate,
\[
 1+2m-m^2>0
 \ \Longleftrightarrow\
 1-\sqrt2<m<1+\sqrt2.
\]
For the other outer rate,
$
 1-2m-m^2>0
 \ \Longleftrightarrow\
 -1-\sqrt2<m<\sqrt2-1.
$
Their intersection is
\begin{equation*}
 1-\sqrt2<m<\sqrt2-1,
 \ \text{equivalently}\
 |m|<\sqrt2-1.
\end{equation*}
Also,
$
 \eta_1-\eta_{-1}=\kappa m,
$
so the noncollision condition is $m\ne0$.  Therefore the exact existence range
is
$
 0<|m|<\sqrt2-1.
$
Finally, $\mu
 =-\eta_0^2
 =-\frac{\kappa^2}{16}(1+m^2)^2
 =-\frac{c^2(N^2+M^2)^2}{64N^2},$
\begin{align*}
 \lambda
 &=\frac{(\eta_{-1}-\eta_1)(\eta_{-1}+\eta_1)}2=\frac{(-\kappa m)\,[\kappa(1-m^2)/2]}2
 =-\frac{\kappa^2m(1-m^2)}4
 =-\frac{c^2M(N^2-M^2)}{16N}.
\end{align*}
All amplitudes on this stratum are arbitrary nonzero real numbers:
\begin{equation*}
 (a_1,a_0,a_{-1})=(s_1,s_0,s_{-1}),
 \ s_1s_0s_{-1}\ne0.
\end{equation*}
\begin{proof}[Proof of Theorem \ref{The2}]
For family~\textup{(i)}, Theorem~\ref{thm:lambda-zero} gives
\[
 U_A=w_s\,vv^T,\
 w_s(x)=\sigma\frac{\kappa}{\sqrt c}
 \operatorname{sech}\bigl(\kappa(x-x_0)\bigr),
\]
with
\[
 v_1^2=\frac{1+m}{2},\
 v_2^2=\frac{1-m}{2},\
 2v_1v_2=\varepsilon\sqrt{1-m^2}.
\]
Since \(m=M/N\) and \(\kappa=cN/2\),
\[
 \frac{\kappa}{\sqrt c}\frac{1+m}{2}
 =\frac{\sqrt c}{4}(N+M),\
 \frac{\kappa}{\sqrt c}\frac{\varepsilon}{\sqrt2}
 \sqrt{1-m^2}
 =\frac{\varepsilon\sqrt c}{2\sqrt2}\sqrt{N^2-M^2},
\]
and the analogous calculation gives
\(\sqrt c(N-M)/4\) for the last component.  These are exactly the three
lines of Case (i).

For family~\textup{(ii)}, diagonalize the invertible symmetric amplitude
matrix as
\[
 A=O\operatorname{diag}(\alpha_1,\alpha_2)O^T.
\]
Every nonzero \(\alpha_j\) can be written uniquely in terms of its sign
\(\sigma_j\) and center \(x_j\) through
\[
 x_j=\frac1\eta\log\frac{2\eta}{\sqrt c\,|\alpha_j|}.
\]
Equations~\eqref{eq:w-alpha-sech} and
\eqref{eq:UA-scalar-decomp} then give
$
 U_A=O\operatorname{diag}(w_1,w_2)O^T.
$
Writing \(O\) as a rotation, with a possible reflection absorbed into the
signs and an exchange of the two labels completes the proof of Case (ii).

In families~\textup{(iii)}--\textup{(vi)}, the active physical equations
decouple into scalar equations.  An outer component with decay rate \(\eta\)
satisfies
\[
 y''-\eta^2y+2cy^3=0,
\]
whose nonzero \(H^1\) solutions are
\[
 y(x)=\sigma\frac{\eta}{\sqrt c}
 \operatorname{sech}\bigl(\eta(x-x_*)\bigr).
\]
An isolated middle component satisfies
\[
 y''-\eta^2y+cy^3=0,
\]
whose nonzero \(H^1\) solutions are
\[
 y(x)=\sigma\frac{\sqrt2\,\eta}{\sqrt c}
 \operatorname{sech}\bigl(\eta(x-x_*)\bigr).
\]
Substitution of the rates already derived in above completes the proof of case \textup{(iii)}--\textup{(vi)}. The formulas
for their multipliers are the corresponding formulas in those sections after
using \(m=M/N\) and \(\kappa=cN/2\).

It remains to justify the displayed determinant profiles for
families~\textup{(vii)}--\textup{(ix)}.  Starting from
\eqref{eq:det-solution} and using
\[
 \mathcal D_a^{-1}
 =F_a^{-1}\operatorname{adj}(\mathcal D_a)
\]
gives~\eqref{N3} entry by entry.
Multiplying the sparse matrices \(\mathcal A_a\) and \(C\) gives
\eqref{T2}; thus these are genuine finite algebraic
expressions, not implicit definitions through the differential equation.
The rates in Case (viii)-Case (ix) are obtained from
\eqref{eq:Cplus-first-rates}--\eqref{eq:Cplus-last-rate},
\eqref{eq:Cminus-first-rates}--\eqref{eq:Cminus-last-rate}, and
\eqref{eq:D-middle}--\eqref{eq:D-rate-one} by the same substitutions
\(m=M/N\) and \(\kappa=cN/2\).  The stated ranges are exactly the positivity
ranges already proved there, with the collision values removed.

Finally, Proposition~\ref{prop:mass-formula} verifies the two constraints for
every noncollision determinant profile, while
\eqref{eq:J-lambda-zero} verifies them for the two collision families.
Standing \eqref{eq9.4}, together with the exhaustive
rank and support analysis , proves that no additional
profiles exist.
\end{proof}
\section*{Acknowledgments}
X. Luo is supported by the National Natural Science Foundation of China (No. 11901147). Juncheng Wei was supported by National R and D Program of China (Grant
No. 2022YFA1005602), and Hong Kong General Research Fund "New frontiers in singular
limits of nonlinear partial differential equation". The
authors used OpenAI models as assistive tools in preparing this manuscript. Mathematical
arguments, proofs, and computations in the manuscript were verified and finalized by the
authors.

\end{document}